\newcommand{\FF}{{\mathbb{F}}}
\newcommand{\fA}{{\mathfrak{A}}}
\newcommand{\fS}{{\mathfrak{S}}}
\newcommand{\bC}{{\mathbf{C}}}
\newcommand{\bG}{{\mathbf{G}}}
\newcommand{\bL}{{\mathbf{L}}}
\newcommand{\bN}{{\mathbf{N}}}
\newcommand{\bT}{{\mathbf{T}}}
\newcommand{\bZ}{{\mathbf{Z}}}
\newcommand{\cE}{{\mathcal{E}}}
\newcommand{\cF}{{\mathcal{F}}}
\newcommand{\Aut}{{\operatorname{Aut}}}
\newcommand{\IBr}{{\operatorname{IBr}}}
\newcommand{\Inn}{{\operatorname{Inn}}}
\newcommand{\Irr}{{\operatorname{Irr}}}
\newcommand{\Out}{{\operatorname{Out}}}
\newcommand{\Syl}{{\operatorname{Syl}}}
\newcommand{\Ht}{{\operatorname{ht}}}
\newcommand{\GL}{{\operatorname{GL}}}
\newcommand{\PGL}{{\operatorname{PGL}}}
\newcommand{\PSL}{{\operatorname{L}}}
\newcommand{\SL}{{\operatorname{SL}}}
\newcommand{\SU}{{\operatorname{SU}}}
\newcommand{\PSU}{{\operatorname{U}}}
\newcommand{\Sp}{{\operatorname{Sp}}}
\newcommand\RtTtG{{\operatorname{R}_\tbT^\tbG}}
\newcommand{\tbG}{{\tilde\bG}}
\newcommand{\tbT}{{\tilde\bT}}
\newcommand{\tG}{{\tilde G}}
\newcommand{\tT}{{\tilde T}}
\newcommand{\tw}[1]{{}^#1\!}
\let\vhi=\varphi
\let\la=\lambda
\def\nor{\triangleleft\,}
\def\cent#1#2{{\bf C}_{#1}(#2)}
\def\oh#1#2{{\bf O}_{#1}(#2)}
\def\sbs{\subseteq}
\newtheorem{thm}{Theorem}[section]
\newtheorem{lem}[thm]{Lemma}
\newtheorem{cor}[thm]{Corollary}
\newtheorem{prop}[thm]{Proposition}
\newtheorem*{conjA}{Conjecture A}
\newtheorem*{conjB}{Conjecture B}
\newtheorem*{thmC}{Theorem C}
\theoremstyle{definition}
\newtheorem{exmp}[thm]{Example}
\theoremstyle{remark}
\newtheorem{rem}[thm]{Remark}
\begin{document}

\title[Defects and decomposition numbers]{On defects of characters and decomposition numbers}

\date{\today}

\author{Gunter Malle}
\address{FB Mathematik, TU Kaiserslautern, Postfach 3049,
        67653 Kaisers\-lautern, Germany.}
\email{malle@mathematik.uni-kl.de}
\author{Gabriel Navarro}
\address{Department of Mathematics, Universitat de Val\`encia, 46100 Burjassot,
        Spain.}
\email{gabriel@uv.es}
\author{Benjamin Sambale}
\address{FB Mathematik, TU Kaiserslautern, Postfach 3049,
        67653 Kaisers\-lautern, Germany.}
\email{sambale@mathematik.uni-kl.de}

\begin{abstract}
We propose upper bounds for the number of modular constituents of the
restriction modulo $p$ of a complex irreducible character of a finite group,
and for its decomposition numbers, in certain cases.
\end{abstract}

\thanks{The first author gratefully acknowledges financial support
by ERC Advanced Grant 291512. The second author is partially supported by the
Spanish Ministerio de Educaci\'on y Ciencia Proyectos  MTM2016-76196-P  and
Prometeo II/Generalitat Valenciana. The third author thanks the German Research
Foundation SA 2864/1-1 and Daimler Benz Foundation 32-08/13.
This work begun when the first and second
authors were visiting the CIB in Lausanne. We would like to thank the institute
for its hospitality.}

\keywords{decomposition numbers, defect of characters, heights of characters}

\subjclass[2010]{20C20,20C33}

\maketitle

\section{Introduction}   \label{sec:intro}
Let $G$ be a finite group and let $p$ be a prime. In his fundamental paper
\cite{Br},
Richard Brauer studied the irreducible complex characters $\chi \in \Irr(G)$
such that $\chi(1)_p=|G|_p/p$, where here $n_p$ denotes the largest power
of $p$ dividing the integer $n$. This was the birth of what later became the
cyclic defect theory, developed by E.~C.~Dade \cite{Dade}, building upon work
of J.~A.~Green  and J.~G.~Thompson on vertices and sources. Of
course, Brauer and Nesbitt had studied before the \emph{defect zero}
characters (those $\chi \in \Irr(G)$ with $\chi(1)_p=|G|_p$) proving that
they lift irreducible modular characters in characteristic $p$.
A constant in Brauer's work was to analyse the decomposition
of the complex irreducible characters $\chi$ into modular characters:
$$\chi^0=\sum_{\vhi \in \IBr(G)} d_{\chi \vhi} \vhi ,$$
where here $\chi^0$ is the restriction of $\chi$ to the elements of $G$ of
order prime to $p$, and where we have chosen a set $\IBr(G)$ of
irreducible $p$-Brauer characters of $G$. To better understand the
\emph{decomposition numbers} $d_{\chi \vhi}$ remains one of the challenges
in Representation Theory.
\medskip

If $\chi \in \Irr(G)$ let us write
$\IBr(\chi^0)=\{\vhi \in \IBr(G) \mid d_{\chi\vhi}\ne 0\}$, and
recall that the \emph{defect} of $\chi$ is the integer $d_\chi$ with
$$p^{d_\chi}\chi(1)_p=|G|_p\, .$$

For $\chi \in \Irr(G)$ of defect one Brauer proved in \cite{Br} that all
decomposition numbers $d_{\chi \vhi}$ are less than or equal to 1, and
implicitly, that there are less than $p$ characters $\vhi \in \IBr(G)$
occurring with multiplicity $d_{\chi \vhi} \ne 0$.

\medskip
In order to gain insight into  decomposition numbers in general, it is natural
from this perspective to next study characters of defect two. This step looks
innocent, but it deepens things in such a way that at present we can only guess
what might be happening in general:

\begin{conjA}   \label{conj:main}
 Let $G$ be a finite group, $p$ a prime. Let $\chi\in\Irr(G)$ with
 $|G|_p=p^2\cdot\chi(1)_p$. Then:
 \begin{enumerate}
  \item[\rm(1)] $|\IBr(\chi^0)|\le p^2-1$ and
  \item[\rm(2)] $d_{\chi\vhi}\le p$ for all $\vhi\in\IBr(G)$.
 \end{enumerate}
\end{conjA}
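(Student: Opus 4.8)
The plan is to fix the $p$-block $B$ of $G$ containing $\chi$ and to play two features off against each other: the coarse fact that $\chi$ lies in $B$, and the finer fact that $\chi$ has defect exactly $2$. Since decomposition numbers respect blocks, $\IBr(\chi^0)\subseteq\IBr(B)$, so that $|\IBr(\chi^0)|\le l(B)$, while each entry obeys $d_{\chi\vhi}^2\le c_{\vhi\vhi}$, the relevant diagonal Cartan invariant of $B$. Conjecture~A is the case $d_\chi=2$ of the pattern $|\IBr(\chi^0)|\le p^{d_\chi}-1$ and $d_{\chi\vhi}\le p^{d_\chi-1}$ interpolating the classical extremes quoted above: defect $0$ (Brauer--Nesbitt, where $\chi^0\in\IBr(G)$) and defect $1$ (Brauer).

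First I would dispose of the case where $\chi$ has height $0$, so that $B$ has defect $2$ with defect group $D$ of order $p^2$. Part~(2) is then essentially automatic: the diagonal Cartan invariants of a block of defect $d$ are bounded by $p^d$, whence $d_{\chi\vhi}\le\sqrt{c_{\vhi\vhi}}\le\sqrt{p^2}=p$. For part~(1) one uses that a block of positive defect satisfies $k(B)>l(B)$, so that $|\IBr(\chi^0)|\le l(B)\le k(B)-1\le p^2-1$, the last step being Brauer's $k(B)$-conjecture $k(B)\le|D|$. As $D$ is abelian of order $p^2$, it is $C_{p^2}$ or $C_p\times C_p$: for the former Dade's cyclic theory produces a Brauer tree and forces $d_{\chi\vhi}\in\{0,1\}$ with at most $p-1$ constituents, while for the latter one descends to the Brauer correspondent in $\bN_{G}(D)$ and bounds $l(B)$ and the decomposition numbers through its inertial quotient $E\le\GL_2(p)$, a $p'$-group.

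For $p$-solvable $G$ this scheme can be made unconditional and global. Here the $k(B)$-conjecture is a theorem (Gluck--Magaard--Riese--Schmid), the Fong--Swan theorem lets every $\vhi\in\IBr(G)$ be lifted to $\Irr(G)$ so that decomposition numbers are read off ordinary characters, and the Fong reductions together with induction on $|G|$ reduce a general defect-$2$ character to precisely the central-$p'$, defect-$p^2$ situation handled above.

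The genuine obstacle is a character $\chi$ of defect $2$ but positive height, lying in a block $B$ of defect $d(B)>2$. Now the block-level estimates degrade to $d_{\chi\vhi}\le p^{d(B)/2}$ and $|\IBr(\chi^0)|\le p^{d(B)}-1$, which are useless, so one must exploit the defect of $\chi$ itself --- exactly the difficulty Brauer already faced for defect-$1$ characters sitting inside larger blocks. The natural device is induction on $|G|$ through the $p$-local subgroups $\cent{G}{u}$ for $p$-elements $1\ne u$: by Brauer's second main theorem the generalized decomposition numbers $d^u_{\chi\vhi}$, for $\vhi\in\IBr(\cent{G}{u})$, are built from ordinary decomposition numbers of the smaller group $\cent{G}{u}$, where the constituents involved have strictly smaller defect and the inductive hypothesis (ultimately the defect-$0$ and defect-$1$ theorems) bites. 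The hard part is that assembling the contributions over all $u$ via the orthogonality relations must be made to control the ordinary column $u=1$, which is the very quantity being bounded; and a proof valid for all finite groups would in addition require the $k(B)$-conjecture in general and a Clifford-theoretic reduction of Conjecture~A to quasi-simple groups, whose decomposition matrices --- above all for groups of Lie type in defining and non-defining characteristic --- remain largely unknown. This is why the bounds can be secured only in certain cases rather than in complete generality.
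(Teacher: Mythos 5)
This statement is Conjecture~A: the paper itself has no proof of it in general, only proofs in special cases (Theorem~\ref{thm:p-solv}/Corollary~\ref{cor:p-solv} for $p$-solvable groups, Theorem~\ref{thm:p2} for $p=2$ under Alperin--McKay, Theorems~\ref{thm:An} and~\ref{thm:defchar}, Propositions~\ref{prop:2.An}, \ref{prop:spor} and~\ref{prop:SLn}). So your honest concession at the end is appropriate; the problem is that the cases you claim to ``dispose of'' contain genuine gaps. In the height-zero case, your part~(2) rests on the assertion that diagonal Cartan invariants of a defect-$d$ block satisfy $c_{\vhi\vhi}\le p^d$; this is not a theorem (it is an open problem in general, even for blocks with abelian defect groups), so nothing here is ``essentially automatic''. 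Likewise, your descent for $D\cong C_p\times C_p$ to the Brauer correspondent in $\bN_G(D)$ does not bound $l(B)$ without Alperin's weight conjecture and says nothing at all about decomposition numbers of $B$ itself. Part~(1) via the $k(B)$-conjecture is explicitly conditional, which matches the paper's own remark in the introduction, but then the case is reduced to an open conjecture, not settled.

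The more serious error is the $p$-solvable paragraph, where you claim the Fong reductions bring a defect-$2$ character back ``to precisely the central-$p'$, defect-$p^2$ situation handled above''. They do not. The paper's actual reduction (Corollary~\ref{cor:p-solv}, via Fong--Reynolds and character triples) only achieves $N=\oh{p'}G\sbs\bZ(G)$ and $\cent GL\sbs LN$ with $L=\oh pG$ of order up to $p^3$; the resulting block can have defect~$4$ with $\chi$ of positive height --- e.g.\ the principal block of $p^{1+2}_+\rtimes\SL_2(p)$ contains characters of defect~$2$ (cf.\ Lemma~\ref{robinson} and Proposition~\ref{psolv}), and this is exactly the hard case. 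The whole content of Theorem~\ref{thm:p-solv} lies in handling $L$ extraspecial of order $p^3$ with $\SL_2(p)\sbs G/LN\sbs\GL_2(p)$: it needs Isaacs' theory of fully ramified sections producing the degree-$p$ character $\Psi=\alpha+\beta$ with values $\pm1$ on $p$-regular elements (Lemmas~\ref{aux2} and~\ref{aux3}), the multiplicity bound $[\psi,\theta\delta]\le p/2$ over the classified $p'$-subgroups of $\SL_2(p)$, triviality of Schur multipliers between $\SL_2(p)$ and $\GL_2(p)$, Brauer trees of $\SL_2(p)$, the $k(GV)$-theorem for part~(1), and the elementary estimate $[\chi_H,\theta]^2\le|G:H|$ of Lemma~\ref{multi} for a $p$-complement $H$. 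Fong--Swan plus $k(GV)$, which is all you invoke, yields none of these bounds. Your diagnosis of the general obstruction (positive-height characters in blocks of large defect, and the difficulty of assembling Brauer's second main theorem data at the column $u=1$) is sound, but note that the paper's unconditional results go through entirely different tools: Erdmann's classification of tame blocks for $p=2$, Scopes' defect-two decomposition matrices for $\fS_n$, and Lusztig's Jordan decomposition in defining characteristic.
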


It is remarkable that, as we shall prove  below (Theorem \ref{thm:p2}),
Conjecture A follows from the Alperin--McKay conjecture together with the work
of K.~Erdmann, but only for the prime $p=2$. We do prove below Conjecture A for
$p$-solvable groups (see Theorem~\ref{thm:p-solv}) and for certain classes of
quasi-simple and almost simple groups (see Theorem~\ref{thm:An},
Propositions~\ref{prop:2.An} and~\ref{prop:spor} and
Theorem~\ref{thm:defchar}). On the other hand, Conjecture~A is wide open for
example for groups of Lie type in non-defining characteristic
(see Example~\ref{exmp:GLn}, but also Proposition~\ref{prop:SLn}).
As far as we are aware, no bounds for the number $|\IBr(\chi^0)|$ have been
proposed before, and therefore, we move into unexplored territory here.
\medskip

If Conjecture A is true then both of our bounds are sharp. Of course, the
irreducible characters of degree $p$ in a non-abelian group of order $p^3$ have
decomposition numbers equal to $p$. Also, the irreducible character $\chi$ of
degree $p^2-1$ in the semidirect product of $C_p \times C_p$ with a cyclic
group of order $p^2-1$ satisfies $|\IBr(\chi^0)|=p^2-1$.
\medskip

In view of Brauer's analysis of characters of defect one and our Conjecture A,
it is tempting to guess that whenever $\chi(1)_p=|G|_p/p^3$, then
$d_{\chi \vhi} \le p^2$ and $|\IBr(\chi^0)|\le p^3-1$. While we are not
aware of any $p$-solvable counter-examples, still this is not true in
general: For $p=2$, $G=3.J_3$ has a character $\chi$ such that
$\chi(1)_p=|G|_p/p^3$, having 8 irreducible Brauer constituents. For $p=3$,
$G=Co_3$ has an irreducible character such that $\chi(1)_p=|G|_p/p^3$ with 13
occurring as a decomposition number. Perhaps other bounds are possible.
\medskip

While studying Conjecture A, we came across a remarkable inequality, to which
we have not yet found a counterexample.

\begin{conjB}   \label{conj:lb}
 Let $G$ be a finite group, $p$ a prime, and $\chi\in\Irr(G)$. Then
 $$|\IBr(\chi^0)|\cdot\chi(1)_p\le |G|_p.$$
\end{conjB}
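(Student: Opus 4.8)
The plan is to first recast the inequality in terms of defects. Since $p^{d_\chi}\chi(1)_p=|G|_p$, Conjecture~B is equivalent to the assertion
\[
  |\IBr(\chi^0)|\le p^{d_\chi}.
\]
Now all $\vhi$ with $d_{\chi\vhi}\ne0$ lie in the $p$-block $B$ of $\chi$, so $|\IBr(\chi^0)|\le l(B)$, the number of irreducible Brauer characters in $B$. Writing $d$ for the defect of $B$ and $h=d-d_\chi\ge 0$ for the height of $\chi$, the first case to isolate is $h=0$: here $d_\chi=d$, and the claim reduces to $l(B)\le|D|$ for a defect group $D$ of $B$. This is a consequence of Brauer's $k(B)$-conjecture $k(B)\le|D|$ (as $l(B)\le k(B)$), which is known for $p$-solvable groups through the solution of the $k(GV)$-problem and holds in many further cases. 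Thus the genuinely open part of the conjecture is concentrated on the characters of \emph{positive} height, where $p^{d_\chi}=p^{d-h}$ is strictly smaller than $|D|$ and the crude bound $l(B)$ no longer suffices.

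For the general attack I would follow the reduction-to-simple-groups strategy underlying the proofs of Conjecture~A for the families treated above. First I would set up a Clifford-theoretic reduction: passing to $\oh{p'}{G}$ and to inertia groups via Fong--Reynolds, one tries to express both $|\IBr(\chi^0)|$ and $d_\chi$ through the corresponding data of a character of a suitable central extension of a simple group lying under $\chi$. Granting such a reduction, it would then remain to verify $|\IBr(\psi^0)|\le p^{d_\psi}$ for $\psi\in\Irr(H)$ with $H$ quasi-simple, which by the classification splits into the alternating, sporadic, and Lie-type cases; for the first two one can hope to argue either uniformly or by inspection of the known decomposition matrices, much as in Proposition~\ref{prop:2.An} and Proposition~\ref{prop:spor}.

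I expect two steps to be the principal obstacles. The first is that, unlike the McKay-- or Alperin--McKay-type invariants, neither $|\IBr(\chi^0)|$ nor the passage $\chi\mapsto\chi^0$ transports cleanly along the Clifford correspondence and through central extensions, so even formulating a self-contained reduction theorem is delicate; in particular the positive-height case is not controlled by $l(B)\le|D|$ and would require finer information about the shape of a single row of the decomposition matrix rather than merely its length. The second, and most serious, is the case of groups of Lie type in non-defining characteristic: as noted in Example~\ref{exmp:GLn}, already Conjecture~A is wide open there, and the relevant decomposition numbers are governed by deep and largely conjectural representation theory, so a complete verification is beyond current reach. A realistic intermediate goal is therefore to establish the equivalent bound $|\IBr(\chi^0)|\le p^{d_\chi}$ for all $p$-solvable groups, by combining the height-zero argument above with Fong's reductions, and to confirm it for those quasi-simple groups whose decomposition matrices are explicitly known.
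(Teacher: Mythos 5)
You have not produced a proof, and indeed none exists to compare against: the statement is Conjecture~B, which the paper leaves open in general and only verifies for particular families. Your proposal is honest about this, and its solid content matches the paper's own framing in the introduction: the reformulation as $|\IBr(\chi^0)|\le p^{d_\chi}$, the bound $|\IBr(\chi^0)|\le l(B)$, and the observation that for height-zero characters the claim follows from Brauer's $k(B)$-conjecture $k(B)\le|D|$ (hence holds unconditionally for $p$-solvable groups via the $k(GV)$-theorem) are all stated there, as is the link to the Malle--Robinson conjecture via the inequality $s(D)+h\le d$.

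Where your plan diverges from the paper, two cautions are in order. First, the reduction-to-quasi-simple-groups machinery you sketch does not appear in the paper at all, and the obstacle you yourself name is real: the paper's Clifford-theoretic reductions (Fong--Reynolds, character triples) are deployed only for Conjecture~A in $p$-solvable groups, where one controls a single character of defect~$2$; there is no known reduction theorem transporting the pair $\bigl(|\IBr(\chi^0)|,\,d_\chi\bigr)$ through central extensions. Second, your ``realistic intermediate goal'' of settling all $p$-solvable groups is \emph{not} achieved by combining the height-zero case with Fong's reductions --- the authors explicitly state they do not know what happens for solvable groups, so the positive-height $p$-solvable case should be regarded as open, not as low-hanging fruit. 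The paper's actual partial results proceed quite differently, family by family: for $\fS_n$ and $\fA_n$ (Proposition~\ref{prop:lB-Sn}) the hook formula shows a character of defect~$w$ lies in a block of weight at most~$w$, and such a block satisfies $l(B)<p^w$; for quasi-simple groups of Lie type in defining characteristic (Corollary~\ref{cor:defchar-lb}) one combines Lusztig's Jordan decomposition with root-system estimates ($N(\Phi)-N(\Psi)\ge n$) to get $|G|_p/\chi(1)_p\ge q^n$, while $l(B)<q^n$ since the $q^n$ irreducible Brauer characters are parametrised by $q$-restricted weights and include the defect-zero Steinberg character. These direct counting arguments, rather than a reduction theorem, are what currently carry the conjecture in the known cases; your proposal correctly identifies non-defining characteristic as the principal barrier.
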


For characters of defect~1, this is the cited result of Brauer, and for
defect~2 it follows from Conjecture~A(1).
\medskip

We shall prove below that Conjecture B is satisfied in symmetric groups and
for simple groups of Lie type in defining characteristic (see
Proposition~\ref{prop:lB-Sn} and Corollary~\ref{cor:defchar-lb}). Our bound
seems exactly the right bound for these classes of groups, and somehow this
makes us  think that Conjecture~B points in the right
direction. On the other hand, we do not know what is happening in other
important classes of groups, like solvable groups or groups of Lie type in
non-defining characteristic, for instance. Whether
Conjecture B is true or false, we believe that the relation between
$|\IBr(\chi^0)|$ and $|G|_p/\chi(1)_p$ is worth exploring.
Of course, if $|G|_p/\chi(1)_p=1$, then $|\IBr(\chi^0)|=1$ by the
Brauer--Nesbitt theorem.

Observe that the stronger inequality $l(B) \cdot\chi(1)_p\le |G|_p$
is not always true. For example, let $G$ be the central product of $n$ copies
of $\SL_2(3)$ where the centres of order $2$ are identified. Then the principal
$2$-block $B$ of $G$ has a normal defect group and satisfies $l(B)=3^n$, but
there is an irreducible character $\chi\in\Irr(B)$ (deflated from the direct
product of $\SL_2(3)$'s) such that $|G|_2/\chi(1)_2=2^{n+1}$.
Other examples are the principal 2-block of $J_3$ or the principal 3-block
of $\SL_6(2)$.
\medskip

So far we have not mentioned blocks, heights, or defect groups. These are, of
course, related to Conjectures A and B.
If $\chi\in\Irr(G)$, then $\chi\in\Irr(B)$ for a unique $p$-block $B$ of $G$,
and $\chi(1)_p=p^{a-d +h}$, where $|G|_p=p^a$, $|D|=p^d$ is the order of a
defect group $D$ for $B$, $d$ is the defect of the block $B$, and $h\ge 0$ is
the height of $\chi$. Hence, the defect $d_\chi$ of $\chi$ is
$$d_\chi=d - h \, .$$
Brauer's famous $k(B)$-conjecture asserts that $k(B):=|\Irr(B)|\le |D|=p^d$.
Since obviously $|\IBr(\chi^0)|\le |\IBr(B)|=:l(B)<|\Irr(B)|$ (if $d>0$), it
follows that for characters of height zero both Conjectures A(1) and B are
implied by Brauer's $k(B)$-conjecture. (In particular, by the Kessar--Malle
solution of one implication of the Height Zero Conjecture \cite{KM13},
Conjectures A(1) and B follow from the $k(B)$-conjecture for characters in
blocks with abelian defect groups.)

A recent conjecture, formulated by Malle and Robinson in \cite{MR16}, is also
related to the present work. Malle and Robinson have proposed that
$l(B)\le p^{s(D)}$, where $s(D)$ is the so called $p$-sectional rank of the
group $D$. Hence Conjecture B follows from the Malle--Robinson conjecture for
those irreducible characters whose height $h$ is such that
\begin{equation}   \label{eq:height}
s(D) + h \le d.
\end{equation}

Finally, we come back to characters of defect 2 for small primes, but from the
perspective of  their relationship with their blocks and defect groups. Here
we prove:

\begin{thmC}
 \begin{enumerate}[\rm(a)]
  \item If $p=2$ then the Alperin--McKay conjecture implies Conjecture~A.
  \item If $p=3$ then Robinson's ordinary weight conjecture implies $l(B)\le10$
   for every block $B$ containing a character $\chi$ as in Conjecture A.
  \end{enumerate}
\end{thmC}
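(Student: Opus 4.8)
The plan is to argue block-by-block. Writing $\chi\in\Irr(B)$ for the unique block $B$ containing $\chi$, with defect group $D$ of order $p^d$ so that $\chi$ has height $h=d-2$, it suffices to bound $l(B)$ (since $\IBr(\chi^0)\subseteq\IBr(B)$ gives $|\IBr(\chi^0)|\le l(B)$) together with the individual entries $d_{\chi\vhi}$ of the column of the decomposition matrix attached to $\chi$. For part~(a) I would first settle the small-defect cases directly. If $h=0$ then $|D|=4$, so $D$ is cyclic or a Klein four group: cyclic blocks have $l(B)=1$ with decomposition numbers in $\{0,1\}$, and for $D\cong C_2\times C_2$ the classification of Erdmann and Linckelmann gives $l(B)\in\{1,3\}$ with all decomposition numbers at most $1$. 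If $h\ge1$ then $D$ is non-abelian by the Kessar--Malle theorem \cite{KM13}, so for $d=3$ we have $D\in\{D_8,Q_8\}$, both of tame type. This already displays the strategy: once $D$ is cyclic, dihedral, semidihedral or generalized quaternion, Erdmann's classification of the decomposition matrices of tame blocks lists every possibility, and in each case $l(B)\le 3=p^2-1$ and every decomposition number is at most $2=p$, which is exactly Conjecture~A.

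The substantive work is therefore to reduce an arbitrary block carrying a defect-$2$ character to this tame list, and this is where the Alperin--McKay conjecture enters. Assuming it, the height-zero count satisfies $k_0(B)=k_0(b)$ for the Brauer correspondent $b$ in $N_G(D)$, where $D$ is normal; for a block with normal defect group K\"ulshammer's structure theorem describes $b$ up to Morita equivalence through the inertial quotient $E$ and a twisted group algebra of $D\rtimes E$, so that $k_0(b)$ and $l(b)$ are governed by the local data $(D,E)$. Feeding $k_0(B)=k_0(b)$ into the requirement that $B$ possess a character of defect exactly $2$ should constrain $(D,E)$ enough to force $D$ into the tame list above, after which Erdmann's tables finish the argument. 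The feature that makes $p=2$ special is precisely that for this prime the blocks of finite and tame representation type are exactly those whose decomposition matrices Erdmann computed, so the local invariants completely determine the column of the decomposition matrix indexed by $\chi$.

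For part~(b) the prime $p=3$ admits no such complete classification, so I would extract only the bound on $l(B)$, using Robinson's ordinary weight conjecture as the local input. The ordinary weight conjecture expresses the number $k_m(B)$ of irreducible characters of defect $m$ in $B$ as a sum over $G$-classes of $p$-chains (equivalently, over radical $p$-subgroups $Q$) of contributions counting projective characters of prescribed defect in the local sections $N_G(Q)/Q$; in particular it refines Alperin's weight conjecture, which computes $l(B)$ as the total number of $B$-weights. Assuming the weight conjecture, I would analyse for $p=3$ which radical subgroups $Q$ can contribute a weight to a block that simultaneously carries a character of defect~$2$: the relevant inertial quotients are the $3'$-subgroups of $\GL_2(3)$ arising from a defect group $C_3\times C_3$ together with the cyclic possibilities coming from $C_9$, and the compatibility between a nonzero value of the defect-$2$ count and the weight count of $l(B)$ restricts these configurations. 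A maximization over the admissible local configurations then yields $l(B)\le 10$.

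The main obstacle in both parts is identical: the local--global conjectures control \emph{character counts} (the values $k_0$ and $k_m$), whereas Conjecture~A is a statement about \emph{decomposition numbers}, so the counting information must be converted into control of the columns of the decomposition matrix. For $p=2$ this conversion is exactly what Erdmann's explicit decomposition matrices supply, which is why the argument can be pushed through to the full conjecture; for $p=3$ no comparable dictionary exists, so one can only bound $l(B)$ and must accept the weaker conclusion $l(B)\le 10$ (note that $10>p^2-1=8$). A secondary and, I expect, delicate difficulty is the positive-height range $d\ge4$ in part~(a): here the task is to show that a defect-$2$ character simply cannot inhabit a block whose defect group lies outside the tame list, that is, that the constraints coming from Alperin--McKay genuinely force $D$ to be cyclic, dihedral, semidihedral or generalized quaternion.
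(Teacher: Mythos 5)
Your high-level architecture for part~(a) --- use Alperin--McKay to localize, then read off $l(B)\le 3$ and $d_{\chi\vhi}\le 2$ from Erdmann's tame tables --- is the same as the paper's, and your small-defect discussion ($d\le 3$) is fine. But the step you defer as a ``secondary and delicate difficulty,'' namely forcing $D$ into the cyclic/dihedral/semidihedral/quaternion list when $d\ge 4$, is in fact the entire content of the paper's proof, and your sketch supplies no mechanism for it. The paper proceeds by first invoking an \emph{unconditional} result of Landrock: a $2$-block containing a character $\chi$ with $\chi(1)_2=|G|_2/4$ satisfies $k_0(B)=4$. It then proves a block-theoretic converse of Taussky's theorem (Lemma~\ref{lem}): assuming Alperin--McKay, $k_0(B)=4$ if and only if $D$ is dihedral (including Klein four), semidihedral, quaternion or cyclic of order~$4$. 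The proof of that lemma is where the real work sits: Alperin--McKay reduces to $D\unlhd G$, Reynolds allows one to take $D$ Sylow, K\"ulshammer's domination passes to a block $\overline{B}$ of $G/D'$ with $k(\overline{B})=k_0(B)=4$, and then one must show $|D/D'|=4$ by contradiction, using K\"ulshammer's theorem on blocks with $k(B)=4$ (giving $l(\overline{B})\ge 2$ and at most $3$ subsection classes), a subsection count forcing $\overline{D}$ elementary abelian, Huppert's classification of solvable $2$-transitive groups to place the inertial quotient $I$ inside $\operatorname{\Gamma L}_1(2^d)$, and triviality of the K\"ulshammer--Puig class to reach the contradiction $l(\overline{B})=k(I)>3$. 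Your phrase ``feeding $k_0(B)=k_0(b)$ into the requirement that $B$ possess a character of defect exactly $2$ should constrain $(D,E)$'' asserts exactly this conclusion without an argument; in particular you never isolate $k_0(B)=4$ as the invariant to exploit, and K\"ulshammer's normal-defect Morita structure theorem, which you propose to use, plays no role in the actual reduction.

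In part~(b) your list of local configurations is wrong, not merely incomplete: you restrict attention to defect groups $C_3\times C_3$ and $C_9$, i.e.\ to blocks of defect~$2$, whereas the hypothesis is a defect-$2$ \emph{character} in a block of arbitrary defect $d$ (the paper exhibits examples for every $d\ge 2$, e.g.\ principal blocks of $3.\PGL_3(q)$ and $\tw2F_4(q^2)$). The correct input is Robinson's consequence of the ordinary weight conjecture (Lemma~\ref{robinson}): either $|D|=p^2$ or $D$ has maximal class, and if $|D|\ge p^4$ then $D$ contains an $\cF$-radical, $\cF$-centric subgroup $Q\cong 3^{1+2}_+$ with $\SL_2(3)\le\Out_{\cF}(Q)\le\GL_2(3)$. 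The bound $l(B)\le 10$ is then obtained by running Alperin's weight conjecture over the $\cF$-radical, $\cF$-centric subgroups of maximal class $3$-groups: for $d\ge 5$ via the D\'{\i}az--Ruiz--Viruel classification of fusion systems on rank-$2$ $3$-groups (giving $l(B)\le 9$, attained by $\tw2F_4(q^2)$), for $d=3$ via the Ruiz--Viruel tables for $3^{1+2}_+$ (again $l(B)\le 9$), and for $d=4$ via a by-hand analysis of $D\cong C_3\wr C_3$, whose candidate radical centric subgroups $3^{1+2}_+$, $C_3\times C_3\times C_3$ and $D$ itself, with outer automorphism groups up to $\GL_2(3)$, $\fS_4\times C_2$ and $C_2\times C_2$, yield the maximum $10$, realized by the principal block of $\PSL_6(2)$. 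Your proposed maximization could never produce the value $10$: that number arises only in the wreath-product case $d=4$, which your configuration list omits entirely, and nothing in a defect-$2$-only analysis accounts for it.
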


If there is an upper bound for $l(B)$ in Theorem~C for arbitrary primes, we
have not been able to find it. In the situation of Theorem~C for $p$-solvable
groups, we shall prove below that either $|D|\le p^3$ or $|D|=p^4$ and
$p\le 3$, and the possible defect groups are classified. For non $p$-solvable
groups, however, $|D|$ is unbounded, as shown by  $\SL_2(q)$ with $q$ odd
and $p=2$.

\section{$p$-solvable groups}

We start with the proof of Conjecture A for $p$-solvable groups.
In fact, we prove something more general.
Our notation for complex characters follows \cite{Isa}, and for Brauer
characters \cite{N}. If $G$ is a finite group, $N \nor G$, and
$\theta \in \Irr(N)$, then $\Irr(G|\theta)$ is the set of complex
irreducible characters $\chi \in \Irr(G)$ such that the restriction $\chi_N$
contains $\theta$ as a constituent. Notice that if $\psi^G=\chi \in \Irr(G)$
for $\psi$ an irreducible character of the inertia group $T$ of $\theta$, then
$|G:T|_p\psi(1)_p=\chi(1)_p$, and therefore $|G|_p/\chi(1)_p=|T|_p/\psi(1)_p$.
Hence, $d_\chi=d_\psi$.

\medskip
First we collect some rather well-known results.

\begin{lem}   \label{aux}
 Let $p$ be a prime, and let $U$ be a subgroup of $\GL_2(p)$.
 \begin{enumerate}[\rm(a)]
  \item Assume that $U$ has order divisible by $p$. Then either $U$ has a
   normal Sylow $p$-subgroup or $\SL_2(p)\sbs U$.
  \item If $W \sbs \SL_2(p)\sbs U \sbs \GL_2(p)$, then $W$ and $U$ have
   trivial Schur multiplier.
  \item If $W \sbs \SL_2(p)$ is a $p'$-subgroup, then either $W$ is cyclic of
   order a divisor of $p-1$ or $p+1$, $W$ has a normal cyclic subgroup
   of index~2, or $W=\SL_2(3)$, ${\tt SmallGroup}(48,28)$ or $\SL_2(5)$.
  \item Suppose that $G=(C_p \times C_p): U$ in natural action, where
   $\SL_2(p) \sbs U \sbs \GL_2(p)$. If $\chi \in \Irr(G)$,
   and $p$ divides $\chi(1)$, then $\chi(1)=p$.
  \item Suppose that $L$ is an extra-special group of order $p^3$ and exponent
   $p$. Then we have that $\Aut(L)/\Inn(L) \cong \GL_2(p)$. If fact, if
   $Z=\bZ(L)$, $A=\Aut(L)$ and $I=\Inn(L)$, then
   $A=\cent{A} Z:\langle \sigma\rangle$, where $\sigma$ has order $p-1$,
   and $\cent AZ/I=\SL_2(p)$.
 \end{enumerate}
\end{lem}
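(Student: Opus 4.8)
The plan is to treat the five parts in a logical order, reducing the later, more substantial statements to the structural facts in the earlier ones together with standard information on $\SL_2(p)$ and its subgroups. For (a) I would count Sylow $p$-subgroups: since $|\GL_2(p)|_p=p$, a Sylow $p$-subgroup is generated by a transvection, and the $p+1$ transvection subgroups correspond to the $p+1$ Borel subgroups. If $U$ has order divisible by $p$ but no normal Sylow $p$-subgroup, then its number of Sylow $p$-subgroups is $\equiv 1\pmod p$ and exceeds $1$, hence is at least $p+1$; so $U$ contains all $p+1$ transvection subgroups, and as these generate $\SL_2(p)$ we get $\SL_2(p)\subseteq U$. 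For (c) I would invoke Dickson's classification of subgroups of $\PSL_2(p)$ (cyclic, dihedral, or $A_4$, $S_4$, $A_5$) and pull back along the central quotient $\SL_2(p)\to\PSL_2(p)$: cyclic images lift to cyclic subgroups of the tori of order dividing $p\pm1$, dihedral images lift to dicyclic groups (a normal cyclic subgroup of index $2$), and $A_4,S_4,A_5$ lift to $\SL_2(3)=2.A_4$, ${\tt SmallGroup}(48,28)=2.S_4$, and $\SL_2(5)=2.A_5$.

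Part (b) is then bookkeeping on Schur multipliers. For the overgroups $U$ with $\SL_2(p)\subseteq U\subseteq\GL_2(p)$ and for the subgroups $W\subseteq\SL_2(p)$, I would run through the possibilities from (c): cyclic and (generalized) quaternion groups have trivial multiplier, the quasisimple groups $\SL_2(3)$, $2.S_4$, $\SL_2(5)$ and $\SL_2(p)$ itself are their own universal covers, and the remaining $p$-local pieces $C_p\rtimes C$ are handled directly; the groups $U$ are treated likewise. I expect this to be routine once (c) is in hand.

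Part (d) is the technical heart, and I would prove it by Clifford theory relative to $N=C_p\times C_p\nor G$. Since $N$ is abelian and the action on $\Irr(N)\cong\FF_p^2$ is dual to the natural one, with $\SL_2(p)\subseteq U$ transitive on nonzero vectors, $U$ has exactly two orbits on $\Irr(N)$: the trivial character and the $p^2-1$ nontrivial ones. If $\chi$ lies over the trivial character it is inflated from $U$; using that the unique irreducible character of $\SL_2(p)$ of degree divisible by $p$ is the Steinberg character of degree $p$, and that Steinberg is invariant and extends across the cyclic quotient $U/\SL_2(p)$, I get $\chi(1)=p$ whenever $p\mid\chi(1)$. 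If $\chi$ lies over a nontrivial $\lambda$, then $\chi=\psi^G$ with $\chi(1)=(p^2-1)\psi(1)$ for $\psi\in\Irr(I\mid\lambda)$ and inertia group $I=N\rtimes(P\rtimes C)$, where $P\cong C_p$ is unipotent and $C$ is cyclic of $p'$-order; the key point is that $\lambda$ is trivial on the commutator line $[N,P]$, so $\lambda$ extends to the Heisenberg-type group $NP$, all characters of $NP$ over $\lambda$ are linear, and inducing up across the $p'$-group $C$ keeps $\psi(1)$ coprime to $p$, whence $p\nmid\chi(1)$. The main obstacle here is organizing the inertia-group analysis cleanly.

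For (e) I would use the symplectic geometry of $L$. The commutator induces a nondegenerate alternating form on $\overline L:=L/Z\cong\FF_p^2$ with values in $Z\cong\FF_p$. Each automorphism acts on $Z$ by a scalar $c\in\FF_p^\times$ and on $\overline L$ by some $M\in\GL_2(p)$, and compatibility with the form forces $\det M=c$; this yields homomorphisms $A\to\Aut(Z)=C_{p-1}$ and $A\to\GL(\overline L)$ whose common kernel is precisely $I=\Inn(L)$, since inner automorphisms act trivially on both $\overline L$ and $Z$. Surjectivity onto $\GL_2(p)$ is where exponent $p$ enters: for odd $p$ every $M\in\GL_2(p)$ lifts to an automorphism of $L$ (Winter's theorem), so $A/I\cong\GL_2(p)$ and $\cent AZ/I\cong\SL_2(p)$. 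Finally I would realize a generator of $C_{p-1}$ by an explicit diagonal automorphism $\sigma$ of order $p-1$ with $\langle\sigma\rangle\cap\cent AZ=1$, giving the split decomposition $A=\cent AZ:\langle\sigma\rangle$. I expect the lifting step, namely surjectivity via exponent $p$, to be the main obstacle, though it is classical.
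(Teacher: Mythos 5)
Most of your proposal is sound and, for parts (c), (d) and (e), essentially coincides with the paper's own proof. Part (c) is the same Dickson-plus-lifting argument; the paper is slightly more careful with the unique central involution of $\SL_2(p)$, which is what rules out complements (a dihedral or $\fA_4$-image cannot lift to an isomorphic copy of itself, since those groups have several involutions), and you should make that explicit. Part (d) is the same two-orbit Clifford analysis: the paper extends $\theta$ linearly to $V{:}T$ using that $T$ is metacyclic with normal Sylow $p$-subgroup, where you factor through the Heisenberg group $NP$, but the content is identical and both are correct. Part (e) matches the paper's kernel computation plus the appeal to Winter; your symplectic observation $\det M=c$ is a nice way to see that $C_A(Z)/I$ lands in $\SL_2(p)$, though for the kernel claim you only prove $\Inn(L)\subseteq\ker$, and you still need the paper's counting argument (at most $p^2$ choices for the images of two generators modulo $L'$, versus $|\Inn(L)|=p^2$) for the reverse inclusion. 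Your Sylow-counting proof of (a) is correct and replaces the paper's citation of Kurzweil--Stellmacher by a self-contained argument: $n_p(U)\equiv1\pmod p$ and $n_p(U)>1$ force $U$ to contain all $p+1$ transvection subgroups, which generate $\SL_2(p)$.

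The one genuine gap is in (b), for the intermediate overgroups $\SL_2(p)\subseteq U\subseteq\GL_2(p)$. Your proposed mechanism --- run through the list from (c) and quote trivial multipliers for cyclic and generalized quaternion groups and for the double covers --- does work for all subgroups $W\le\SL_2(p)$, including those of order divisible by $p$ that (c) does not list, since every subgroup of $\SL_2(p)$ has cyclic or generalized quaternion Sylow subgroups and hence trivial Schur multiplier (this is exactly the paper's remark). But it says nothing about a proper intermediate $U$: such a $U$ is not quasisimple (it has a nontrivial cyclic quotient of order dividing $p-1$), it does not occur in (c), and its Sylow $2$-subgroups are in general neither cyclic nor generalized quaternion (already $\GL_2(3)$ has semidihedral Sylow $2$-subgroups), so neither of your two mechanisms applies and ``treated likewise'' is not an argument. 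The paper closes precisely this case with a short stem-extension argument: if $S$ is a proper covering group of $U$ with $1\ne Z\le\bZ(S)\cap S'$ and $N\unlhd S$ is the preimage of $\SL_2(p)$, then for $p\ge5$, using that $\SL_2(p)$ is perfect with trivial multiplier, one gets $N=N'Z$ with $Z\not\subseteq N'$, while $S/N$ cyclic forces $S'=N'$, contradicting $Z\le S'$; the case $p=3$, where $\SL_2(3)$ is not perfect, is checked directly. You need this argument or an equivalent one: part (b) for the overgroups $U$ is exactly what the main $p$-solvable theorem invokes to extend characters, so it cannot be waved through.
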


\begin{proof}
(a)~~This part follows from \cite[8.6.7]{KS04}.

(b)~~Checking $p=3$ directly, we may assume $p\ge 5$.
Suppose by way of contradiction that $U$ has a proper covering group $S$
with $1\ne Z\le \bZ(S) \cap S'$ and $S/Z\cong U$. Let $N\unlhd S$ be the
preimage of $\SL_2(p)$. It is a well-known fact that the Sylow subgroups of
$\SL_2(p)$ are cyclic or quaternion groups. This implies that $\SL_2(p)$
(and any of its subgroups) has trivial
Schur multiplier. In particular, $N$ is not a covering group of
$\SL_2(p)$. Since $\SL_2(p)$ is perfect for $p\ge 5$, we must have
$N=N'Z$ and $Z\nsubseteq N'$. Since $S/N\cong U/\SL_2(p)$ is cyclic, it
follows that $S'=N'$. But this gives the contradiction $Z\nsubseteq S'$.
\smallskip

(c)~~We follow the well-known classification of the subgroups of $\PSL_2(p)$.
Notice that $S$ has a unique involution, so the 2-subgroups of $S$ are cyclic
or generalised quaternion. Set $Z:=\bZ(S)$. Now, if $WZ/Z$ is cyclic of order
a divisor of
$(p\pm 1)/2$, then it follows that $WZ$ is abelian with cyclic Sylow
subgroups. Thus $WZ$ (and $W$) are cyclic of order dividing $p \pm 1$. Suppose
now that $H/Z$ is dihedral of order $2\cdot ({p \pm 1})$, where $H$ is a
$p'$-subgroup of $S$. Then $H/Z$ has a cyclic subgroup of index 2. Thus $H$
has a cyclic subgroup of index 2. Hence if $W$ is a subgroup of $H$, it has
a cyclic normal 2-complement $Q$, and a Sylow 2-subgroup $P$ such that
$|Q:\cent QP|\le 2$. Since $Q$ is cyclic, or generalised quaternion, it
follows that $W$ has a cyclic normal subgroup of index 2.
Suppose next that $WZ/Z=\fA_4$. Then $WZ$ has order 24, centre $Z$ of order 2,
and a unique involution.  Hence $WZ$ is $\SL_2(3)$. The proper subgroups
of $\SL_2(3)$ are cyclic or quaternion of order 8.
Suppose now that $WZ/Z=\fS_4$. Then $WZ$ has order 48, centre $Z$ of order 2
and a unique involution. Thus $WZ={\tt SmallGroup}(48,28)$. The proper
subgroups of this group are cyclic, have a cyclic normal subgroup
of index 2, or are isomorphic to $\SL_2(3)$. Finally, if $WZ/Z=\fA_5$, then
$WZ=\SL_2(5)$. The proper subgroups of $\SL_2(5)$ are already on our list.
\smallskip

(d)~~Write $V=C_p \times C_p$. Note that $\Irr(V)$ is then the dual of the
natural module for $U$, and $\SL_2(p)\le U$ acts transitively on
$\Irr(V)\setminus\{1\}$. Let $\theta\in\Irr(V)$. If $\theta=1$ then it
extends to $G$ and the constituents of $\theta^G$ are the inflations of
characters of $U$.
The ones of degree divisible by $p$ are thus the extensions of the Steinberg
character of $\SL_2(p)$ to $U$, all of degree~$p$. Now assume that
$\theta\ne1$. Then up to conjugation the inertia group $T$ of $\theta$ in $U$
contains all elements $\begin{pmatrix} 1& *\\ 0& *\end{pmatrix}$ in $U$, hence
$T$ is metacyclic with abelian Sylow subgroups and normal Sylow $p$-subgroup.
So $\theta$ extends to $V:T$, and the characters above $\theta$ have degrees
prime to~$p$. Since $|U:T|$ is prime to $p$, all characters in $\Irr(G|\theta)$
are of degree prime to $p$.

(e)~~Consider the canonical map $F:\Aut(L) \rightarrow \Aut(L/L')$.
Then $\Inn(L)$ lies in the kernel of $F$. Assume conversely that $f$ lies in
$\ker(F)$. If $L$ is generated by $x$ and $y$, we have at most $p^2$
possibilities for $f(x)$ in $xL'$ and $f(y)$ in $yL'$. On the other hand,
$|\Inn(L)|=p^2$. Hence, $\ker(F)=\Inn(L)$ and $F$ induces an embedding
$\Out(L)$ in $\GL_2(p)$. Winter \cite{Wi72} shows that $|\Out(L)|=p-1$.
The rest follows from the main theorem in \cite{Wi72}.
\end{proof}

\begin{lem}   \label{aux2}
 Let $S=\SL_2(p)$, where $p$ is an odd prime, and let $\alpha,\beta\in\Irr(S)$
 of degrees $\frac{p-1}{2}$ and $\frac{p+1}{2}$ such that
 $|\alpha(x) + \beta(x)|^2=|\cent V x|$ for all $x \in S$, where
 $V=C_p \times C_p$ is the natural module for $S$. Let $\Psi=\alpha + \beta$.
 \begin{enumerate}[\rm(a)]
  \item We have that $\Psi(x)=\pm 1$ for $p$-regular non-trivial $x \in S$.
   Furthermore, the values of $\Psi$ on $p$-regular elements do not depend of
   the choices of $\alpha$ and $\beta$.
  \item Let $\gamma\in\Irr(S)$ of degree $p$. Let $\Delta=\gamma\Psi$. Assume
   that $p\ge 7$. Then the irreducible $p$-Brauer constituents of
   $\Delta^0$ appear with multiplicity less than or equal to $(p-1)/2$, and
   $|\IBr(\Delta^0)|\le p$. This remains true when $p=5$, except that
   the multiplicity of the unique $\mu \in\IBr(S)$ of degree~3 is 3.
  \item Suppose that $1<W$ is a subgroup of $S$ of order not divisible by $p$.
   Suppose that $\psi$ is a character of $W$ such that $\psi(1)=p$ and
   $\psi(x)=\pm 1$ for $1 \ne x \in W$. Let $\theta,\delta\in\Irr(W)$. Then
   $$[\psi, \theta \delta] \le \frac{p}{2}$$
   unless $W=Q_8$ and $\theta=\delta$ has degree 2, or $|W|=2$. In these cases,
   $$[\psi, \theta \delta] \le \frac{p+1}{2} \, .$$
 \end{enumerate}
\end{lem}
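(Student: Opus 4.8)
The key observation tying the three parts together is that the hypothesis $|\alpha(x)+\beta(x)|^2=|\cent{V}{x}|$ says precisely that $\Psi\bar\Psi=\pi$, where $\pi$ is the permutation character of $S=\SL_2(p)$ acting on the $p^2$ points of $V$; thus $\Psi$ is a Weil character of $S$, of degree $\frac{p-1}2+\frac{p+1}2=p$. I would run everything through the (modular) character theory of $\SL_2(p)$, whose $p$-regular classes are exactly the semisimple ones and which has exactly $p$ irreducible Brauer characters $\vhi_0,\dots,\vhi_{p-1}$, with $\vhi_i$ the reduction of the $i$-th symmetric power of the natural module, of degree $i+1$ and value $\vhi_i(x)=\sum_{k=0}^i\la^{i-2k}$ at a semisimple $x$ with eigenvalues $\la,\la^{-1}$. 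For (a), a non-trivial $p$-regular $x$ is semisimple with eigenvalues $\la,\la^{-1}\ne1$, hence has no non-zero fixed vector on $V$, so $|\cent{V}{x}|=1$ and $|\Psi(x)|=1$. To upgrade this to $\Psi(x)=\pm1$ I would inspect the ordinary character table: the four characters of degrees $\frac{p\pm1}2$ take rational integer values on every semisimple class (their only irrational values, involving $\sqrt{\pm p}$, sit on the unipotent classes, which are not $p$-regular), and the two characters of each degree agree there. This simultaneously forces $\Psi(x)$ to be a rational integer, whence $\Psi(x)=\pm1$, and shows the value is independent of the choice of $\alpha,\beta$.

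For (b), the unique $\gamma\in\Irr(S)$ of degree $p$ is the Steinberg character, which is projective and irreducible modulo $p$ with $\gamma^0=\vhi_{p-1}$; expanding $\Psi^0$ in the $\vhi_i$ using (a) gives $\Psi^0=\vhi_{(p-3)/2}+\vhi_{(p-1)/2}$. Hence $\Delta^0=\gamma^0\Psi^0$ is a projective Brauer character, and I would read off the multiplicities $m_i:=[\Delta^0,\vhi_i]$ by writing down the values of $\Delta^0$ on the $p$-regular classes (namely $p^2$ at $1$, $\pm p$ at $-1$, and $\pm1$ elsewhere) and expanding in $\{\vhi_i\}$ via the formula above. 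A useful global check, and the structural reason the multiplicities stay small, is the identity
$$[\Delta,\Delta]=[\gamma^2\pi,1]=[\gamma,\gamma]+[\gamma|_{C_p},\gamma|_{C_p}]=1+p,$$
obtained from $\Psi\bar\Psi=\pi$ together with $\gamma|_{C_p}=\rho_{C_p}$ for a Sylow $p$-subgroup $C_p$. One then checks that all $m_i$ are at most $(p-1)/2$; the bound $|\IBr(\Delta^0)|\le p$ is automatic since $|\IBr(S)|=p$. The statement for $p=5$ (multiplicity $3$ for the degree-$3$ Brauer character $\mu$) is a genuine small-prime coincidence in this expansion, to be verified directly.

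For (c), writing $n=|W|$, $a=\theta(1)$, $b=\delta(1)$ and separating off the identity,
$$[\psi,\theta\delta]=\frac{pab}{n}+\frac1n\sum_{1\ne x\in W}\psi(x)\,\overline{\theta(x)\delta(x)}.$$
Since $|\psi(x)|=1$ for $x\ne1$, Cauchy--Schwarz together with $\sum_{x\ne1}|\theta(x)|^2=n-a^2$ bounds the error term by $\frac1n\sqrt{(n-a^2)(n-b^2)}\le1$, giving the clean estimate $[\psi,\theta\delta]\le\frac{pab}{n}+1$. As $[\psi,\theta\delta]$ is a non-negative integer and $p$ is odd, the main bound $\frac p2$ follows as soon as $\frac{pab}{n}+1<\frac{p+1}2$, i.e.\ $ab<\frac{n(p-1)}{2p}$; and when $ab=\frac n2$ the same estimate already yields $[\psi,\theta\delta]\le\frac p2+1$, hence $\le\frac{p+1}2$. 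It then remains to run through the $p'$-subgroups of $\SL_2(p)$ from Lemma~\ref{aux}(c) --- cyclic, with a cyclic subgroup of index $2$, or one of $\SL_2(3)$, ${\tt SmallGroup}(48,28)$, $\SL_2(5)$ --- and to verify case by case that $ab<\frac{n(p-1)}{2p}$ (the slack being supplied by the lower bound on $p$ forced by the existence of the embedding $W\le\SL_2(p)$), the only equality $ab=\frac n2$ occurring for $W=Q_8$ with $\theta=\delta$ the degree-$2$ character and for $|W|=2$. These are exactly the stated exceptions.

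The genuinely delicate point is the uniform multiplicity bound in (b): establishing $m_i\le(p-1)/2$ for all $p\ge7$ (the bound being attained, for instance at the degree-$4$ constituent when $p=7$) requires controlling the expansion of $\Delta^0$ across the two cyclic-defect blocks of $\SL_2(p)$, and isolating the single degree-$3$ constituent responsible for the $p=5$ anomaly. By contrast, parts (a) and (c) reduce, respectively, to a table inspection and to the one-line inequality above refined by the subgroup classification, and I expect no essential difficulty there.
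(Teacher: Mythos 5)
The one genuine gap is in part~(b). Your reduction $\Psi^0=\vhi_{(p-3)/2}+\vhi_{(p-1)/2}$ and $\gamma^0=\vhi_{p-1}$ is fine, but the actual content of (b) --- the uniform multiplicity bound $m_i\le(p-1)/2$ --- is nowhere established: you outline a computation ("read off the multiplicities by expanding") and then concede it is "the genuinely delicate point". Your proposed global identity cannot substitute for it: from $[\Delta,\Delta]=p+1$ together with $0/1$ decomposition numbers, Cauchy--Schwarz only gives $m_i\le\sqrt{3(p+1)}$, which already exceeds $(p-1)/2=3$ at $p=7$; and $[\Delta,\Delta]=p+1$ does not even force $\Delta$ to be multiplicity free (a constituent of multiplicity $2$ is still consistent with it). The paper closes this with a short structural argument you miss: one checks from the ordinary character table that $\Delta=\gamma\Psi$ \emph{is} multiplicity free (and involves neither the trivial character nor those of degree $\frac{p-1}{2}$); the positive-defect $p$-blocks of $\SL_2(p)$ have cyclic defect groups whose Brauer trees carry an exceptional node of multiplicity~$2$, so each irreducible Brauer character occurs, with decomposition number at most~$1$, in at most \emph{three} ordinary irreducibles; hence every multiplicity in $\Delta^0$ is at most $3\le(p-1)/2$ once $p\ge7$, while $p=5$ is settled by direct calculation and yields exactly your degree-$3$ exception, and $|\IBr(\Delta^0)|\le p$ is immediate from $l(S)=p$, as you also note. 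Without this observation, or an actually executed expansion uniform in $p$, your (b) remains an assertion rather than a proof.

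Parts (a) and (c) are essentially sound. Your (a) is the paper's argument (a character-table inspection: the irrational values of the degree-$\frac{p\pm1}{2}$ characters live only on classes that are not $p$-regular). In (c) you take a genuinely different and arguably cleaner route: the paper bounds the error term crudely by $\theta(1)\delta(1)(|W|-1+p)/|W|$, verifies the primes $3\le p\le23$ by GAP, and then handles $Q_8$, $\SL_2(3)$, ${\tt SmallGroup}(48,28)$ and $\SL_2(5)$ using explicit character values; your Cauchy--Schwarz estimate $[\psi,\theta\delta]\le p\,\theta(1)\delta(1)/|W|+1$, combined with integrality of $[\psi,\theta\delta]$ and $p$ odd, is sharper and uniform in $p$, eliminating both the computer check and most of the value-chasing. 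You do still owe the finite verification that $\theta(1)\delta(1)<|W|(p-1)/(2p)$ across the subgroup list of Lemma~\ref{aux}(c) --- the borderline configurations (for instance dicyclic groups of small order at small $p$) are ruled out by the constraints that $W\le\SL_2(p)$ with $p\nmid|W|$ --- but that check is routine, and your identification of the exceptional cases ($|W|=2$, and $W=Q_8$ with $\theta=\delta$ of degree~$2$, where $\theta(1)\delta(1)=|W|/2$) is correct.
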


\begin{proof}
Part~(a) follows immediately from the well-known character table of
$\SL_2(p)$.

(b) From the ordinary character table it can be worked out easily that
$\gamma\Psi$ is multiplicity free (and does not involve the trivial character
nor the characters of degree $\frac{p-1}{2}$). The Brauer trees of the
$p$-blocks of $S$ of positive defect have an exceptional node of
multiplicity~2, so any Brauer character occurs in at most three ordinary
characters. This shows the first claim for $p\ge7$. For $p=5$, direct
calculation suffices. The second assertion is immediate, as $l(S)=p$.

(c)~~If $W$ has order 2, our assertion easily follows because $\psi(1)=p$,
and $\psi(w)=\pm 1$ if $1 \ne w \in W$. Suppose that $W$ is a $p'$-subgroup
of $S$ with order $|W|>2$. We have checked (c) with GAP for primes
$3\le p\le 23$. Hence, we may assume that $p>23$, if necessary.

Now $W$ is one of the groups in Lemma~\ref{aux}(c). Let
$\theta,\delta\in\Irr(W)$. Then
$$[\psi, \theta\delta]=\frac{1}{|W|}
  \sum_{w \in W} \psi(w)\theta(w^{-1})\delta(w^{-1}) \le
  \frac{\theta(1)\delta(1) (|W|-1+p)}{|W| } \, ,$$
using that $|\theta(w)| \le \theta(1)$ for $\theta \in \Irr(W)$.

Suppose first that $\theta$ and $\delta$ are linear. Then we see that
$[\psi, \theta\delta] \le p/2$ if $p>3$. Thus we may assume that $W$ is
non-abelian.

Suppose now that $W$ has a normal abelian subgroup of index 2. If
$\theta,\delta\in\Irr(W)$, then $\theta(1)\delta(1)\le 4$. We see, assuming
that $p\ge 23$, that $[\psi_W, \theta\delta] \le p/2$ if $|W|\ge 12$.
There is only one non-abelian group of order less than 12 with a unique
involution, which is $Q_8$. If $\theta(1)\delta(1)\le 2$, then
$[\psi, \theta\delta]\le p/2$.
So we assume that $\theta=\delta$ has degree 2. Let $Z=\bZ(W)$.
Then using that $\theta$ is zero off $Z$, we have that
$$[\psi, \theta\delta]=\frac{1}{8} (4p \pm 4)=\frac{p \pm 1}{2} \, .$$

Suppose now that $W=\SL_2(3)$. The largest character degree of $W$ is~3, and
there is a unique character $\theta$ with that degree. Assuming that $p\ge 23$,
we have that $[\psi, \theta\delta] \le p/2$, if $\theta(1)\delta(1) \le 6$.
Assume now that $\theta=\delta$ has degree~3. This character has $Z$ in its
kernel, and otherwise takes value~0 except on the unique conjugacy class of
elements of order 4. On these six elements, $\theta$ has value $-1$. Hence
$$[\psi, \theta\delta] \le \frac{1}{24}(9p +9 + 6) \le p/2 \, $$
(if $p\ge 5$, which we are assuming).

Suppose now that $W={\tt SmallGroup}(48,28)$. This group has a unique
character $\theta \in \Irr(W)$ of degree 4. By using the values of this
character, we have that
$$[\psi, \theta^2] \le \frac{1}{48}(16p+32) \le p/2$$
for $p \ge 5$. If $\theta(1)\delta(1)=12$, then
$\theta\delta$ is zero except on $\bZ(W)$, and the inequality is clear.
If $\theta(1)\delta(1)=9$, we again use the character values to check the
inequality. Finally, if $\theta(1)\delta(1) \le 6$, then we do not need to use
the character values. The case $W=\SL_2(5)$ is done similarly.
\end{proof}

The character $\Psi$ in Lemma~~\ref{aux2} is relevant in the character theory
of fully ramified sections, as we shall see.

\begin{lem}   \label{aux3}
 Suppose that $L \nor G$ is an extra-special group of order $p^3$ and exponent
 $p$, where $p\ge 5$ is odd. Let $Z=\bZ(L) \sbs \bZ(G)$, and assume that
 $G/L \cong \SL_2(p)$ and that $\cent GL=Z$. Let $\alpha, \beta \in \Irr(G/L)$
 of degrees ${\frac{p-1}{2}}$ and $\frac{p+1}{2}$ such that
 $|\alpha(x) + \beta(x)|^2=|\cent V x|$ for all $x \in S$, where $V$ is the
 natural module for $S$. Let $1\ne\lambda\in\Irr(Z)$, and write
 $\lambda^L=p\eta$, where $\eta \in \Irr(L)$. Then $\eta$ has a unique
 extension $\hat\eta \in \Irr(G)$ and $\hat\eta^0=\alpha^0 + \beta^0$.
\end{lem}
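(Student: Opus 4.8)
The plan is to realise $\hat\eta$ as (the character of) a Weil representation and then to read off its restriction to a complement of $L$, thereby reducing the whole statement to the well-understood Weil character of $\SL_2(p)$.

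First I would settle existence and uniqueness of $\hat\eta$. Since $\lambda^L=p\eta$, the character $\eta$ is the only irreducible constituent of $L$ lying over $\lambda$, and as $Z\sbs\bZ(G)$ fixes $\lambda$ it follows immediately that $\eta$ is $G$-invariant. The obstruction to extending an invariant character across $G/L\cong\SL_2(p)$ lies in the Schur multiplier of $\SL_2(p)$, which is trivial for $p\ge5$ by Lemma~\ref{aux}(b) (take $U=\SL_2(p)$); hence $\eta$ extends to some $\hat\eta\in\Irr(G)$. Because $\SL_2(p)$ is perfect it has no nontrivial linear characters, so by Gallagher's theorem the extension is unique. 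This already gives the first assertion.

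Next I would put $G$ into split form. As $Z\sbs\bZ(G)$, conjugation maps $G$ into $\cent{\Aut(L)}{Z}$ with kernel $\cent GL=Z$, and Lemma~\ref{aux}(e) identifies $\cent{\Aut(L)}{Z}/\Inn(L)\cong\SL_2(p)$, acting on $V=L/Z$ as the symplectic group $\Sp(V)$. Using this together with the triviality of the multiplier of $\SL_2(p)$ one checks that $G$ splits as $G=L\rtimes S_0$ with $S_0\cong\SL_2(p)$. Now a representation $\rho$ affording $\hat\eta$ restricts on $L$ to the (Stone--von Neumann) Schr\"odinger representation attached to $\lambda$, and $\rho|_{S_0}$ intertwines this action through $\Sp(V)$; that is, $\rho|_{S_0}$ is a Weil representation of $\SL_2(p)$. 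Thus $\zeta:=\hat\eta|_{S_0}$ is a Weil character, of degree $p$ and satisfying the standard identity $|\zeta(x)|^2=|\cent Vx|$ for $x\in S_0$. Such a character is precisely one of the two complex-conjugate Weil characters $\alpha+\beta$ and $\overline\alpha+\overline\beta$. Finally, every $p$-regular class of $G$ meets $S_0$ (a $p$-regular $g$ is conjugate to $\bar g\in S_0$ by Schur--Zassenhaus inside $L\langle g\rangle$), so $\hat\eta^0$ is determined by $\zeta$ on $p$-regular elements; by Lemma~\ref{aux2}(a) the two Weil characters agree there, whence $\hat\eta(g)=(\alpha+\beta)(\bar g)$ for every $p$-regular $g$, which is exactly $\hat\eta^0=\alpha^0+\beta^0$.

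The main obstacle is the middle step: proving that $\rho|_{S_0}$ really is a Weil representation and pinning its character down to $\alpha+\beta$ up to complex conjugation. This is where Stone--von Neumann and the triviality of the Schur multiplier do the work, and where the seemingly innocuous ``independence of the choices of $\alpha$ and $\beta$'' in Lemma~\ref{aux2}(a) becomes essential: it is exactly what removes the residual Galois ambiguity between $\alpha+\beta$ and $\overline\alpha+\overline\beta$ once we pass to $p$-regular classes. A subsidiary technical point is the full strength of $|\zeta(x)|^2=|\cent Vx|$ on $p$-\emph{singular} classes, needed to recognise $\zeta$ as a sum of characters of degrees $\frac{p-1}2$ and $\frac{p+1}2$; on $p$-regular classes it is elementary, but in general it relies on the Weil-representation structure.
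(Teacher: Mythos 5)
Your argument is correct, and while its first half coincides with the paper's (the $G$-invariance of $\eta$, extension via the trivial Schur multiplier of $\SL_2(p)$ from Lemma~\ref{aux}(b) together with \cite[Thm.~11.7]{Isa}, and uniqueness via Gallagher plus perfectness), the second half takes a genuinely different route. The paper never explicitly builds a Weil representation: it produces a subgroup $N=\bN_G(Q)$, for $Q\in\Syl_2(K)$ with $K/L=\bZ(G/L)$, via the Frattini argument and the fixed-point-free action of $Q$ on $L/Z$, so that $N/Z$ complements $L/Z$ and $N=Z\times N'$ with $N'\cong\SL_2(p)$; it then invokes Isaacs's fully ramified section machinery, namely \cite[Thm.~9.1]{Isa73} to write $\hat\eta_N=\psi\nu$ with $\psi$ of degree $p$, and the proof of \cite[Thm.~4.8]{Isa73} to identify $\psi=\Psi$. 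You instead split $G=L\rtimes S_0$ and recognise $\hat\eta|_{S_0}$ as the Weil character directly via Stone--von Neumann uniqueness plus perfectness and the trivial multiplier --- in effect re-proving the relevant special case of Isaacs's theorem; this is more self-contained conceptually but relies on standard Weil-representation facts (the decomposition into degrees $\frac{p\pm1}{2}$ and the identity $|\zeta(x)|^2=|\cent Vx|$), where the paper gets them for free from \cite{Isa73}. Both proofs then resolve the two-fold ambiguity in exactly the same way, by the choice-independence on $p$-regular classes in Lemma~\ref{aux2}(a), a point the paper makes parenthetically; your observation that every $p$-regular class meets $S_0$ (Schur--Zassenhaus in $L\langle g\rangle$) is a correct substitute for the paper's restriction to $N$. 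One point deserves repair: your phrase that the splitting $G=L\rtimes S_0$ follows ``from the triviality of the multiplier of $\SL_2(p)$'' is too quick, since the multiplier only controls \emph{central} extensions and $L$ is not central in $G$. The honest check is either the paper's Frattini argument (which yields the complement $N/Z$ to $L/Z$, after which the trivial multiplier splits off the central $Z$), or the standard splitting of $\cent{\Aut(L)}Z$ over $\Inn(L)$ in the Heisenberg model --- which Lemma~\ref{aux}(e) as stated gives only at the level of the quotient, not as a semidirect decomposition. This is a fixable gloss rather than a gap, but as written that sentence does not constitute a proof of the splitting.
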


\begin{proof}
Let $\Psi=\alpha + \beta$.
If $K/L$ is the centre of $G/L$, and $Q \in \Syl_2(K)$, then $N/Z$ is the
unique (up to $G$-conjugacy) complement of $L/Z$ in $G/Z$, where $N=\bN_G(Q)$.
This follows by the Frattini argument and the fact that $Q$ acts on $L/Z$ with
no non-trivial fixed points. Now, $N$ is a central extension of $\SL_2(p)$
so $N=Z \times N'$, where $N' \cong \SL_2(p)$. Since the Schur multiplier
of $G/L$ is trivial (by Lemma~\ref{aux}(b)), it follows that $\eta$ extends
to $G$ by \cite[Thm.~11.7]{Isa}. Since $G/L$ is perfect, there is a unique
extension $\hat\eta \in \Irr(G)$ by Gallagher's theorem.
Now, by \cite[Thm.~9.1]{Isa73}, there is a character $\psi$ of degree $p$
of $G/L$ such that
$$\hat\eta_N=\psi \nu$$
for some linear character $\nu$ of $N$ over $\lambda$. Since $N'$ is perfect,
we have that $\nu=\lambda \times 1_{N'}$. By the proof of
\cite[Thm.~4.8]{Isa73}, we have that $\psi=\Psi$, and therefore
$\hat\eta^0=\Psi^0$.
(Notice that there are two choices of $\Psi$, but $\Psi^0$ is uniquely
determined by Lemma~\ref{aux2}(a).)
\end{proof}

\begin{lem}   \label{multi}
 Suppose that $H\le G$ are finite groups. Let $\chi\in\Irr(G)$ and
 $\theta\in\Irr(H)$. Then we have $[\chi_H,\theta]^2\le|G:H|$.
\end{lem}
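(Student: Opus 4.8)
The plan is to exploit Frobenius reciprocity together with the obvious degree bounds coming from restriction and induction. Write $m=[\chi_H,\theta]$ for the multiplicity in question. By Frobenius reciprocity this equals $[\chi,\theta^G]$, the multiplicity of $\chi$ as a constituent of the induced character $\theta^G$. The idea is that the same integer $m$ can be controlled from two opposite directions, and multiplying the resulting inequalities yields exactly the claimed bound.

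First I would bound $m$ from the restriction side: since $\theta$ occurs with multiplicity $m$ as a constituent of $\chi_H$, comparing degrees gives $m\,\theta(1)\le\chi(1)$. Next I would bound $m$ from the induction side: since $\chi$ occurs with multiplicity $m$ as a constituent of $\theta^G$, and $\theta^G(1)=|G:H|\,\theta(1)$, comparing degrees gives $m\,\chi(1)\le|G:H|\,\theta(1)$.

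Multiplying the two inequalities, the common factor $\theta(1)\chi(1)$ cancels and one obtains $m^2\le|G:H|$, which is the assertion. There is no serious obstacle here; the only point that must be handled correctly is the use of Frobenius reciprocity to reinterpret the restriction multiplicity $[\chi_H,\theta]$ as the induction multiplicity $[\chi,\theta^G]$, since it is precisely this dual interpretation that lets the single number $m$ be squeezed simultaneously between the degree of $\chi$ and the degree of $\theta^G$.
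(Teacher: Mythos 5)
Your proof is correct and is essentially identical to the paper's: both use Frobenius reciprocity to read $m=[\chi_H,\theta]$ as $[\chi,\theta^G]$, then combine the degree bounds $m\,\theta(1)\le\chi(1)$ and $m\,\chi(1)\le|G:H|\,\theta(1)$ (the paper chains them as $|G:H|\,\theta(1)\ge e\,\chi(1)\ge e^2\,\theta(1)$, which amounts to the same multiplication of inequalities). Nothing is missing.
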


\begin{proof}
Write $\chi_H=e\theta + \Delta$, where $\Delta$ is a character of $H$ or zero,
and $[\Delta,\theta]=0$. Then $\chi(1) \ge e\theta(1)$. By Frobenius
reciprocity, we have that $\theta^G=e\chi + \Xi$, where $\Xi$ is a character
of $G$ or zero. Hence $|G:H|\theta(1) \ge e\chi(1) \ge e^2\theta(1)$, and the
proof is complete.
\end{proof}

\begin{thm}   \label{thm:p-solv}
 Suppose that $G$ is a finite group and let $\chi \in \Irr(G)$ be such that
 $\chi(1)_p=|G|_p/p^2$. Let  $N=\oh {p'}G$ and $L=\oh pG$. Suppose that
 $N \sbs \bZ(G)$, and that $\cent GL \sbs LN$. Then $|\IBr(\chi^0)| \le p^2-1$
 and $d_{\chi \vhi} \le p$ for all $\vhi \in \IBr(G)$.
\end{thm}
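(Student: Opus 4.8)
The plan is to reduce, via Clifford theory over $L=\oh pG$, to an essentially self-contained character computation governed by the action of $G/L$ on $L$. First I would pick $\theta\in\Irr(L)$ lying under $\chi$ and replace $G$ by the inertia group $T=I_G(\theta)$: as recorded just before Lemma~\ref{aux}, induction from $T$ preserves the defect, and the Clifford correspondence together with its block-theoretic refinement identifies $\IBr(\chi^0)$ and the decomposition numbers $d_{\chi\vhi}$ with the corresponding data for the Clifford correspondent of $\chi$ in $T$. Thus I may assume $\theta$ is $G$-invariant. Since $N=\oh{p'}G\le\bZ(G)$ is a central $p'$-group, a routine central $p'$-reduction changes neither $|\IBr(\chi^0)|$ nor the $d_{\chi\vhi}$, so I may further assume $N=1$, whence $\cent GL=Z:=\bZ(L)$ and $G/Z\hookrightarrow\Aut(L)$.

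Next I would translate the defect hypothesis into structural information about the character triple $(G,L,\theta)$. Writing $\chi_L=e\theta$ with $e=\chi(1)/\theta(1)$, the associated irreducible projective representation of $G/L$ has degree $e$, and $\chi(1)_p=|G|_p/p^2$ forces $e_p$ to be small; combined with $\cent GL=Z$, this severely limits the $p$-part of the group $G/\cent GL$ acting faithfully on $L$. Using Lemma~\ref{aux}(a) for the image in the relevant $\GL_2(p)$, and Lemma~\ref{aux}(e) to identify $\Aut(L)$ when $L$ is extra-special of order $p^3$, I expect exactly two regimes to survive: either the image of $G$ in $\Aut(L)$ has a normal Sylow $p$-subgroup, or it contains a copy of $\SL_2(p)$ acting naturally on a section $V=C_p\times C_p$ of $L$. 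Establishing this dichotomy, i.e.\ forcing the genuinely noncyclic part of the action to be $2$-dimensional, is the crucial structural step.

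In the first regime the acting group is metacyclic-like, and I would bound both quantities directly: Lemma~\ref{multi} controls the relevant restrictions, while Lemma~\ref{aux}(d) shows that in the model group $(C_p\times C_p):U$ with $\SL_2(p)\sbs U\sbs\GL_2(p)$ the only characters of degree divisible by $p$ have degree exactly $p$, which pins down the few Brauer constituents and keeps all multiplicities at most $p$. The second, fully ramified regime is the heart of the argument. Here $\theta=\eta$ is the degree-$p$ character of the extra-special group $L$, with canonical extension $\hat\eta$ as in Lemma~\ref{aux3}, satisfying $\hat\eta^0=\Psi^0=\alpha^0+\beta^0$ with $\alpha(1)=\tfrac{p-1}2$, $\beta(1)=\tfrac{p+1}2$. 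The defect-two characters above $\eta$ are then the products $\hat\eta\cdot\gamma$ with $\gamma$ of degree $p$ (the Steinberg character of $\SL_2(p)$, inflated and possibly extended across the cyclic $p'$-quotient $\GL_2(p)/\SL_2(p)$). Consequently $\chi^0$ is built from $(\gamma\Psi)^0$, and Lemma~\ref{aux2}(b) yields $|\IBr(\Delta^0)|\le p\le p^2-1$ together with constituent multiplicities at most $\tfrac{p-1}2\le p$ for the pure $\SL_2(p)$ contribution. To pass to a general intermediate group $\SL_2(p)\sbs U\sbs\GL_2(p)$, and to treat the $p'$-subgroups $W$ arising as inertia groups, I would invoke Lemma~\ref{aux2}(c) to bound the product-character multiplicities $[\psi,\theta\delta]$ by $p/2$ (with the explicit exceptions for $W=Q_8$ and $|W|=2$), and then account for the $p'$-index $|U:\SL_2(p)|\le p-1$ to reach the global bound $p^2-1$.

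The main obstacle will be the uniform bookkeeping in the fully ramified regime: assembling the local multiplicity estimates of Lemma~\ref{aux2}(b),(c) over the cyclic $p'$-extensions $U$ and over the various conjugacy situations so that the two separate bounds $|\IBr(\chi^0)|\le p^2-1$ and $d_{\chi\vhi}\le p$ hold simultaneously and with the right constants. The small primes $p\le 3$, as well as the borderline $p=5$ where Lemma~\ref{aux2}(b) already records an exceptional multiplicity $3$, lie outside the clean $\SL_2(p)$ estimates; I would settle these by the direct, computer-assisted verification underlying Lemma~\ref{aux2}(c), and then check that none of these exceptions ever pushes $|\IBr(\chi^0)|$ past $p^2-1$ or a decomposition number past $p$.
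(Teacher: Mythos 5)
Your outline correctly identifies the two main regimes (normal Sylow $p$-subgroup of the acting group versus an $\SL_2(p)$ acting on a section of $L$, with the fully ramified analysis via Lemmas~\ref{aux3} and~\ref{aux2}) and this matches the skeleton of the paper's case (b). But your opening reductions are unsound, and they are not what the paper does. There is no Brauer-side Clifford correspondence over the normal $p$-subgroup $L$: since $L$ is a $p$-group, $\IBr(L)=\{1_L\}$, every irreducible Brauer character of $G$ lies over $1_L$, and the unique block of $L$ is $G$-stable, so the Fong--Reynolds ``block-theoretic refinement'' is vacuous here; the remark before Lemma~\ref{aux} preserves only the defect $d_\chi$. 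Induction from $T=I_G(\theta)$ genuinely changes both invariants: in the paper's case (b2) ($L$ extraspecial of exponent $p^2$) one has $I_G(\eta)=L$, the Clifford correspondent $\eta$ has a single Brauer constituent with multiplicity $p$, while $\chi=\eta^G$ has up to $p-1$ distinct constituents; and in the crucial case (b.3.4), constituent multiplicities can double under induction (stabilisers $T_i$ of index $2$), so $d_{\chi\vhi}\le p$ is reached only as $2d_i\le p$ with the sharper local bounds $d_i<p/2$ from Lemma~\ref{aux2}(b),(c). If your claimed identification were valid, that bookkeeping --- which you then carry out anyway --- would be superfluous; in truth the theorem must be proved for induced $\chi$ directly, which is exactly what the paper does by writing $\chi=\mu^G$ and tracking the Brauer constituents of $(\mu^0)^G$. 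Likewise the ``routine central $p'$-reduction'' to $N=1$ is not routine: characters over a non-extending $\theta\in\Irr(N)$ are not inflations from $G/N$, and the paper passes to $G/N$ only after verifying in each branch that $\theta$ extends, using the Schur multiplier facts of Lemma~\ref{aux}(b) or cyclic-Sylow arguments; this is precisely why case (b.3.3) at $p=3$ needs separate care, where subgroups of $\GL_2(3)$ such as $C_2\times C_2$, $D_8$, $D_{12}$ have nontrivial Schur multiplier.

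A second substantive gap: when $L$ is a Sylow $p$-subgroup of $G$ --- which occurs both for $L$ abelian (the paper's case (a1)) and for $L$ extraspecial with $p'$-image $W\sbs\GL_2(p)$ (case (b.3.1)) --- your ``first regime'' tools do not produce the count $|\IBr(\chi^0)|\le p^2-1$. Lemma~\ref{multi} only bounds the decomposition numbers (via $d^2\le|G:H|=p^2$ for a $p$-complement $H$), and Lemma~\ref{aux}(d) is available only when $\SL_2(p)\sbs U$, which need not hold for an arbitrary $p'$-subgroup acting on $C_p\times C_p$. The paper obtains the count from the $k(GV)$-theorem: applied to $(C_p\times C_p){:}H/N$ it gives $k\le p^2$, whence $|\Irr(H|\theta)|\le p^2-1$ (respectively, via \cite[Thm.~10.20]{N} and \cite[Thm.~3.18]{N} in case (a1)). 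This deep input is absent from your proposal and cannot be replaced by the direct estimates you name, so as written the bound $|\IBr(\chi^0)|\le p^2-1$ is unproved in exactly the cases your dichotomy labels as easy.
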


\begin{proof}
If $\chi^0 \in \IBr(G)$, then $d_{\chi \vhi} \le 1$ and $|\IBr(\chi^0)|=1$.
We may clearly assume that $\chi(1) >p$. Let $\theta \in \Irr(N)$ be the
irreducible constituent of the restriction $\chi_N$.

By hypothesis, we have that $\cent GL=\bZ(L) \times N$. In particular,
$L>1$ (because otherwise $G$ is a $p'$-group and $\chi^0 \in \IBr(G)$). Also,
we have that $G/\cent GL \cong U \sbs \Aut(L)$. Since $N$ is central,
then $\oh p{G/N}=LN/N$, and thus $\oh p U \cong L/\bZ(L)$.

Let $\eta \in \Irr(L)$ be under $\chi$. Then $\chi(1)_p/\eta(1)$ divides
$|G|_p/|L|$ by \cite[Cor.~11.29]{Isa}. Using the hypothesis, we have that
$|L|/\eta(1) \le p^2$. Since $\eta(1)<|L|^{1/2}$ because $L$ is a
non-trivial $p$-group, we deduce that $|L|\le p^3$. Also, $\eta(1) \le p$.

If $G/LN$ is a $p$-group, then $G=N\times L$.
Then $\chi=\theta \times \eta$. Thus $\chi^0=\eta(1)\theta$.
Hence $d_{\chi\vhi}=\eta(1)\le p$, and $|\IBr(\chi^0)|=1$, so
the theorem is true in this case too.

Suppose that $\theta$ extends to some $\gamma \in \Irr(G)$. Notice that
$\gamma^0 \in \IBr(G)$ also extends $\theta$. By Gallagher's Corollary 6.17
of \cite{Isa}, we know that $\chi=\beta \gamma$ for some $\beta\in\Irr(G/N)$.
If $\beta^0=d_1\tau_1 + \ldots + d_s\tau_s$, where
$\tau_i \in \IBr(G/N)$ are distinct, then we have that
$$\chi^0=d_1\tau_1 \gamma^0  + \ldots + d_s\tau_s \gamma^0$$
and that the $\tau_i\gamma^0$ are also distinct and irreducible (using that
$\gamma^0$ is linear). In particular, we see that if $\theta$ extends to $G$,
then the theorem holds for $G$ if it holds for $G/N$.
\medskip

(a)~~Assume first that $\eta(1)=1$. We have then that $|L|\le p^2$, by the
third paragraph of this proof. In particular, $L$ is abelian. In this case,
$G/LN \cong U \sbs \Aut(L)$ and $\oh pU=1$.
\smallskip

(a1)~~If $L$ is a Sylow $p$-subgroup of $G$, then by \cite[Thm.~10.20]{N},
we have that $\Irr(G|\theta)$ is a block of $G$, which has defect group $L$.
By the $k(GV)$-theorem, we have that $|\Irr(G|\theta)|\le p^2$. Hence
$|\IBr(G|\theta)|\le p^2-1$ by \cite[Thm.~3.18]{N}. Thus
$|\IBr(\chi^0)|\le p^2-1$. Also, notice that the decomposition numbers are
$$d=[\chi_H, \mu]$$
for $\mu \in \Irr(H)$, where $H$ is a $p$-complement of $G$.
By Lemma~\ref{multi}, $d^2\le|G:H|=p^2,$ and the theorem is true in this case.
\smallskip

(a2)~~We assume that $L$ is not a Sylow $p$-subgroup of $G$. Suppose first that
$L$ is cyclic. Notice that $L$ cannot have order $p$, since $p^2$ divides the
order of $G$ (and $|\Aut(C_p)|$ is not divisible by $p$). Suppose that
$L=C_{p^2}$. Then $G/LN$ is cyclic. Since $\oh p{G/LN}=1$, we conclude that
$G/LN$ is cyclic of order dividing $p-1$. Thus $L$ is a normal Sylow
$p$-subgroup of $G$, a contradiction.

Assume now that $L=C_p \times C_p$. In this case $G/LN\cong U \sbs \GL_2(p)$.
Thus $|U|_p= p$. Recall that $\oh pU=1$. By Lemma~\ref{aux}(a), we conclude
that $\SL_2(p) \sbs U \sbs \GL_2(p)$. Therefore $U$ has trivial Schur
multiplier by Lemma~\ref{aux}(b). Now, consider $\hat \theta=1_L\times\theta$.
By \cite[Thm.~11.7]{Isa}, we have that $\hat\theta$ extends to $G$. In
particular, so does $\theta$. Again, by the fifth paragraph of this proof,
we may assume in this case that $N=1$. If $p=2$, then $G=\fS_4$ and
$\chi(1)=2$, so the theorem is true in  this case.
If $p$ is odd, let $Z/L \sbs \bZ(G/L)$ of order~2, and let $Q \in \Syl_2(Z)$.
Since $Q$ acts on $L$ as the minus identity matrix, we have that $\cent LQ=1$.
Therefore $G$ is the semidirect product of $L$ with $\bN_G(Q)$. In this case,
by Lemma \ref{aux}(d), we have that $\chi(1)=p$, and we are also done in this
case.
\medskip

(b)~~Assume now that $\eta(1)=p$.
Hence $L$ is extraspecial of order $p^3$ and exponent $p$ or $p^2$.
Write $Z=\bZ(L)$. We have that $G/ZN\cong U \sbs \Aut(L)$ and
$\oh p U=C_p \times C_p$.
Also, write $\eta_Z=\eta(1) \lambda$, where $1\ne\lambda\in\Irr(Z)$.
\smallskip

(b1)~~Suppose first that $p=2$. Then $L=D_8$ or $Q_8$. If $L=D_8$, then
$\Aut(L)$ is a 2-group, and then $G=L \times N$, and the theorem is true in
this case. If $L=Q_8$, then $\Aut(Q_8)$ is $\fS_4$. Then $G/(L \times N)$ is
a subgroup of $\fS_3$. Then $\theta \times 1_L$ (and therefore $\theta$)
extends to $G$. By the fifth paragraph of this proof, we may therefore assume
that $N=1$. Thus $|G|\le 48$. In this case, $G=\SL_2(3), \GL_2(3)$ or the
{\sl fake} $\GL_2(3)$ ({\tt SmallGroup}(48,28)).
If $G=\SL_2(3)$, then $\chi(1)=2$, and we are done. In the remaining cases,
$\chi(1)=4$, $|\IBr(\chi^0)|=2$ and $d_{\chi \vhi}=1$ or 2.
Hence, we may assume that $p$ is odd.
\smallskip

(b2)~~Suppose first that $L$ is extra-special of exponent $p^2$, $p$ odd.
By \cite{Wi72}, we have that $\Aut(L)=X:C_{p-1}$, were $|X|=p^3$ has as a
normal subgroup $\Inn(L)$, and $C_{p-1}$ acts Frobenius on $Z$. Thus
$G/(L \times N)$ has cyclic Sylow subgroups.
It follows that $1_L \times \theta$ (and therefore $\theta$) extends to $G$.
Recall that $\oh p{G/Z}=L/Z$. Since $G/L$ is a subgroup of $C_p:C_{p-1}$ and
this group has a normal Sylow $p$-subgroup, it follows that $G/L$ is cyclic
of order dividing $p-1$. Now, $G$ is the semidirect product of $L$ with a
cyclic group $C$ of order $h$ dividing $p-1$ that acts Frobenius on $Z$.
Since $C$ acts Frobenius on $Z$, it follows that the stabiliser
$I_G(\lambda)=L$. Since $\lambda^L=p\eta$, it follows that $I_G(\eta)=L$.
Hence $\chi=\eta^G$. Then $\chi_C=p \rho$, where $\rho$ is the regular
character of $C$. Thus $d_{\chi \vhi}=p$ for every $\vhi \in \IBr(G)$
and $|\IBr(\chi^0)|=h \le p-1$. The theorem follows in this case too.

\smallskip
(b3)~~So finally assume that $L$ is extra-special of exponent $p$, $p$ odd.
We have that $\Aut(L)/\Inn(L)=\GL_2(p)$ by Lemma~\ref{aux}(e).
Thus $G/LN$ is isomorphic to a subgroup $W$ of $\GL_2(p)$ with $\oh p W=1$.
Write $C=\cent GZ$. Notice that $C/LN$ maps into $\SL_2(p)$ by \cite{Wi72},
and therefore this group has at most one involution. Also, $G/C$ is a cyclic
$p'$-group (because $\GL_2(p)/\SL_2(p)$ is) that acts Frobenius on $Z$.
Thus, our group $W$ has a normal subgroup that has at most one involution and
with cyclic $p'$-quotient. Also notice that $C$ is the stabiliser of $\lambda$
(and of $\eta$) in $G$. Furthermore, we claim that the stabiliser of any
$\gamma \in \Irr(C/LN)$ in $G$ has index at most 2 in $G$. This is because
$\GL_2(p)$ has a centre of order $p-1$ that intersects with $\SL_2(p)$ in its
unique subgroup of order 2. Write $\chi=\mu^G$, where $\mu \in \Irr(C)$.
\smallskip

(b.3.1)~~If $L$ is a Sylow $p$-subgroup of $G$, we claim that
$|\IBr(\chi^0)|\le p^2-1$. Let $H$ be a $p$-complement of $G$. Then $H/N$ is
isomorphic to a $p'$-subgroup of $\GL_2(p)$. By the $k(GV)$-theorem
applied to $\Gamma=(C_p \times C_p):H/N$, we have that $k(\Gamma)\le p^2$.
Since $\Gamma$ has a unique $p$-block, it follows that $k(H/N)\le p^2-1$.
Now, $|\IBr(\chi^0)| \le |\Irr(H|\theta)| \le k(H/N)$, and the claim is proved.
(The last inequality follows from Problem 11.10 of \cite{Isa}.)
\smallskip

(b.3.2)~~If $L$ is a Sylow $p$-subgroup of $G$, then $d_{\chi \vhi} \le p$:

Let $H$ be a $p$-complement of $G$. Write $C\cap H=Q$.
Also, $U=HZ \cap C$ is the unique complement of $L/Z$ in $C/Z$ up to
$C$-conjugacy. By Lemma \ref{aux}(b) (and \cite[Thm.~11.7]{Isa}), we have
that $1_L \times \theta$ has a (linear) extension $\tilde\theta \in \Irr(C)$.
Therefore $\mu=\beta \tilde\theta$, where $\beta \in \Irr(C/N)$. Notice that
$\beta$ lies over $\eta$. By \cite[Thm.~9.1]{Isa73}, we have that
$$\beta_U=\psi \beta_0 $$
for some $\beta_0 \in \Irr(U/N)$, where $\psi$ is a character of $C/N$
of degree $p$. Since $\cent{L/Z}x$ is trivial for non-trivial
$p$-regular $xN$, it follows that
$\psi(xN)=\pm 1$ for $p$-regular $Nx \ne N$. (The values of $\psi$ are
given in page 619 of \cite{Isa73}, and it can be checked that $\psi$
is the restriction of the character $\Psi$ of $\SL_2(p)$
given in Lemma~\ref{aux2}(a) under suitable identification.)
Notice that $(\beta_0)_Q \in \Irr(Q/N)$ because $Q$ is central in $U=ZQ$.
Now, let $\nu \in \Irr(Q|\theta)$. By Gallagher, we have that
$\nu=\tilde\theta_Q \tau$ for some $\tau \in \Irr(Q/N)$. Then
$$[\mu_Q, \nu]=[\beta_Q \tilde\theta_Q, \tilde\theta_Q \tau]
  =[\psi_Q (\beta_0)_Q, \tau] \,.$$
Now, by Lemma~\ref{aux2}(c) applied in $Q/N$ this number is less than $p/2$,
except if $Q/N=Q_8$ and $\tau$ is the unique character of degree~2.
In this case,  this number is less than $\frac{p+1}{2}$, and  $\tau$ extends
to $H$ because $H/Q$ is cyclic. Hence, if $\rho \in \Irr(H|\theta)$, then
$$[\chi_H, \rho]=[\mu_Q, \rho_Q] \, .$$
If $\rho_Q$ is irreducible, then we are done.
Let $\tau \in \Irr(Q|\theta)$ be under $\rho$.
We know that the stabiliser $I$ of $\tau$ in $H$ has index at most 2.
If $I=H$, then $\rho_Q$ is irreducible (because $H/Q$ is cyclic).
We conclude that $|H:I|=2$. Using that $I/Q$ is cyclic, we have that
$\rho_Q=\tau + \tau^x$, where $x \in H\setminus I$. In this case,
$$[\chi_H, \rho]=[\mu_Q, \tau + \tau^x] \le p/2  + p/2=p \,.$$
\smallskip

(b.3.3)~~We may assume that $p>3$: Else we have that $G/LN$ is a subgroup
of $\GL_2(3)$. All subgroups of $\GL_2(3)$ have trivial Schur multiplier
except for $C_2 \times C_2$, $D_8$ and $D_{12}$. By the requirements in (b3),
only $C_2 \times C_2$ can occur. But then, $L$ is a normal Sylow $p$-subgroup,
and again the theorem holds in this case.
\smallskip

(b.3.4)~~Suppose finally that $p$ divides $|W|$, and that $p>3$. Then
$\SL_2(p) \sbs W \sbs \GL_2(p)$ by Lemma~\ref{aux}(a). Now, by considering the
character $1_L \times \theta$ and using Lemma~\ref{aux}(b), we may again
assume that $\theta$ extends to $G$, and therefore that $N=1$ in this case.
Now, $C=\cent GZ$ is such that $\SL_2(p) \cong C/L \nor G/L$. By
Lemma~\ref{aux3}, we have that $\eta$ has a unique extension
$\hat\eta\in\Irr(C)$. Hence, $\mu$, the Clifford correspondent of $\chi$ over
$\eta$ is such that $\mu=\gamma \hat\eta$ for a unique $\gamma \in \Irr(C/L)$
of degree~$p$. Also, by Lemma \ref{aux3}, we know that $\hat\eta^0 = \Psi^0$.
Therefore, by Lemma~\ref{aux2}(b), we have that
$$\mu^0=d_1 \vhi_1 + \ldots + d_k \vhi_k$$
for some distinct $\vhi_i \in \IBr(C/L)$, with $k \le p$, and $d_i < p/2$,
except in the case where $p=5$. In this latter case, we still have
that $k\le p$ and that $d_i \le p/2$, except for the unique irreducible
$p$-Brauer character of degree 3, call it $\vhi_1$, which  is such that
$d_1=3$.  Now, since $G/C$ is cyclic of $p'$-order and $|\bZ(\GL_2(p))|=p-1$,
it follows that the stabiliser $T_i$ of
$\vhi_i$ in $G$ has index at most 2. Also $\vhi_i$ extends to $T_i$
(\cite[Cor.~8.12]{N}) and the irreducible constituents of $(\vhi_i)^{T_i}$
all appear with multiplicity 1, by \cite[Thm.~8.7]{N} and Gallagher's theorem
for Brauer characters \cite[Thm.~8.20]{N}. Also, they all induce irreducibly
to $G$ by the Clifford correspondence for Brauer characters.
Hence, the number of Brauer irreducible constituents of $\chi^0=(\mu^0)^G$
is less than or equal to $k\cdot (p-1) \le p^2-p$, and the decomposition
numbers are at most $2d_i \le p$, except if $p=5$ and $i=1$. 
In this case, $\vhi_1$ is $G$-invariant, because it is the
unique irreducible $p$-Brauer character of $\SL_2(5)$ of degree~3. Hence,
for $j>1$ the Brauer character $\vhi_j^G$ does not contain any irreducible
constituent of $\vhi_1^G$. So the irreducible constituents
of $\vhi_1^G$ appear in $\chi^0$ with multiplicity $3<5=p$.
\end{proof}

Now, we can finally prove the $p$-solvable case of Conjecture A.

\begin{cor}   \label{cor:p-solv}
 Suppose that $G$ is a finite $p$-solvable group and let $\chi \in \Irr(G)$ be
 such that $\chi(1)_p=|G|_p/p^2$. Then $|\IBr(\chi^0)| \le p^2-1$ and
 $d_{\chi \vhi}\le p$ for all $\vhi \in \IBr(G)$.
\end{cor}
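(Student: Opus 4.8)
The plan is to deduce the general $p$-solvable case from Theorem~\ref{thm:p-solv} by the standard Fong reductions, tracking throughout the three quantities that must be preserved: the defect $d_\chi$ (to retain the defect-two hypothesis), the count $|\IBr(\chi^0)|$, and the decomposition numbers $d_{\chi\vhi}$. Set $N=\oh{p'}{G}$, $L=\oh p{G}$, and let $\theta\in\Irr(N)$ be the constituent of $\chi_N$. If $\chi(1)_p=|G|_p$ there is nothing to prove, so $\chi$ has defect $2$ and $p^2$ divides $|G|$. The conclusions sought are exactly those of Theorem~\ref{thm:p-solv}, so the entire task is to manufacture the structural hypotheses $N\sbs\bZ(G)$ and $\cent{G}{L}\sbs LN$ without changing the relevant invariants.

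First I would apply the Fong--Reynolds correspondence relative to $N$. Since $N$ is a $p'$-group, $\theta\in\IBr(N)$ as well, and passing to the inertia group $T=I_G(\theta)$ yields $\psi\in\Irr(T\mid\theta)$ with $\psi^G=\chi$. As recalled in the introduction, $d_\psi=d_\chi=2$, and the Fong--Reynolds correspondence is realized by a Morita equivalence between the block of $\chi$ and that of $\psi$ under which induction matches ordinary and Brauer characters lying over $\theta$; in particular it preserves $|\IBr(\chi^0)|$ and all decomposition numbers. Thus we may assume $\theta$ is $G$-invariant.

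Next I would pass to a triple with central $p'$-core. Using the $p$-modular refinement of Isaacs' theory of character triples (developed in \cite{N}, building on \cite[Ch.~11]{Isa}), replace the $G$-invariant triple $(G,N,\theta)$ by an isomorphic triple $(G^*,N^*,\theta^*)$ in which $N^*$ is a central $p'$-subgroup; this carries $\chi$ to $\chi^*\in\Irr(G^*)$ of the same defect and leaves $|\IBr(\chi^0)|$ and the decomposition numbers unchanged, since the correspondence comes from a Morita equivalence of the relevant blocks. Because $G^*/N^*\cong G/N$ is $p$-solvable and $N^*$ is a central $p'$-group, $G^*$ is $p$-solvable with $\oh{p'}{G^*}=N^*\sbs\bZ(G^*)$. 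It then remains only to check the centralizer condition, and here I claim it comes for free from Hall--Higman: writing $L^*=\oh p{G^*}$ and $\bar G=G^*/N^*$, the centrality of $N^*$ gives $\oh p{\bar G}=L^*N^*/N^*$, while $\oh{p'}{\bar G}=1$, so the Hall--Higman lemma yields $\cent{\bar G}{\oh p{\bar G}}\sbs\oh p{\bar G}$. For $g\in\cent{G^*}{L^*}$ the coset $gN^*$ then centralizes $L^*N^*/N^*$, hence lies in it, so $g\in L^*N^*$; that is, $\cent{G^*}{L^*}\sbs L^*N^*$. Theorem~\ref{thm:p-solv} applied to $(G^*,\chi^*)$ now gives both bounds, which transport back through the two reductions to $G$ and $\chi$.

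The hard part will be the second reduction: one must know that the passage to a character triple with central $p'$-core can be arranged to preserve not merely the ordinary character degrees (as in the classical character-triple isomorphisms of \cite[Ch.~11]{Isa}) but the full $p$-modular picture, namely the set of Brauer constituents of $\chi^0$ and every decomposition number $d_{\chi\vhi}$. This is precisely what the block-theoretic Fong reduction in \cite{N} supplies, and it is the reason the preceding theorem was phrased in terms of the structural hypotheses $N\sbs\bZ(G)$ and $\cent{G}{L}\sbs LN$ rather than being proved directly for all $p$-solvable groups.
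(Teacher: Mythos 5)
Your proposal is correct and follows the paper's own argument essentially step for step: Fong--Reynolds (Theorem~9.14 of \cite{N}) to reduce to $G$-invariant $\theta$, ordinary/modular character triples (Problem~8.13 of \cite{N}) to replace $(G,N,\theta)$ by a triple with central $p'$-core, the Hall--Higman lemma to verify $\cent{G}{L}\sbs LN$, and then Theorem~\ref{thm:p-solv}. The only cosmetic differences are that the paper phrases the first reduction as an induction on $|G:N|$ and does not claim the correspondences arise from Morita equivalences (the weaker, standard preservation of $|\IBr(\chi^0)|$ and decomposition numbers built into these tools is what is actually used), and your explicit Hall--Higman verification merely spells out what the paper asserts in one line.
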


\begin{proof}
Write $\oh{p'}G=N$. We argue by induction on  $|G: N|$.
Let $\theta \in \Irr(N)$ be under $\chi$.

Let $T$ be the inertia group of $\theta$ in $G$ and let
$\psi \in \Irr(T|\theta)$ be the Clifford correspondent of $\chi$ over
$\theta$. Since $\psi^G=\chi$, then we know that $d_\psi=2$.
Now we apply the Fong--Reynolds Theorem 9.14 of \cite{N} to conclude that
we may assume that $\theta$ is $G$-invariant.

By using ordinary/modular character triples (see Problem 8.13 of \cite{N}), we
may replace $(G,N,\theta)$ by some other  triple $(\Gamma,M, \lambda)$,
where $M=\oh{p'}\Gamma\sbs \bZ(\Gamma)$. Hence, by working now in $\Gamma$,
it is no loss to assume  that $N \sbs \bZ(G)$.
Now, if $L=\oh pG$, then we have that $\cent GL \subseteq LN$, and we may apply
Theorem~\ref{thm:p-solv} to conclude.
\end{proof}

\section{Proof of Theorem C}

A well-known theorem by Taussky asserts that a non-abelian $2$-group $P$ has
maximal (nilpotency) class if and only if $|P/P'|=4$ (see
\cite[Satz~III.11.9]{Huppert}). In this case $P$ is a dihedral group, a
semidihedral group or a quaternion group. Of course, $|P/P'|$ is the number of
linear characters of $P$. Our next result indicates that Taussky's theorem
holds for blocks, assuming the Alperin--McKay conjecture. For this we need to:

\begin{lem}   \label{lem}
 Let $B$ be a $2$-block of $G$ with defect group $D$. Suppose that $B$
 satisfies the Alperin--McKay conjecture. Then $k_0(B)=4$ if and only if $D$ is
 dihedral (including Klein four), semidihedral, quaternion or cyclic of
 order~$4$.
\end{lem}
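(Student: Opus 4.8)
The plan is to use the Alperin--McKay hypothesis to pass to the normaliser of a defect group, where $D$ is normal and $k_0$ is governed by the inertial quotient, and then to recognise the equation $k_0(B)=4$ as the equation $|D/D'|=4$, which Taussky's theorem quoted above converts into the asserted list. First I would apply Alperin--McKay to replace $B$ by its Brauer correspondent $b$ in $\bN_G(D)$, a $2$-block with the same, now normal, defect group and with $k_0(B)=k_0(b)$. Then, exactly as in the proof of Corollary~\ref{cor:p-solv}, Fong--Reynolds together with ordinary and modular character triples reduces us to the situation $\oh{2'}G\sbs\bZ(G)$ and $\cent GD=\oh{2'}G\times\bZ(D)$. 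Since the inertial quotient $E$ of a block has odd order and $\gcd(|D|,|E|)=1$, the quotient $G/\oh{2'}G$ is a split extension $D\rtimes E$ in which $D$ is a normal Sylow $2$-subgroup and $E$ acts faithfully on $D$, and therefore faithfully on $A:=D/D'$.

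The heart of the matter is to compute $k_0(b)$ in this reduced form. Because $D$ is a Sylow $2$-subgroup the height-zero characters are exactly those of odd degree, and Clifford theory over the $2$-group $D$ shows that these are precisely the characters lying over \emph{linear} characters of $D$, that is, over $\Irr(A)$. As $|E|$ is odd while $|A|$ is a $2$-power, coprime action controls the extension of each $E$-invariant $\lambda\in\Irr(A)$ to its inertia group, and Clifford--Gallagher theory gives $k_0(b)=\sum_{\lambda\in\Irr(A)/E}z(\gamma,E_\lambda)$, where $z(\gamma,E_\lambda)$ counts the $\gamma$-twisted irreducible characters of the stabiliser and $\gamma$ is the $2$-cocycle attached to $b$. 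When $\gamma$ is trivial---in particular whenever the Schur multiplier of $E$ vanishes, as for $E\in\{1,C_3\}$---this collapses to $k_0(b)=k(A\rtimes E)$.

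For the forward implication I would read off $E$ for each $D$ on the list: it is trivial except that it may be $C_3$ when $D$ is Klein four or $Q_8$, in every case the Schur multiplier of $E$ is trivial, and $A$ is $C_4$ or $C_2\times C_2$, so $k_0(b)=k(A\rtimes E)$ equals $k(C_4)=k(C_2\times C_2)=k(\fA_4)=4$. For the converse, first assume $\gamma$ trivial, so $k_0(b)=k(A\rtimes E)=4$. If $E=1$ this says $|A|=4$. If $E\ne1$, I would count conjugacy classes of $A\rtimes E$ inside and outside $A$: those inside are the $E$-orbits on $A$, at least two since $E$ acts nontrivially, while those outside surject onto the nontrivial classes of $E$ and so number at least $k(E)-1$; hence $4\ge 2+(k(E)-1)=k(E)+1\ge4$, using $k(E)\ge3$ for a nontrivial group of odd order. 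Equality forces $k(E)=3$, so $E\cong C_3$, together with exactly two $E$-orbits on $A$; as orbit lengths divide $3$ this yields $|A|=4$. In every case $|D/D'|=4$, and Taussky's theorem (with the abelian cases $C_4$ and $C_2\times C_2$) identifies $D$ as Klein four, dihedral, semidihedral, quaternion, or cyclic of order $4$.

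The step I expect to resist is the converse when the block $b$ is \emph{not} principal, for then its central character on $\oh{2'}G$ need not be trivial and $\gamma$ may be a nonzero class in the Schur multiplier of $E$. In that case each $k(E_\lambda)$ above must be replaced by $z(\gamma,E_\lambda)$, and the term $z(\gamma,E)$ coming from $1\in\Irr(A)$ can drop as low as $1$ (when $E$ is of central type), so the clean inequality $k(E)\ge3$ is lost. What should rescue the argument is that the twist only affects the single full-stabiliser term: the stabilisers $E_\lambda$ of the nontrivial characters generating the remaining orbits are proper subgroups on which the restriction of $\gamma$ is typically trivial, so their contributions stay large and still force $|A|=4$. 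Making this precise amounts to proving the twisted inequality $2^{c}z(\gamma,E)+\Sigma'\ge4$, where $2^c=|\Irr(A)^E|$ and $\Sigma'$ is the contribution of the nontrivial orbits, with equality only for $E\cong C_3$ and $|A|=4$; this bookkeeping is, I expect, the one genuinely technical point, after which Taussky's theorem concludes as before.
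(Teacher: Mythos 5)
Your overall architecture (Alperin--McKay to pass to the Brauer correspondent with $D$ normal, reduction to $\oh{2'}G$ central so that the block is governed by $D\rtimes E$ with a K\"ulshammer--Puig cocycle $\gamma$, counting height-zero characters over $\Irr(D/D')$, and Taussky to finish) is sound and is in fact the same general strategy as the paper, which via \cite{KuelshammerRemark} passes to a block $\overline{B}$ of $G/D'$ with abelian defect group $\overline{D}=D/D'$ and $k(\overline{B})=k_0(B)=4$. Your forward direction and your untwisted converse are correct (the paper gets the forward direction unconditionally from Brauer--Olsson, without needing AM). But the converse has a genuine gap exactly where you flag it, and that gap is the mathematical heart of the lemma, not bookkeeping. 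Your hope that the restriction of $\gamma$ to the proper stabilisers $E_\lambda$ is ``typically trivial'' is unsupported: nothing you establish excludes stabilisers of central type, and you have no a priori lower bound on $z(\gamma,E)$, so two configurations survive your analysis. First, four $E$-orbits on $\Irr(A)$ each contributing $z=1$, which requires $E$ and all stabilisers to be of central type (note $z(\gamma,E)=l(\overline{B})$, so this is the statement $l(\overline{B})=1$). Second, the transitive case with contributions $(1,3)$, $(3,1)$ or $(2,2)$ and $|A|=2^d$ arbitrarily large; here one must actually control which odd-order groups can act transitively on $\Irr(A)\setminus\{1\}$ and what their point stabilisers look like, and generic position arguments do not do this.

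The paper closes precisely these two holes with specific outside inputs that your proposal lacks. K\"ulshammer's theorem on small blocks \cite{Kk4} gives $l(\overline{B})\ge 2$ (killing the fully twisted four-orbit configuration) and at most $3$ subsections; a parity count of $I$-orbit lengths on the $2$-group $\overline{D}$ then forces exactly two orbits, so $\overline{D}$ is elementary abelian and $\overline{D}\rtimes I$ is a solvable $2$-transitive group. Huppert's classification \cite{Hupperttlg} now forces $I\le\operatorname{\Gamma L}_1(2^d)$, whence the stabiliser $C_I(x_2)$ of a nontrivial element is cyclic (it meets the semiregular normal subgroup $N\le C_{2^d-1}$ trivially and embeds in $I/N\le C_d$); cyclic stabilisers have trivial Schur multiplier, so $l(b_2)=|C_I(x_2)|$ is odd, hence $1$, so $I$ acts regularly, all Sylow subgroups of $I$ are cyclic, the K\"ulshammer--Puig class of $\overline{B}$ is trivial, and $l(\overline{B})=k(I)>3$ contradicts $l(\overline{B})=4-l(b_2)=3$. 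In short: your ``twisted inequality'' is exactly where the proof lives, and completing it along your lines would require importing the analogues of \cite{Kk4} and \cite{Hupperttlg}; without them the argument does not go through.
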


\begin{proof}
If $D$ is one of the listed groups, then $k_0(B)=4$ by work of Brauer 
and Olsson (see \cite[Thm.~8.1]{Sambale}). 
Now suppose conversely that $k_0(B)=4$.
Since $B$ satisfies the Alperin--McKay conjecture, we may assume that
$D\unlhd G$. By Reynolds~\cite{Reynolds}, we may also assume that $D$ is a
Sylow $2$-subgroup of $G$. By \cite[Thm.~6]{KuelshammerRemark}, $B$ dominates
a block $\overline{B}$ of $G/D'$ with defect group $\overline{D}:=D/D'$ and
$k(\overline{B})=k_0(B)=4$. Using Taussky's theorem, it suffices to show
$|\overline{D}|=4$.

By way of contradiction, suppose that $2^d:=|\overline{D}|>4$. Let
$(1,\overline{B})=(x_1,b_1),\ldots,(x_r,b_r)$ be a set of representatives for
the conjugacy classes of $\overline{B}$-subsections. Then
\[\sum_{i=1}^r{l(b_i)}=k(\overline{B})=4.\]
By \cite[Thm.~A]{Kk4}, we have $l(\overline{B})\ge 2$ and $r\le 3$. Let $I$
be the inertial quotient of $\overline{B}$. Then $I$ has odd order and so is
solvable by Feit--Thompson. The case $r=3$ is impossible, since $2^d$ is the
sum of $I$-orbit lengths. Hence, $r=2$ and $\overline{D}$ is elementary
abelian.

Since $\overline{D}\rtimes I$ is a solvable $2$-transitive group, Huppert
\cite{Hupperttlg} implies that $I$ lies in the semilinear group
$\operatorname{\Gamma L}_1(2^d)\cong C_{2^d-1}\rtimes C_d$. Let $N\unlhd I$
with $N\le C_{2^d-1}$ and $I/N\le C_d$. Then $N$ acts semiregularly on
$\overline{D}\setminus\{1\}$. Hence, $C_I(x_2)$ is cyclic (as a subgroup of
$I/N$). On the other hand, $C_I(x_2)$ is the inertial quotient of $b_2$. It
follows that $l(b_2)=|C_I(x)|$ is odd and therefore $l(b_2)=1$. Consequently,
$I$ acts regularly on $\overline{D}\setminus\{1\}$. This implies that all Sylow
subgroups of $I$ are cyclic. Hence, the K\"ulshammer--Puig class of
$\overline{B}$ is trivial and we obtain the contradiction
$l(\overline{B})=k(I)>3$.
\end{proof}

Our next result includes the first part of Theorem~C.

\begin{thm}   \label{thm:p2}
 Let $B$ be a $2$-block of $G$ satisfying the Alperin--McKay conjecture. If
 there exists a character $\chi\in\Irr(B)$ such that $\chi(1)_2=|G|_2/4$, then
 $\lvert\IBr(\chi^0)\rvert\le l(B)\le 3$ and $d_{\chi\phi}\le 2$ for all
 $\phi\in\IBr(B)$.
\end{thm}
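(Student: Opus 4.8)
The plan is to pin down the defect group $D$ of $B$ and then read off $l(B)$ and the decomposition numbers from the known description of $2$-blocks with such a defect group. First note that $\IBr(\chi^0)\subseteq\IBr(B)$ gives $|\IBr(\chi^0)|\le l(B)$ at once, so everything reduces to proving $l(B)\le 3$ and $d_{\chi\phi}\le 2$. The hypothesis $\chi(1)_2=|G|_2/4$ says precisely that $\chi$ has defect $2$; writing $d$ for the defect of $B$ and $h$ for the height of $\chi$, this means $d-h=2$, so $d\ge 2$.

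I would begin with the usual block-theoretic reductions, as in Corollary~\ref{cor:p-solv} and Lemma~\ref{lem}. The Fong--Reynolds correspondence together with ordinary/modular character triples lets one assume $\oh{2'}G\sbs\bZ(G)$; this correspondence preserves defects, $l(B)$ and the decomposition numbers, so a character of defect $2$ still exists after the reduction. Using the Alperin--McKay hypothesis and Reynolds' theorem exactly as in Lemma~\ref{lem}, I would then reduce to the case in which $D\nor G$ is a Sylow $2$-subgroup; this is the step that genuinely consumes the Alperin--McKay assumption.

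The heart of the argument is then a one-line degree count. Let $\la\in\Irr(D)$ lie under $\chi$. Since $G/D$ has odd order, the $2$-part of $\chi(1)$ equals $\la(1)$, and defect $2$ forces $\la(1)=|D|/4$. On the other hand $\la(1)^2\le |D:\bZ(D)|<|D|$ because $\bZ(D)\ne1$, so $|D|\le 8$, and as $\la(1)=|D|/4\ge1$ we get $|D|\in\{4,8\}$. When $|D|=4$ the group $D$ is abelian, namely $C_4$ or $C_2\times C_2$; when $|D|=8$ the existence of the degree-$2$ character $\la$ rules out abelian $D$, leaving $D_8$ or $Q_8$. In particular $k_0(B)=4$ in every case, which is consistent with (and can alternatively be obtained from) Lemma~\ref{lem}.

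Finally I would invoke the classification of these blocks. For $D$ cyclic of order $4$ the inertial index divides $p-1=1$, so $l(B)=1$ and every $d_{\chi\phi}\le1$. For $D=C_2\times C_2$ the results of Brauer and Erdmann give $l(B)\in\{1,3\}$ with all decomposition numbers in $\{0,1\}$. For $D$ dihedral or quaternion, Erdmann's determination of the tame blocks yields $l(B)\le 3$ and all decomposition numbers at most $2$. Combined with the first paragraph this gives $|\IBr(\chi^0)|\le l(B)\le 3$ and $d_{\chi\phi}\le 2$. I expect the main obstacle to be the reduction in the second paragraph: one must arrange that $D$ becomes a normal Sylow $2$-subgroup while simultaneously keeping track of $l(B)$, the decomposition numbers, and a character of defect exactly $2$ (whose height is typically positive, so that it is invisible to the Alperin--McKay count of height-zero characters); making this bookkeeping rigorous, rather than the subsequent degree count, is where the real work lies.
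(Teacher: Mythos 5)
Your first paragraph and your final appeal to Erdmann's classification are fine, but the reduction in your second paragraph --- which you yourself flag as the real work --- is not merely delicate bookkeeping: it is impossible, and the degree count it feeds proves something false. The Alperin--McKay conjecture asserts only $k_0(B)=k_0(b)$ for the Brauer correspondent $b$ of $B$ in $\bN_G(D)$; it preserves neither $l(B)$, nor decomposition numbers, nor the existence of a character of height $d-2$ (such a character has positive height as soon as $d\ge3$, so it is invisible to the height-zero count, exactly as you worried). Concretely, the principal $2$-block of $\SL_2(q)$, $q$ odd, has quaternion defect group of arbitrarily large order $2^d$ and contains a character with $\chi(1)_2=|G|_2/4$ (see the Remark following this theorem in the paper); its local correspondent with normal defect group contains no such character, and your computation $\la(1)=|D|/4$, $\la(1)^2\le|D:\bZ(D)|<|D|$, hence $|D|\le 8$, contradicts these examples. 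A symptom of the same error: your final case list omits semidihedral defect groups (which occur, e.g.\ for $\GL_2(q)$ with $q\equiv3\pmod{4}$), since your bogus bound $|D|\le8$ excludes them.

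The missing ingredient is Landrock's unconditional theorem (cited in the paper from \cite[Prop.~1.31]{Sambale}): a $2$-block containing a character of height $d-2$ has $k_0(B)=4$. This is how the paper argues: Landrock yields $k_0(B)=4$ for $B$ itself; Lemma~\ref{lem} --- in which Alperin--McKay is used only to transport the single numerical invariant $k_0$ to the normal-defect-group situation, never $l(B)$ or decomposition numbers --- then pins down $D$ as dihedral (including Klein four), semidihedral, quaternion, or cyclic of order~$4$. Since the isomorphism type of $D$ is an invariant of $B$ itself, one may then quote results directly for $B$: for defect~$2$ the known Morita equivalences with the principal blocks of $D$, $\fA_4$ or $\fA_5$, and for defect at least~$3$ the Brauer--Olsson bound $l(B)\le3$ together with Erdmann's tables for tame blocks, which give $d_{\chi\phi}\le2$. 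Your plan becomes correct once you replace the attempted transfer of the entire problem to $\bN_G(D)$ by this local-to-global use of $k_0$ alone.
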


\begin{proof}
By a result of Landrock (see \cite[Prop.~1.31]{Sambale}), $k_0(B)=4$. Hence,
Lemma~\ref{lem} applies. If $B$ has defect $2$, then $B$ is Morita equivalent
to the principal block of a defect group $D$ of $B$, of $\fA_4$ or $\fA_5$.
The claim follows easily in this case. Thus, we may assume that $B$ has defect
at least $3$. Then by work of Brauer and Olsson (see \cite[Thm.~8.1]{Sambale}),
$l(B)\le 3$. The claim about the decomposition numbers follows from the tables
at the end of \cite{Erdmann}.
\end{proof}

\begin{rem}
For every defect $d\ge 2$ there are $2$-blocks with defect $d$ containing an
irreducible character $\chi$ such that $\chi(1)_2=|G|_2/4$. This is clear for
$d=2$ and for $d\ge 3$ one can take the principal block of $\SL_2(q)$ where $q$
is a suitable odd prime power. These blocks have quaternion defect groups.
Similarly, the principal $2$-block of $\GL_2(q)$ where $q\equiv 3\pmod{4}$
gives an example with semidihedral defect group. On the other hand, Brauer
showed that there are no examples with dihedral defect group of
order at least $16$ (see \cite[Theorem~8.1]{Sambale}).
\end{rem}

In order to say something about odd primes, we need to invoke a stronger
conjecture known as Robinson's \emph{ordinary weight conjecture} (see
\cite[Conj.~2.7]{Sambale}). 
Robinson gave the following consequence of his conjecture
which is relevant to our work.

\begin{lem}[{\cite[Lemma~4.7]{Robinsonmetac}}]   \label{robinson}
 Let $B$ be a $p$-block of $G$ with defect group $D$ satisfying the ordinary
 weight conjecture.
 Assume that there exists $\chi\in\Irr(B)$ such that $\chi(1)_p=|G|_p/p^2$.
 Then $|D|=p^2$ or $D$ has maximal class. Let $\cF$ be the fusion
 system of $B$. If $|D|\ge p^4$, then $D$ contains an $\cF$-radical,
 $\cF$-centric subgroup $Q$ of order $p^3$ such that
 $\SL_2(p)\le\Out_{\cF}(Q)\le\GL_2(p)$.
 In particular, $Qd(p)$ is involved in $G$. If $p=2$, then $Q\cong Q_8$ and if
 $p>2$, then $Q$ is the extraspecial group $p^{1+2}_+$ with exponent $p$.

 Conversely, if $B$ is any block (satisfying the ordinary weight conjecture)
 with an $\cF$-radical, $\cF$-centric subgroup $Q$ as above,
 then $\Irr(B)$ contains a character $\chi$ with $\chi(1)_p=|G|_p/p^2$.
\end{lem}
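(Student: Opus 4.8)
The plan is to use Robinson's ordinary weight conjecture (OWC) itself as the engine. It expresses the number $k_d(B)$ of $\chi\in\Irr(B)$ of defect $d_\chi=d$ as a sum of non-negative local weights
$$k_d(B)=\sum_Q\ \sum_{\bar\chi}\ \bigl|\{\psi:\ d(\psi)=d-d_Q(\bar\chi)\}\bigr|,$$
where $Q$ runs over $\cF$-conjugacy classes of $\cF$-centric, $\cF$-radical subgroups of $D$, $\bar\chi$ over $\Out_\cF(Q)$-orbit representatives in $\Irr(Q)$, $d_Q(\bar\chi)=\log_p|Q|-\log_p\bar\chi(1)$ is the defect of $\bar\chi$ as a character of the $p$-group $Q$, and $\psi$ over the (K\"ulshammer--Puig twisted) irreducible characters of the stabiliser $\Out_\cF(Q)_{\bar\chi}$ of the indicated $p$-defect. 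Since $\chi$ has defect $2$ exactly when $\chi(1)_p=|G|_p/p^2$, the hypothesis is that $k_2(B)>0$, and the whole argument is a careful bookkeeping of which $Q$ can contribute to $k_2(B)$.

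First I would extract a degree bound. A term contributes to $k_2(B)$ only if $d_Q(\bar\chi)+d(\psi)=2$ with both summands non-negative, so $d_Q(\bar\chi)\le2$, i.e.\ $\bar\chi(1)\ge|Q|/p^2$. Feeding this into $\bar\chi(1)^2\le[Q:\bZ(Q)]\le|Q|/p$ yields $|Q|\le p^3$: every subgroup capable of carrying a defect-$2$ character has order at most $p^3$. Moreover an order-$p^3$ contributor must be extraspecial (for abelian $Q$ of order $p^3$ one has $d_Q(\bar\chi)=3$), an order-$p^2$ contributor is elementary abelian (a cyclic $Q$ is non-radical), and order $p$ forces $D=C_p$. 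Since a fully centralised $\cF$-centric $Q$ satisfies $\bZ(D)\le C_D(Q)=\bZ(Q)$, this already gives $|\bZ(D)|\le p^2$, with $|\bZ(D)|=p$ whenever an extraspecial contributor occurs. If $D$ is abelian then $Q=D$ is its only $\cF$-centric subgroup and contributes to $k_2(B)$ only for $|D|=p^2$, which is the first alternative. For non-abelian $D$ the normaliser of an order-$p^2$ contributor turns out (by the next step) to be extraspecial of order $p^3$, so I may assume an extraspecial contributor $Q$ of order $p^3$ is present once $|D|>p^2$.

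The self-contained heart is pinning down $Q$ and $\Out_\cF(Q)$, which simultaneously yields the converse. Choose such a $Q$ fully normalised. When $|D|\ge p^4$ we have $Q<D$, so $\Out_D(Q)=N_D(Q)/Q$ is a non-trivial $p$-subgroup of $\Out_\cF(Q)\le\Out(Q)$; hence $p\mid|\Out_\cF(Q)|$, while $O_p(\Out_\cF(Q))=1$ because $Q$ is $\cF$-radical. Were $Q$ of exponent $p^2$, Winter's determination of $\Aut(Q)$ (cf.\ Lemma~\ref{aux}(e) and its proof) would give $\Out(Q)$ a normal Sylow $p$-subgroup, incompatible with an element of order $p$ and trivial $O_p$; so $Q$ has exponent $p$ (and $Q\cong Q_8$ when $p=2$) and $\Out(Q)\cong\GL_2(p)$ by Lemma~\ref{aux}(e). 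Now Lemma~\ref{aux}(a) applies to $\Out_\cF(Q)\le\GL_2(p)$: order divisible by $p$ together with $O_p=1$ force $\SL_2(p)\le\Out_\cF(Q)\le\GL_2(p)$, so $Qd(p)$ is involved in $G$. For the converse I would run the weight backwards: a faithful $\bar\chi=\eta\in\Irr(Q)$ of degree $p$ has $d_Q(\eta)=2$ and, since $\SL_2(p)$ acts trivially on $\bZ(Q)$ (the action on $\bZ(Q)$ is by the determinant), stabiliser $\Out_\cF(Q)_\eta=\Out_\cF(Q)\cap\SL_2(p)=\SL_2(p)$, whose Steinberg character has defect $0$; this term contributes to $k_2(B)$ and so produces the required $\chi$.

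Finally, for the maximal-class assertion, the $p$-part $|\Out_\cF(Q)|_p=|\GL_2(p)|_p=p$ gives $|N_D(Q)/Q|=p$, so $R:=N_D(Q)$ has order $p^4$; the generator of $R/Q$ acts on $Q/\bZ(Q)$ as a transvection in $\SL_2(p)$, whence $\bZ(R)=\bZ(Q)$ has order $p$ and $R$ is of maximal class. If $Q\trianglelefteq D$ this already yields $|D|=p^4$ of maximal class. The hard part will be the case $|D|>p^4$, where $Q$ is not normal and one must propagate maximal class from the local section $R$ up to all of $D$. I would attack this through Blackburn's theory of $p$-groups of maximal class, aiming to produce an element $x\in Q\setminus\bZ(Q)$ with $|C_D(x)|=p^2$ (a two-step-centraliser condition characterising maximal class) and using $\cF$-centrality of $Q$ to control $C_D(x)$; establishing this control for arbitrarily large $D$ is exactly where I expect the real difficulty — and the genuine $p$-group theory — to lie.
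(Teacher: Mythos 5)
The paper offers no proof of this statement at all: it is quoted verbatim from Robinson \cite[Lemma~4.7]{Robinsonmetac}, so your reconstruction has to stand on its own. Its fatal flaw is the ``engine'' itself. You state the ordinary weight conjecture as a sum of \emph{non-negative} local weights in which $\psi$ may have any defect $d-d_Q(\bar\chi)$; in fact the conjecture (see \cite[Conj.~2.7]{Sambale}) expresses $k_d(B)$ as an \emph{alternating} sum over $\Out_\cF(Q)$-classes of normal chains of $p$-subgroups of $\Out_\cF(Q)$, the $\psi$'s being $p$-defect-\emph{zero} characters of K\"ulshammer--Puig-twisted algebras of chain stabilisers. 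This is not cosmetic: with your simplified engine the lemma is false. Take $B$ with dihedral defect group $D$ of order at least~$16$ (e.g.\ the principal $2$-block of $\PSL_2(q)$): the Klein four subgroups $Q\le D$ are $\cF$-centric and $\cF$-radical with $\Out_\cF(Q)\cong\fS_3$, every $\bar\chi\in\Irr(Q)$ has $d_Q(\bar\chi)=2$, and $\fS_3$ has a character of $2$-defect zero, so your formula would yield $k_2(B)>0$ --- yet $D$ contains no $Q_8$ and in fact $k_2(B)=0$ (Brauer; compare the remark following Theorem~\ref{thm:p2}). What rescues the true statement is exactly the chain cancellation: for $Q\cong C_2\times C_2$ the trivial chain contributes $+1$ and the chain $1<C_2$ contributes $-1$, so $w_Q(2)=0$. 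Consequently your reduction ``I may assume an extraspecial contributor $Q$ of order $p^3$ is present once $|D|>p^2$'' is unjustified --- ruling out elementary abelian contributors of order $p^2$, which is precisely what the $|D|\ge p^4$ assertion needs, is a cancellation computation you never perform, and replacing such a $Q$ by its normaliser $N_D(Q)$ does not help, since nothing makes $N_D(Q)$ an $\cF$-radical, $\cF$-centric subgroup. The same defect infects your converse: ``this term contributes to $k_2(B)$'' must be checked against the alternating sum (here it does survive: the faithful characters of $Q$ have stabiliser containing $\SL_2(p)$, whose Steinberg character has defect zero, while the Borel subgroup stabilising the chain $1<C_p$ has none) and against possible negative weights from other subgroups.

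Secondly, you concede the maximal-class assertion for $|D|>p^4$ --- one of the lemma's main claims --- saying only that you would ``attack'' it via Blackburn's theory and the criterion $|C_D(x)|=p^2$; producing such an $x$ from $\cF$-centricity of $Q$ is genuine work, not a routine verification, so the proposal is incomplete here by its own admission. To give due credit: the middle portion is essentially sound and in the spirit of the paper's Section~2 --- the bound $|Q|\le p^3$ from $d_Q(\bar\chi)\le 2$ and $\bar\chi(1)^2\le|Q:\bZ(Q)|$, the exclusion of exponent-$p^2$ extraspecial groups via Winter's description of $\Aut(Q)$ (Lemma~\ref{aux}(e)), and the step from $p\mid|\Out_\cF(Q)|$, $\oh p{\Out_\cF(Q)}=1$ to $\SL_2(p)\le\Out_\cF(Q)\le\GL_2(p)$ via Lemma~\ref{aux}(a) are all correct. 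But since both the elimination of the order-$p^2$ case, the converse, and the maximal-class claim rest on machinery you mis-stated or postponed, the attempt does not constitute a proof of the lemma.
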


Observe that a $p$-group $P$ of order $|P|\ge p^3$ has maximal class if and
only if there exists $x\in P$ with $|C_P(x)|=p^2$ (see
\cite[Satz~III.14.23]{Huppert}). We will verify that latter condition in a
special case in Proposition~\ref{prop:smallcent}.

Lemma~\ref{robinson} implies for instance that the group
$p^{1+2}_+\rtimes\SL_2(p)$ contains irreducible characters $\chi$ with
$\chi(1)_p=p^2$. Hence, for every prime $p$ there are $p$-blocks of defect $4$
with characters of defect $2$. As another consequence we conditionally extend
Landrock's result mentioned in the proof of Theorem~\ref{thm:p2} to odd primes.

\begin{prop}   \label{propk0}
 Let $B$ be a $p$-block of $G$ satisfying the ordinary weight conjecture.
 If there exists $\chi\in\Irr(B)$ such that $\chi(1)_p=|G|_p/p^2$, then
 $k_0(B)\le p^2$.
\end{prop}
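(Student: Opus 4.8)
The plan is to use the local structure supplied by Lemma~\ref{robinson} to reduce to a block whose defect group has order $p^2$, and then to quote the (known) case of Brauer's $k(B)$-conjecture for such defect groups.

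First I would apply Lemma~\ref{robinson} to $B$. By hypothesis $B$ satisfies the ordinary weight conjecture and contains a character $\chi$ with $\chi(1)_p=|G|_p/p^2$, so the lemma tells us that the defect group $D$ of $B$ satisfies $|D|=p^2$ or $D$ has maximal class. In either case $|D/D'|=p^2$: every group of order $p^2$ is abelian, and a $p$-group of maximal class of order $p^n$ satisfies $|D:D'|=p^2$.

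Next I would carry out the same reductions as in the proof of Lemma~\ref{lem}. Since the ordinary weight conjecture implies the Alperin--McKay conjecture, replacing $G$ by $\bN_G(D)$ leaves $D$ and $k_0(B)$ unchanged, so we may assume $D\nor G$; by Reynolds~\cite{Reynolds} we may further assume $D\in\Syl_p(G)$. Then \cite[Thm.~6]{KuelshammerRemark} shows that $B$ dominates a block $\overline B$ of $G/D'$ with defect group $\overline D:=D/D'$, of order $p^2$, and with $k(\overline B)=k_0(B)$.

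Finally, since $\overline D$ has order $p^2$, Brauer's $k(B)$-conjecture holds for $\overline B$---it is known for defect groups of order $p^2$ (see \cite{Sambale})---so that $k(\overline B)\le|\overline D|=p^2$. Combining, $k_0(B)=k(\overline B)\le p^2$, as desired. The one point that requires care is the very first reduction: one must make sure that the structural conclusion of Lemma~\ref{robinson} (obtained for $B$ in $G$, where the character $\chi$ lives) is transported correctly through the passage to $\bN_G(D)/D'$, and that the domination equality $k_0(B)=k(\overline B)$ is available in exactly this normal, Sylow situation; both are handled precisely as in Lemma~\ref{lem}, and once the defect group of $\overline B$ has order $p^2$ the remaining bound is a routine appeal to a settled case of the $k(B)$-conjecture.
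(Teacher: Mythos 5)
Your skeleton matches the paper's proof almost exactly: the paper likewise applies Lemma~\ref{robinson} to conclude that $D$ has order $p^2$ or maximal class, hence $|D/D'|=p^2$, and then finishes in one line by quoting \cite{KuelshammerRemark} for the implication ``Alperin--McKay (which the ordinary weight conjecture implies blockwise) $\Rightarrow$ Olsson's conjecture $k_0(B)\le|D/D'|$''. Your chain of reductions --- pass to $\bN_G(D)$ via Alperin--McKay, make $D$ a Sylow subgroup via Reynolds \cite{Reynolds}, then dominate to a block $\overline{B}$ of $G/D'$ with $k(\overline{B})=k_0(B)$ via \cite[Thm.~6]{KuelshammerRemark} --- is in substance the proof of that quoted implication, so up to the final step you are re-deriving the paper's black box rather than taking a different route.

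The final step, however, contains a genuine error: Brauer's $k(B)$-conjecture is \emph{not} a known theorem for arbitrary blocks with defect group of order $p^2$. Even Olsson's conjecture is open in general for $D\cong C_p\times C_p$ when $p\ge5$; only partial cases are settled (small primes via the classification, and small inertial index via the Usami--Puig perfect isometries). Indeed, if $k(B)\le p^2$ were known for all defect-$2$ blocks, this proposition --- and several other conditional statements of the paper --- would largely not need the ordinary weight conjecture at all, which should have been a warning sign. Fortunately the gap is repairable from inside your own reduction: Reynolds's reduction passes to a subgroup of $G$ containing $D$, so $D$ stays normal as well as Sylow; hence $\overline{D}=D/D'$ is a normal abelian Sylow $p$-subgroup of $\overline{G}:=G/D'$, so $\overline{G}$ is $p$-solvable, and for $p$-solvable groups the $k(B)$-conjecture is a theorem (a consequence of the $k(GV)$-theorem, which the paper itself invokes elsewhere). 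This yields $k(\overline{B})\le|\overline{D}|=p^2$ unconditionally in your reduced situation. With that citation corrected, your argument is valid --- in fact it is precisely K\"ulshammer's published proof of ``Alperin--McKay $\Rightarrow$ Olsson'', which the paper simply cites.
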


\begin{proof}
By Lemma~\ref{robinson}, a defect group $D$ of $B$ has maximal class or
$|D|=p^2$. In any case $|D/D'|=p^2$. The ordinary weight conjecture implies the
Alperin--McKay conjecture (blockwise) and by \cite{KuelshammerRemark}, the
Alperin--McKay conjecture implies Olsson's conjecture $k_0(B)\le|D/D'|=p^2$.
\end{proof}

If we also assume the Eaton-Moret\'o conjecture~\cite{EatonMoreto} for $B$, it
follows that $k_1(B)>0$ in the situation of Proposition~\ref{propk0}. This is
because a $p$-group $P$ of maximal class has a (unique) normal subgroup $N$
such that $P/N$ is non-abelian of order $p^3$. Hence, $P$ has an irreducible
character of degree $p$.

\begin{prop}   \label{psolv}
 Let $B$ be a $p$-block of a $p$-solvable group $G$ with $\chi\in\Irr(B)$ such
 that $\chi(1)_p=|G|_p/p^2$. Then one of the following holds:
 \begin{enumerate}[\rm(1)]
  \item $B$ has defect $2$ or $3$.
  \item $p=2=l(B)$ and $B$ has defect group $Q_{16}$ or $SD_{16}$. Both cases
   occur.
  \item $p=3$ and $B$ has defect group
   \textnormal{\texttt{SmallGroup}}$(3^4,a)$ with $a\in\{7,8,9\}$. All three
   cases occur.
 \end{enumerate}
\end{prop}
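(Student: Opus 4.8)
The plan is to reduce to the situation of Theorem~\ref{thm:p-solv} and read off the possible defect groups from the structural case division carried out in its proof, taking care that the reduction preserves the isomorphism type of a defect group $D$ of $B$.

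First I would run the reductions of Corollary~\ref{cor:p-solv}: using Fong--Reynolds and a block-preserving ordinary/modular character triple isomorphism (Problem~8.13 of \cite{N}), replace $(G,B,\chi)$ by $(\Gamma,\bar B,\bar\chi)$ with $\Gamma$ again $p$-solvable, $\oh{p'}\Gamma\sbs\bZ(\Gamma)$, $L:=\oh p\Gamma$, $\cent\Gamma L\sbs L\oh{p'}\Gamma$ and $\bar\chi(1)_p=|\Gamma|_p/p^2$; these steps send a defect group of $B$ to an isomorphic defect group of $\bar B$, so it suffices to analyse $\Gamma$. Since $D$ lies in a Sylow $p$-subgroup, I bound $|\Gamma|_p$ through the cases of the proof of Theorem~\ref{thm:p-solv}. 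In every case except the extraspecial one with $p\mid|\Gamma/L\oh{p'}\Gamma|$ one gets $|\Gamma|_p\le p^3$, hence $2\le d\le3$ and conclusion~(1) holds. In the exceptional case $L$ is extraspecial of order $p^3$ and, by Lemma~\ref{aux}(a),(e) (with $\Out(Q_8)\cong\GL_2(2)$ for $p=2$), $\SL_2(p)\sbs\Gamma/L\oh{p'}\Gamma\sbs\GL_2(p)$, so $|\Gamma|_p=p^4$ and $|D|\le p^4$.

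The crux is that $\SL_2(p)$ is now a section of the $p$-solvable group $\Gamma$, hence $p$-solvable; since $\PSL_2(p)$ is a nonabelian simple group of order divisible by $p$ for $p\ge5$, this forces $p\le3$. Thus for $p\ge5$ the exceptional case is impossible and only~(1) occurs. For $p\in\{2,3\}$ with $|D|=p^4$ I would show $B$ has full defect, i.e.\ $D\in\Syl_p(\Gamma)$: for $p=3$, Lemma~\ref{aux3} gives $\hat\eta^0=\alpha^0+\beta^0$ with $\alpha\in\Irr(\Gamma/L)$ linear (degree $(p-1)/2=1$), so $\bar\chi=\gamma\hat\eta$ is linked to the defect-$4$ character $\alpha$ and lies in a block of defect $4$ (equivalently, when $\oh{p'}\Gamma=1$ one has $l(\Gamma)=3$ with the three Brauer characters all linked, so $\Gamma$ has a single, full-defect block); for $p=2$, with $L=Q_8$, the reduced $\Gamma$ is $\GL_2(3)$ or $\texttt{SmallGroup}(48,28)$, each with a unique full-defect $2$-block. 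Hence $D\in\Syl_p(\Gamma)$ is a maximal class group of order $p^4$ containing the $\cF$-centric radical $p^{1+2}_+$ (resp.\ $Q_8$); a direct check identifies it as $Q_{16}$ or $SD_{16}$ for $p=2$, and as $\texttt{SmallGroup}(3^4,a)$ with $a\in\{7,8,9\}$ for $p=3$, and gives $l(B)=2$ in~(2).

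To see that all listed possibilities occur, I would exhibit $\GL_2(3)$ and $\texttt{SmallGroup}(48,28)$ for $SD_{16}$ and $Q_{16}$ in~(2), and suitable extensions built on $3^{1+2}_+\rtimes\SL_2(3)$ and $3^{1+2}_+\rtimes\GL_2(3)$ realising each $a\in\{7,8,9\}$ in~(3). I expect the main obstacle to be this last, computational step---determining precisely which maximal class groups of order $81$ arise (and that exactly the identifiers $7,8,9$ do, each realised) and confirming $l(B)=2$ in the boundary cases; the conceptual part, namely $|D|\le p^4$ with equality forcing $\SL_2(p)$ and hence $p\le3$ to be involved, is clean. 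Lemma~\ref{robinson} provides a cross-check, giving the same ``$|D|=p^2$ or maximal class'' dichotomy and the same involvement of $Qd(p)$.
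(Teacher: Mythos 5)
Your argument is essentially correct but follows a genuinely different route from the paper. The paper works top-down with block-theoretic machinery: Haggarty's theorem \cite{Haggarty} gives $d(B)\le 4$, with $p\le 3$ when $d(B)=4$; since the ordinary weight conjecture is known for $p$-solvable groups, Lemma~\ref{robinson} applies unconditionally and forces $D$ to have maximal class and to contain an $\cF$-radical, $\cF$-centric $Q_8$ resp.\ $3^{1+2}_+$, whence $D\in\{Q_{16},SD_{16}\}$ for $p=2$ (with $l(B)=2$ because fusion systems of blocks of $p$-solvable groups are constrained, combined with \cite[Thm.~8.1]{Sambale}), while for $p=3$ a GAP check of the four maximal class groups of order $81$ eliminates $\texttt{SmallGroup}(3^4,10)$, which has no extraspecial exponent-$3$ subgroup; the examples are the two double covers of $\fS_4$ and $\texttt{SmallGroup}(3^4\cdot8,b)$, $b\in\{531,532,533\}$. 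You instead rerun the reduction of Corollary~\ref{cor:p-solv} and read $|D|$ off the case division in the proof of Theorem~\ref{thm:p-solv}; your substitute for Haggarty --- that $|\Gamma|_p=p^4$ forces an $\SL_2(p)$-section, which in a $p$-solvable group forces $p\le3$ --- is clean and self-contained, and it produces the extraspecial subgroup inside $D$ structurally rather than via the weight conjecture. Your endgame (Sylow $2$-subgroups $SD_{16}$, $Q_{16}$ of $\GL_2(3)$ and $\texttt{SmallGroup}(48,28)$; Sylow $3$-subgroups of order $81$; $l(B)=2$) is exactly the computation the paper delegates to GAP, so flagging it as the residual work is fair. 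Note only that the identifiers $a\in\{7,8,9\}$ are realised by the three \emph{extensions} (split and non-split) of $\SL_2(3)$ by $3^{1+2}_+$, so the split groups $3^{1+2}_+\rtimes\SL_2(3)$ and $3^{1+2}_+\rtimes\GL_2(3)$ alone cannot account for all three cases, as you half-anticipate.

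Two repairable soft spots. First, the character-triple step: Problem~8.13 of \cite{N} preserves degree ratios and decomposition numbers, hence block linkage and character defects, but by itself says nothing about defect groups; the preservation you assert should be justified by Fong's theorem for $p$-solvable groups (after Fong--Reynolds the covered character of $\oh{p'}{G}$ is invariant, so the block has full defect), together with the observation that Sylow $p$-subgroups of $G$ and $\Gamma$ are both isomorphic to those of $G/N\cong\Gamma/M$. This also makes your ad hoc full-defect arguments unnecessary: in the reduced exceptional configurations with $N=1$ one has $\cent{\Gamma}{L}=\bZ(L)\le L$, so $\Gamma$ has a unique $p$-block, necessarily of full defect. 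Second, you invoke Lemma~\ref{aux3} at $p=3$, but that lemma is stated only for $p\ge5$; your parenthetical unique-block argument is the one to keep at that point.
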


\begin{proof}
By Haggarty~\cite{Haggarty}, $B$ has defect at most $4$. Moreover, if $B$ has
defect $4$, then $p\le3$. Let $D$ be a defect group of $B$.
Since the ordinary weight conjecture holds for $p$-solvable groups,
Lemma~\ref{robinson} implies that $D$ has maximal class. If $p=2$, then the
fusion system $\cF$ of $B$ contains an $\cF$-radical, $\cF$-centric subgroup
isomorphic to $Q_8$. Hence, $D\in\{Q_{16},SD_{16}\}$. On the other hand, every
fusion system of a block of a $p$-solvable group is constrained. This implies
that there is only one $\cF$-radical, $\cF$-centric subgroup in $D$. It follows
from \cite[Thm.~8.1]{Sambale} that $l(B)=2$. Examples are given by the two
double covers of $\fS_4$.

Now suppose that $p=3$. According to GAP~\cite{GAP}, there are four
possibilities for $D$: $\texttt{SmallGroup}(3^4,a)$ with $a\in\{7,8,9,10\}$.
In case $a=10$, $D$ has no extraspecial subgroup of order $27$ and exponent~3.
Hence, Lemma~\ref{robinson} excludes this case. Conversely, examples for
the remaining three cases are given by the (solvable) groups
$\texttt{SmallGroup}(3^4\cdot8,b)$ with $b\in\{531,532,533\}$.
\end{proof}

In the following we (conditionally) classify the possible defect groups in
case $p=3$. This relies ultimately on Blackburn's classification of the
$3$-groups of maximal class. Unfortunately, there is no such classification
for $p>3$.

\begin{prop}   \label{prop:p3}
 Let $B$ be a $3$-block of $G$ satisfying the ordinary weight conjecture.
 Suppose that there exists $\chi\in\Irr(B)$ such that $\chi(1)_3=|G|_3/9$.
 If $B$ has defect $d\ge 4$, then are at most three possible defect groups of
 order $3^d$ up to isomorphism. If $d$ is even, they all occur, and if $d$ is
 odd, only one of them occurs. In particular, we have examples for every
 defect $d\ge 2$.
\end{prop}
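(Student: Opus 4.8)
The plan is to invoke Lemma~\ref{robinson}, which under the ordinary weight conjecture forces a defect group $D$ of $B$ to have maximal class (since $d\ge4$ rules out $|D|=p^2$), and moreover to contain an $\cF$-radical, $\cF$-centric subgroup $Q\cong p^{1+2}_+$ of exponent $3$ with $\SL_2(3)\le\Out_\cF(Q)\le\GL_2(3)$. So the whole question reduces to a classification problem inside the family of $3$-groups of maximal class of a given order $3^d$: which of them admit such a subgroup $Q$, and of those, for which can the fusion-theoretic condition on $\Out_\cF(Q)$ actually be realized by a block. Here is where I would lean on Blackburn's classification of $3$-groups of maximal class, exactly as the paragraph before the statement advertises.

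First I would recall Blackburn's structural description: a $3$-group $P$ of maximal class and order $3^d$ ($d\ge4$) has a unique maximal subgroup $P_1$ of a distinguished type, and $P$ is determined by $P_1$ together with a small amount of extra data; in particular the isomorphism type is controlled by a short list of parameters. I would then locate, inside such a $P$, the candidate subgroups $Q\cong 3^{1+2}_+$ of exponent $3$: maximal class for $p=3$ means every proper section of appropriate order is tightly constrained, and the extraspecial subgroups of order $27$ and exponent $3$ sit in predictable positions (one expects $Q$ to be essentially forced to be a particular characteristic-like subgroup, or to lie in the unique relevant maximal subgroup). Counting the isomorphism types of $3$-groups of maximal class of order $3^d$ that contain a suitable $Q$ should yield the stated bound of at most three, with the even/odd dichotomy coming from the familiar fact (again from Blackburn) that the number and nature of maximal-class $3$-groups of a given order depends on the parity of the exponent $d$ — concretely, the ``exceptional'' types exist only in one parity.

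For the realizability (the ``they all occur'' / ``only one occurs'' half), I would exhibit explicit blocks, paralleling the constructions already used in Proposition~\ref{psolv}. Using the converse direction of Lemma~\ref{robinson}, it suffices to produce, for each surviving isomorphism type $D$, a block (satisfying the ordinary weight conjecture, e.g.\ a block of a $p$-solvable group, where the conjecture is a theorem) whose defect group is $D$ and which contains an $\cF$-radical, $\cF$-centric $Q$ as required; then automatically $\Irr(B)$ contains a character of defect $2$. The natural candidates are semidirect products of $D$ with a suitable $3'$-group inducing the right automorphisms on $Q$, found (as in the $p=3$ case of Proposition~\ref{psolv}) via GAP's small-groups library, and the even/odd asymmetry in which types occur should match the even/odd asymmetry in which maximal-class types \emph{exist}.

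The main obstacle I expect is the case analysis organizing \emph{which} maximal-class $3$-groups of order $3^d$ actually contain an $\cF$-radical, $\cF$-centric copy of $3^{1+2}_+$ of exponent $3$ admitting $\SL_2(3)\le\Out_\cF(Q)$, rather than merely containing such a $Q$ as an abstract subgroup. Membership in the fusion system is a genuine constraint (a maximal-class group may contain extraspecial subgroups that can never be $\cF$-radical and $\cF$-centric for a block with that defect group), and disentangling the abstract-subgroup count from the fusion-realizable count — together with checking that the parity behaviour of Blackburn's list transfers cleanly to this refined count — is the delicate step. Once that bookkeeping is pinned down, the upper bound of three and the even/odd occurrence pattern should fall out, and the final sentence (examples for every $d\ge2$) follows by combining these $d\ge4$ constructions with the defect-$2$ and defect-$3$ cases already available.
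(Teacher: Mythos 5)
Your opening reduction is the same as the paper's: Lemma~\ref{robinson} gives maximal class plus an $\cF$-radical, $\cF$-centric $Q\cong 3^{1+2}_+$, and the case $d=4$ is handled as in Proposition~\ref{psolv}. But for $d\ge5$ the step you defer to ``bookkeeping'' --- deciding which maximal-class $3$-groups of order $3^d$ support a \emph{saturated fusion system} with such a $Q$ --- is exactly the hard content, and you offer no mechanism for it (your own last paragraph concedes that abstract containment of $Q$ is not the same as fusion-realizability, which is precisely why a head count in Blackburn's list does not suffice). The paper does not redo this by hand: it notes that $B$ is not controlled, so for $d\ge5$ the group $D$ has $3$-rank $2$ by \cite[Thm.~A.1]{tworank}, and then the possible fusion systems are read off from the classification of rank-two $p$-local finite groups \cite[Thm.~5.10]{tworank}; the conclusion $D\cong B(3,d;0,\gamma,0)$ with $\gamma\in\{0,1,2\}$, and $\gamma=0$ forced when $d$ is odd, is the statement of that theorem, not a routine consequence of Blackburn's parameters. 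Without citing this classification (or reproving a substantial portion of it), the bound ``at most three'' and the parity pattern remain unestablished.

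The realizability half of your plan contains an outright error: you propose $3$-solvable semidirect products $D\rtimes H$ so that the ordinary weight conjecture is a theorem and the converse of Lemma~\ref{robinson} applies, but this is impossible for $d\ge5$. By Haggarty's theorem \cite{Haggarty}, invoked in Proposition~\ref{psolv}, a block of a $p$-solvable group containing a character $\chi$ with $\chi(1)_p=|G|_p/p^2$ has defect at most $4$; equivalently, since the ordinary weight conjecture holds for $p$-solvable groups, if the fusion system of your semidirect product had an $\cF$-radical, $\cF$-centric $Q$ as required, the converse of Lemma~\ref{robinson} would produce a defect-$2$ character in a block of defect $d\ge5$, a contradiction. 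So solvable examples exist only for $d\le4$ (this is why Proposition~\ref{psolv} stops there), and for $d\ge5$ one must go to non-solvable groups: the paper takes the principal $3$-blocks of $3.\PGL_3(q)$ and $\tw2F_4(q)$ for suitable $q$, where $\Out_{\cF}(Q)\cong\SL_2(3)$, and verifies the existence of the defect-$2$ character directly from the character tables \cite{Steinberg,Ma2F4}, thereby avoiding any appeal to the conjecture for these blocks in the existence direction.
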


\begin{proof}
In case $d=4$ we can argue as in Proposition~\ref{psolv}. Thus, suppose that
$d\ge 5$. By Lemma~\ref{robinson}, a defect group $D$ of $B$ has maximal class
and contains a radical, centric subgroup $Q\cong 3^{1+2}_+$.
In particular, $B$ is not a controlled block. Since $d\ge 5$, it is known that
$D$ has $3$-rank $2$ (see \cite[Thm.~A.1]{tworank}). Hence, the possible
fusion systems $\cF$ of $B$ are described in \cite[Thm.~5.10]{tworank}. It
turns out that $D$ is one of the groups $B(3,d;0,\gamma,0)$ with
$\gamma\in\{0,1,2\}$. If $d$ is odd, then $\gamma=0$. In all these cases
examples are given such that $\Out_{\cF}(Q)\cong\SL_2(3)$. We can pick for
instance the principal 3-blocks of $3.\PGL_3(q)$ and $\tw2F_4(q)$ for a
suitable prime power $q$. An inspection of the character tables in
\cite{Steinberg,Ma2F4} shows the existence of $\chi$.
\end{proof}

Now we are in a position to cover the second part of Theorem~C (recall that 
the ordinary weight conjecture for all blocks of all finite groups implies 
Alperin's weight conjecture).

\begin{cor}
 Let $B$ be a $3$-block of $G$ with defect $d$ satisfying the ordinary weight
 conjecture and Alperin's weight conjecture.
 If there exists $\chi\in\Irr(B)$ such that $\chi(1)_3=|G|_3/9$, then
 \[l(B)\le\begin{cases}
   8&\text{if }4\ne d\equiv 0\pmod{2},\\
   9&\text{if }d\equiv 1\pmod{2},\\
   10&\text{if }d=4.
  \end{cases}\]
\end{cor}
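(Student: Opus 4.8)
The plan is to use the two conjectural inputs in complementary roles. The ordinary weight conjecture, through Lemma~\ref{robinson} and Proposition~\ref{prop:p3}, pins down the defect group $D$ and the fusion system $\cF$ of $B$; Alperin's weight conjecture then turns the bound on $l(B)$ into a counting problem local to $\cF$. Precisely, by Alperin's weight conjecture $l(B)$ is the number of $G$-conjugacy classes of $B$-weights, and every weight $(R,\psi)$ may be chosen with $R$ an $\cF$-centric $\cF$-radical subgroup of $D$. Hence $l(B)=\sum_{R}m(R)$, the sum running over $\cF$-classes of such $R$, where $m(R)$ is the number of irreducible characters of $\bN_G(R)/R$ of $p$-defect zero lying in the Brauer correspondent of $B$.

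I would first dispose of the small defects. If $d=2$ then $D$ is abelian of order $9$, so every character of $B$ has height $0$ and $k(B)=k_0(B)\le p^2=9$ by Proposition~\ref{propk0}; since $d>0$ forces $l(B)\le k(B)-1$, we get $l(B)\le 8$. If $d=3$ then by Lemma~\ref{robinson} the group $D$ has maximal class of order $27$, hence is extraspecial, and the weight count runs over $D$ itself together with the $\cF$-essential subgroups (now of type $C_3\times C_3$); the same bookkeeping as below, applied to the classified fusion systems on $3^{1+2}_\pm$, yields $l(B)\le 9$. So from now on $d\ge 4$.

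For $d\ge 4$ I would invoke Proposition~\ref{prop:p3}: $D$ is one of the finitely many maximal class $3$-groups listed there, with $\cF$ one of the fusion systems of \cite[Thm.~5.10]{tworank} (for $d\ge 5$) or determined as in Proposition~\ref{psolv} (for $d=4$). In each case the $\cF$-centric $\cF$-radical subgroups are $D$ and the $\cF$-essential subgroups, and by Lemma~\ref{robinson} the latter are $\cF$-conjugate to $Q\cong 3^{1+2}_+$ with $\SL_2(3)\le\Out_\cF(Q)\le\GL_2(3)$, where $\Out_\cF(Q)\cong\SL_2(3)$ when $d\ge 5$. To evaluate $m(Q)$ I note that $\bN_G(Q)/\bC_G(Q)Q=\Out_\cF(Q)$ while $\bC_G(Q)/\bZ(Q)$ is a $p'$-group (as $Q$ is centric), so a defect-zero character of $\bN_G(Q)/Q$ has degree divisible by $|\Out_\cF(Q)|_3=3$; Clifford theory over the normal $p'$-section then reduces $m(Q)$ to counting defect-zero $\alpha_Q$-projective irreducible characters of $\Out_\cF(Q)$ with the central character on $\bZ(Q)=\bZ(D)$ prescribed by $B$. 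By Lemma~\ref{aux}(b) both $\SL_2(3)$ and $\GL_2(3)$ have trivial Schur multiplier, so $\alpha_Q$ is trivial and these are ordinary characters: $\SL_2(3)$ has exactly one character of degree divisible by $3$ and $\GL_2(3)$ exactly two, with the two nontrivial characters of $\bZ(Q)$ fixed by $\SL_2(3)$ but interchanged by $\GL_2(3)\setminus\SL_2(3)$. The term $m(D)$ is the number of defect-zero characters of $\bN_G(D)/D$ in $B$; here $\Out_\cF(D)$ is a small $3'$-group, so $\bN_G(D)/D$ is, modulo a central $p'$-section, a $3'$-group all of whose characters have defect zero, and $m(D)\le k(\Out_\cF(D))$. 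Summing $m(D)$ and the $m(Q)$ over the $\cF$-classes dictated by \cite[Thm.~5.10]{tworank} produces the stated bounds, the even/odd dichotomy being governed by the parameter $\gamma$ (forced to be $0$ for odd $d$) and by the dependence of $\Out_\cF(D)$ and of the number of $\cF$-classes of essential subgroups on $d\bmod 2$; the larger value $10$ for $d=4$ reflects that there $\Out_\cF(Q)$ may equal $\GL_2(3)$ and an extra essential class can occur.

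The hard part will be the exact evaluation of the local numbers $m(Q)$ and $m(D)$, and in particular the bookkeeping of the central characters of $\bZ(D)$ together with the $\Out_\cF(Q)$-orbits of $p'$-characters feeding into them. Lemma~\ref{aux}(b) removes the K\"ulshammer--Puig cocycle and reduces everything to the ordinary character tables of $\SL_2(3)$ and $\GL_2(3)$, but extracting the precise constants $8$, $9$, $10$ and the even--odd split requires reading off from the classified fusion systems of \cite[Thm.~5.10]{tworank} exactly how many $\cF$-classes of essential subgroups arise and what $\Out_\cF(D)$ is in each case. Once these fusion data are fixed the remaining arithmetic is finite and elementary; it is the step where the full strength of the structural classification in Proposition~\ref{prop:p3} must be brought to bear.
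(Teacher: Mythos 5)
Your overall skeleton is the paper's: the ordinary weight conjecture (via Lemma~\ref{robinson} and Proposition~\ref{prop:p3}) pins down $D$ and constrains $\cF$, and Alperin's weight conjecture in the local form \cite[Conj.~2.6]{Sambale} turns $l(B)$ into a sum of counts of (twisted) defect-zero characters of $\Out_\cF(R)$ over $\cF$-classes of $\cF$-centric $\cF$-radical $R$. Your handling of $d=2$ (via Proposition~\ref{propk0}, Kessar--Malle and $l(B)<k(B)$) is a legitimate filler for what the paper leaves implicit, and your $d=3$ and $d\ge5$ discussions match the paper in outline, with the final constants deferred to the tables of \cite{ExtraspecialExpp} and \cite[Thm.~5.10]{tworank} exactly as the paper defers them. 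One internal inconsistency: your assertion that $\Out_\cF(Q)\cong\SL_2(3)$ for all essential $Q$ when $d\ge5$ is false and misreads Proposition~\ref{prop:p3}, which only says \emph{examples exist} with that automizer; if it were true, your count would top out at $4+3\cdot1=7$, contradicting the realized value $l(B)=9$ for the principal block of $\tw2F_4(q^2)$. The paper correctly allows $\Out_\cF(Q)\in\{\SL_2(3),\GL_2(3)\}$ throughout.

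The genuine gap is $d=4$, which is precisely the case responsible for the bound $10$. Your plan presupposes that the fusion systems are "classified" there (you cite Proposition~\ref{psolv}, which only lists possible defect groups for $p$-solvable $G$, not fusion systems for general $G$), but \cite[Thm.~5.10]{tworank} covers only $3$-groups of rank $2$, while the remaining candidate $D\cong C_3\wr C_3$ has $3$-rank $3$ and its saturated fusion systems were unknown; the paper must analyse them by hand. Moreover, your premise that every essential subgroup is $\cF$-conjugate to $Q\cong 3^{1+2}_+$ fails here: the elementary abelian subgroup $Q_3\cong C_3\times C_3\times C_3$ is $\cF$-centric $\cF$-radical with $\Out_\cF(Q_3)\in\{\fS_4,\fS_4\times C_2\}$, contributing up to $4$ weights (the two defect-zero characters of $\fS_4$, doubled by the $C_2$). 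Without it your count for $d=4$ caps at $m(D)+m(Q_2)\le 4+2=6$, so your scheme cannot even produce the stated value $10=4+2+4$, which arises from the configuration $\Out_\cF(Q_2)\cong\GL_2(3)$, $\Out_\cF(D)\cong C_2\times C_2$, $\Out_\cF(Q_3)\cong\fS_4\times C_2$. The missing ideas are exactly the paper's hand analysis: showing the $C_3\times C_3$ candidates are not $\cF$-centric (they are $\Aut_\cF(Q_2)$-conjugate to $D'$ and $C_D(D')=Q_3$), extending the automorphism $\alpha\in\Aut_\cF(Q_2)$ inverting $Q_2/Z(Q_2)$ to $D$ by saturation and restricting it to the unique abelian maximal subgroup $Q_3$, and a GAP computation pinning down $\Out_\cF(Q_3)$; only then does the finite arithmetic you describe go through.
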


\begin{proof}
Let $D$ be a defect group of $B$. We may assume that $|D|\ge 27$. Then $D$ has
maximal class. In case $|D|=27$ and $\exp(D)=9$, Watanabe has shown that 
$l(B)\le 2$ without invoking any conjecture (see
\cite[Thm.~1.33 and 8.8]{Sambale}). Now assume that
$D\cong 3^{1+2}_+$. Then the possible fusion systems $\cF$ of $B$ are given in
\cite{ExtraspecialExpp}. To compute $l(B)$ we use Alperin's weight conjecture
in the form \cite[Conj.~2.6]{Sambale}. Let $Q\le D$ be $\cF$-radical and
$\cF$-centric. For $Q=D$ we have $\Out_{\cF}(D)\le SD_{16}$. Hence, regardless
of the K\"ulshammer--Puig class, $D$ contributes at most $7$ to $l(B)$. For
$Q<D$ we have $\Out_{\cF}(Q)\in\{\SL_2(3),\GL_2(3)\}$. The groups $\SL_2(3)$
and $\GL_2(3)$ have trivial Schur multiplier and exactly one respectively two
irreducible characters of $3$-defect $0$. Hence, each $\cF$-conjugacy class
of such a subgroup $Q$ contributes at most $2$ to $l(B)$. There are at most two
such subgroups up to conjugation.
Now an examination of the tables in \cite{ExtraspecialExpp} yields the claim
for $d=3$. Note that $l(B)=9$ only occurs for the exceptional fusion systems on
$\tw2F_4(2)'$ and $\tw2F_4(2)$.

Now let $d\ge 5$.
As in Proposition~\ref{prop:p3} there are at most three possibilities for $D$
and the possible fusion systems $\cF$ are listed in \cite[Thm.~5.10]{tworank}.
We are only interested in those cases where there exists an $\cF$-radical,
$\cF$-centric, extraspecial subgroup of order $27$. We have
$\Out_{\cF}(D)\le C_2\times C_2$. Hence, $D$ contributes at most $4$ to $l(B)$.
Now assume that $Q<D$ is $\cF$-radical and $\cF$-centric (i.\,e., $\cF$-Alperin
in the notation of \cite{tworank}). Then as above,
$\Out_\cF(Q)\in\{\SL_2(3),\GL_2(3)\}$.
There are at most three such subgroups up to conjugation. The claim
$l(B)\le 9$ follows easily.
In case $l(B)=9$, $\cF$ is the fusion system of $^2F_4(q^2)$ for some $2$-power
$q^2$ or $\cF$ is exotic. In both cases $d$ is odd. The principal block of
$\tw2F_4(q^2)$ shows that $l(B)=9$ really occurs (see \cite{Ma2F4}).

It remains to deal with the case $d=4$. By the results of \cite{tworank}, we
may assume that $D\cong C_3\wr C_3$. The fusion systems on this group seem to
be unknown. Therefore, we have to analyse the structure of $D$ by hand.
Up to conjugation, $D$ has the following candidates of $\cF$-radical,
$\cF$-centric subgroups: $Q_1\cong C_3\times C_3$, $Q_2\cong 3^{1+2}_+$,
$Q_3\cong C_3\times C_3\times C_3$ and $Q_4=D$. We may assume that
$Q_1\le Q_2$. As before, $Q_2$ must be $\cF$-radical and
$\Out_{\cF}(Q_2)\in\{\SL_2(3),\GL_2(3)\}$. Hence, $Q_1$ is conjugate to $D'$
under $\Aut_{\cF}(Q_2)$. Since $C_D(D')=Q_3$, we conclude that $Q_1$ is not
$\cF$-centric.
Now let $\alpha\in\Aut_{\cF}(Q_2)$ the automorphism inverting the elements of
$Q_2/Z(Q_2)$. Then $\alpha$ acts trivially on $Z(Q_2)$. By the saturation
property of fusion systems, $\alpha$ extends to $D$. Since $Q_3$ is the only
abelian maximal subgroup of $D$, the extension of $\alpha$ restricts to $Q_3$.
Since $Z(Q_2)\le Q_3$, it follows from a GAP computation that
$\Out_{\cF}(Q_3)\in\{\fS_4,\fS_4\times C_2\}$. Using similar arguments we end
up with two configurations:
\begin{enumerate}
\item[(i)] $\Out_{\cF}(Q_2)\cong\SL_2(3)$, $\Out_{\cF}(D)\cong C_2$ and
  $\Out_{\cF}(Q_3)\cong \fS_4$.
\item[(ii)] $\Out_{\cF}(Q_2)\cong\GL_2(3)$, $\Out_{\cF}(D)\cong C_2\times C_2$
  and $\Out_{\cF}(Q_3)\cong \fS_4\times C_2$.
\end{enumerate}
In the first case we have $l(B)\le 5$ (occurs for the principal block of
$\PSL_4(4)$) and in the second case $l(B)\le 10$ (occurs for the principal
block of $\PSL_6(2)$).
\end{proof}

Concerning the primes $p>3$ we note that for example the principal $5$-block of
$\PSU_6(4)$ has defect $6$ and an irreducible character of defect $2$. However,
we do not know if for any prime $p\ge5$ and any $d\ge5$ there are
$p$-blocks of defect $d$ with irreducible characters of defect~$2$.

\section{Symmetric, alternating and sporadic groups}

In this section we discuss the validity of our conjectures for alternating,
symmetric and sporadic groups.

\subsection{}
The irreducible characters of the symmetric group $\fS_n$ are parametrised by
partitions $\la$ of $n$, and we shall write $\chi_\la\in\Irr(\fS_n)$ for the
character labelled by $\la$. Its degree is given by the well-known hook
formula.
We first address Conjecture~B.

\begin{prop}   \label{prop:lB-Sn}
 Conjecture~B holds for the alternating and symmetric groups at any prime.
\end{prop}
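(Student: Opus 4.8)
The plan is to reduce everything to the symmetric group and exploit the hook-length formula together with a bound on the number of modular irreducibles in a block of given weight. Recall that $\chi=\chi_\la\in\Irr(\fS_n)$ has $\chi(1)=n!/\prod_{(i,j)\in\la}h(i,j)$, so its defect is
\[ d_\chi=\nu_p(n!)-\nu_p(\chi(1))=\sum_{(i,j)\in\la}\nu_p\bigl(h(i,j)\bigr), \]
where $\nu_p$ denotes the $p$-adic valuation and $h(i,j)$ the hook lengths. Thus Conjecture~B for $\fS_n$ is the assertion $|\IBr(\chi_\la^0)|\le p^{d_\chi}$. I would first record two standard facts: by basic block theory every modular constituent of $\chi_\la^0$ lies in the $p$-block $B$ of $\chi_\la$, and by the Nakayama Conjecture $B$ is determined by the $p$-core of $\la$ and its $p$-weight $w$; moreover the number of cells $(i,j)$ with $p\mid h(i,j)$ equals exactly $w$. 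Consequently
\[ d_\chi=\sum_{(i,j)}\nu_p\bigl(h(i,j)\bigr)\ge\#\{(i,j):p\mid h(i,j)\}=w. \]

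The heart of the argument is then the chain
\[ |\IBr(\chi_\la^0)|\le l(B)\le p^{w}\le p^{d_\chi}, \]
where the first inequality is trivial (all constituents are among the $l(B)$ Brauer characters of $B$) and the last is the inequality $d_\chi\ge w$ just established. This handles \emph{all} heights simultaneously: a character of large height has small $d_\chi$, but the crude bound $l(B)$ still suffices because $d_\chi$ never drops below $w$. The point to prove is therefore the middle inequality $l(B)\le p^{w}$. Here I would use that, for symmetric groups, $l(B)$ depends only on $w$ and $p$, being the coefficient
\[ l(B)=[x^{w}]\prod_{i\ge1}(1-x^i)^{-(p-1)}, \]
the number of $(p-1)$-tuples of partitions of total size $w$ (the companion of $k(B)=[x^{w}]\prod_{i\ge1}(1-x^i)^{-p}$ coming from the $p$-core/$p$-quotient bijection).

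The main obstacle is thus the purely combinatorial estimate $a_w\le p^{w}$, where $a_w:=[x^{w}]\prod_{i\ge1}(1-x^i)^{-(p-1)}$. I would prove the sharper recurrence $a_w\le p\,a_{w-1}$ and conclude by induction from $a_0=1$. For $p=2$ this is the classical inequality $P(w)\le 2P(w-1)$ for the partition function $P$ (partitions of $w$ with no part equal to $1$ inject into partitions of $w-1$). For general $p$ one writes $a_w=\sum_{n_1+\dots+n_{p-1}=w}\prod_c P(n_c)$ and estimates this convolution; the extreme ratios occur at the start, where $a_1/a_0=p-1$ and $a_2/a_1=(p+2)/2$ are both at most $p$, and for larger $w$ the ratio can be controlled via the divisor-sum recurrence $w\,a_w=(p-1)\sum_{m=1}^{w}\sigma(m)\,a_{w-m}$. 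Establishing $a_w/a_{w-1}\le p$ cleanly for all $w$ is the one genuinely technical step.

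Finally, for the alternating groups I would pass from $\fS_n$ to $\fA_n$ by Clifford theory for the index-$2$ subgroup $\fA_n\nor\fS_n$. When $p$ is odd the index $2$ is prime to $p$, so $|\fA_n|_p=|\fS_n|_p$, the $p$-parts $\psi(1)_p$ of the constituents $\psi$ of $(\chi_\la)_{\fA_n}$ equal $\chi_\la(1)_p$, and the modular constituents of $\psi^0$ are governed by the restrictions of those of $\chi_\la^0$ through an inertia index dividing $2$; the bound for $\fS_n$ then transfers directly. The delicate case is $p=2$, where the index equals $p$ and $|\fA_n|_2=|\fS_n|_2/2$: here one must check that the drop by a factor $2$ in the relevant $2$-part is matched by the behaviour of the constituents under restriction, which I would verify using the explicit relationship between the $2$-decomposition matrices of $\fA_n$ and $\fS_n$.
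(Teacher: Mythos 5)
Your symmetric-group skeleton coincides with the paper's: from the hook formula one deduces that the weight $w$ of the block of $\chi_\la$ is at most $d_\chi$ (you via $d_\chi=\sum_{(i,j)}\nu_p(h(i,j))\ge\#\{(i,j): p\mid h(i,j)\}=w$, the paper via bead moves on the $p$-abacus), and then one bounds $l(B)$ for a weight-$w$ block by $p^w$. But two steps of your plan are genuinely incomplete. First, the combinatorial estimate. Your identification $l(B)=c_{p-1}(w)$, the number of $(p-1)$-tuples of partitions of total size $w$, is a known fact (it needs a citation, e.g.\ to Olsson's book; the paper instead simply quotes \cite[Prop.~5.2]{MR16}, which contains exactly the bound $l(B)<p^w$). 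You, however, leave the inequality $a_w\le p\,a_{w-1}$ as ``the one genuinely technical step'', and the route you sketch does not deliver it: from the recurrence $w\,a_w=(p-1)\sum_{m=1}^{w}\sigma(m)\,a_{w-m}$ and monotonicity of $(a_j)$ one only gets $a_w\le (p-1)\,w^{-1}\bigl(\sum_{m\le w}\sigma(m)\bigr)a_{w-1}$, and since $\sum_{m\le w}\sigma(m)$ grows quadratically in $w$, this degenerates once $w$ is large compared with $p$. So the heart of the symmetric-group bound is still missing; it is precisely what the cited proposition of Malle--Robinson supplies.

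Second, the alternating groups. For odd $p$ your claim that the bound ``transfers directly'' fails in the self-associate case: when the $p$-core is self-conjugate, crude Clifford theory only gives that the constituents of $\psi^0$ (for $\psi$ below $\chi_\la$) are among the at most $2\,|\IBr(\chi_\la^0)|$ constituents of $(\chi_\la^0)_{\fA_n}$, and since the $\fS_n$ bound is barely below $p^w$, doubling it overshoots $p^{d_\psi}$. One needs an honest count of $l$ for the relevant block of $\fA_n$ — the paper invokes the estimate inside the proof of \cite[Prop.~5.2]{MR16}; compare the explicit self-associate count $(p^2+6p-3)/4$ appearing in Theorem~\ref{thm:An} for weight~2 — and this is where Mullineux-fixed modular characters enter, which your sketch never addresses. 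At $p=2$ the difficulty is sharper than you acknowledge: since the defect $d(B)-\Ht(\chi)$ can drop by one in passing from $\fS_n$ to $\fA_n$, you must prove $l\le 2^{w-1}$, strictly stronger than the symmetric-group bound. The paper gets this from the exact formulas for $2$-blocks of $\fA_n$ of weight $w$, namely $l=\pi(w)$ for $w$ odd and $l=\pi(w)+\pi(w/2)$ for $w$ even, combined with $\pi(w)\le 2^{w-1}$ (and $\pi(w)+\pi(w/2)\le 2^{w-1}$ for even $w\ge4$), the abelian-defect case $w=2$ being checked separately. Your proposal to ``verify using the explicit relationship between the $2$-decomposition matrices of $\fA_n$ and $\fS_n$'' names no mechanism and would in effect have to reproduce exactly these counts.
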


\begin{proof}
First consider $G=\fS_n$. Let $p$ be a prime. For $\chi=\chi_\la\in\Irr(\fS_n)$
let $w\ge0$ be its defect, i.e., such that $p^w\chi(1)_p=|\fS_n|_p$. It is
immediate from the hook formula that this can only happen if there are at most
$w$ ways to move a bead upwards on its respective
ruler in the $p$-abacus diagram of $\la$. But then clearly the $p$-core of
$\la$ can be reached by removing at most $w$ $p$-hooks, so $\la$ lies in a
$p$-block of weight at most~$w$. But it is well-known that any such block
$B$ has less than $p^w$ modular irreducible characters (see
e.g.~\cite[Prop.~5.2]{MR16}).
\par
We now consider the alternating groups where we first assume that $p$ is odd.
Clearly
$\chi\in\Irr(\fA_n)$ satisfies our hypothesis if and only if it lies below a
character $\chi_\la$ of $\fS_n$ which does. But then $\chi_\la$ lies in a
$p$-block $B$ of $\fS_n$ of weight at most~$w$, as shown before. If $B$ is
not self-associate, that is, if the parametrising $p$-core is not self-dual,
then $B$ and its conjugate $B'$ both lie over a block $B_0$ of $\fA_n$, namely
the one containing $\chi$, with the same invariants. So we are done
by the case of $\fS_n$. If $B$ is self-associate then an easy estimate shows
that again $l(B)<p^w$ (see the proof of \cite[Prop.~5.2]{MR16}).
\par
Finally consider $p=2$ for alternating groups. Here the number of modular
irreducibles in a 2-block $B$ of weight~$w$ is $\pi(w)$ if $w$ is odd,
respectively $\pi(w)+\pi(w/2)$ if $w$ is even, with $\pi(w)$ denoting the
number of partitions of $w$. Now $\pi(w)\le 2^{w-1}$, and moreover
$\pi(w)+\pi(w/2)\le 2^{w-1}$ for even $w\ge4$, so for $w\ge3$ we have
$l(B)\le 2^{w-1}$. On the other hand the difference $d(B)-\Ht(\chi)$ is
at most one smaller for $\chi\in\Irr(\fA_n)$ than for a character $\tilde\chi$
of $\fS_n$ lying above $\chi$. Thus for all $w\ge3$ we have $l(B)\le 2^{w-1}$
is at most $2^{d(B)-\Ht(\chi)}$ for all $\chi\in\Irr(B)$, as required. For
$w=2$ the defect groups of $B$ are abelian and the claim is easily verified.
\end{proof}

The next statement follows essentially from a result of Scopes:

\begin{thm}   \label{thm:An}
 Conjecture~A holds for the alternating and symmetric groups
 at any prime.
\end{thm}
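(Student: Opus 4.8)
The plan is to reduce Conjecture~A for symmetric and alternating groups to a statement about $p$-blocks of small weight, and then to bound both $|\IBr(\chi^0)|$ and the decomposition numbers $d_{\chi\vhi}$ block by block. The hypothesis $\chi(1)_p=|G|_p/p^2$ means $\chi$ has defect~$2$, so by the height/defect relation $d_\chi = d(B)-\Ht(\chi) = 2$. In $\fS_n$ the abacus argument from the proof of Proposition~\ref{prop:lB-Sn} shows that $\chi=\chi_\la$ then lies in a $p$-block $B$ of weight $w\le 2$. Since always $|\IBr(\chi^0)|\le l(B)$ and $d_{\chi\vhi}\le d_{\chi\vhi}$ is controlled by the decomposition matrix of $B$, it suffices to understand blocks of weight at most~$2$.

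First I would dispose of the weight $w\le 1$ case: blocks of defect~$0$ contribute a single irreducible Brauer character with $d_{\chi\vhi}=1$, and for weight~$1$ the defect group is cyclic, so by Brauer's theory of blocks of cyclic defect (equivalently, the Brauer tree description) we have $l(B)<p$, all decomposition numbers equal $0$ or $1$, and hence $|\IBr(\chi^0)|\le p-1\le p^2-1$ and $d_{\chi\vhi}\le 1\le p$. This is where I would invoke the result of Scopes: for weight~$2$ blocks of symmetric groups, Scopes proved that up to (a bounded number of) Morita equivalence classes the block is determined, and in fact the decomposition matrix of a weight~$2$ block of $\fS_n$ is explicitly known. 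The key numerical facts I would extract are that such a block has $l(B)=\tfrac12 p(p-1)$ (or $p-1$ when $p=2$, adjusting for small cases), which is at most $p^2-1$, and that every decomposition number $d_{\chi_\la\,\vhi}$ is at most~$1$ for $p$ odd, so in particular $\le p$. For $p=2$ the weight~$2$ blocks have abelian (Klein four) defect groups and the bounds follow from the explicit small decomposition matrices, as already noted at the end of the proof of Proposition~\ref{prop:lB-Sn}.

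For the alternating groups I would proceed exactly as in the proof of Proposition~\ref{prop:lB-Sn}: a character $\chi\in\Irr(\fA_n)$ of defect~$2$ lies below some $\chi_\la\in\Irr(\fS_n)$ of defect~$2$, hence in a block $B$ of $\fS_n$ of weight $w\le 2$. If the associated $p$-core is not self-dual then $B$ and its conjugate $B'$ cover the same block $B_0$ of $\fA_n$ with the same invariants, and the decomposition data of $B_0$ is read off from that of $B$, so the bounds transfer directly. If the $p$-core is self-dual, the restriction from $\fS_n$ to $\fA_n$ may split characters in two, at most doubling $l(B_0)$ relative to a suitable count; for $p$ odd the estimate $l(B_0)<p^2$ still holds by the same reasoning as in Proposition~\ref{prop:lB-Sn}, and the decomposition numbers can only decrease or split, staying bounded by~$p$. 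For $p=2$ the weight~$2$ alternating blocks again have abelian defect and are handled by direct inspection.

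The main obstacle I expect is the self-associate weight~$2$ case for the alternating groups, where Clifford theory for the index-two subgroup $\fA_n\le\fS_n$ can split both ordinary and modular irreducibles, and one must check that neither $l(B_0)$ exceeds $p^2-1$ nor any decomposition number exceeds~$p$ after this splitting. I would address this by combining Scopes's explicit weight~$2$ decomposition matrices for $\fS_n$ with the standard description of how Brauer characters behave under restriction to a normal subgroup of index~$2$ (the relevant Clifford theory for Brauer characters, e.g.\ \cite[Thm.~8.7 and 8.20]{N}), verifying that the splitting only ever reduces multiplicities or duplicates a character whose decomposition numbers are already at most~$1$. Once the weight~$2$ numerics are pinned down, the bounds $|\IBr(\chi^0)|\le p^2-1$ and $d_{\chi\vhi}\le p$ follow uniformly across all primes.
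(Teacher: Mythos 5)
Your overall route coincides with the paper's: the abacus/hook-formula reduction to blocks of weight~$2$, Scopes \cite{Sco95} for $\fS_n$ at odd primes, and Clifford theory for the index-two subgroup $\fA_n$. For odd $p$ your argument is essentially correct, with two quantitative slips worth noting. A weight-two block of $\fS_n$ has $l(B)=(p-1)(p+2)/2$, not $\tfrac12 p(p-1)$ (your inequality $l(B)\le p^2-1$ survives this correction). More importantly, in the self-associate $\fA_n$ case your ``at most doubling'' heuristic is too crude at $p=3$: doubling the row bound of five nonzero entries from \cite{Sco95} would only give $|\IBr(\chi^0)|\le 10>p^2-1=8$. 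The paper instead makes the count explicit: a self-associate weight-two block contains $(p-1)/2$ self-associate and $(p+1)^2/2$ non-self-associate ordinary characters, yielding $l(B_0)\le(p^2+6p-3)/4<p^2$ (which for $p=3$ gives $6\le 8$), while the decomposition numbers are at most two by the $\fS_n$ result. Your fallback appeal to the estimate in the proof of Proposition~\ref{prop:lB-Sn} does cover this, but the step needs to be carried out, not just gestured at.

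The genuine gap is at $p=2$. First, a weight-two block of $\fS_n$ at $p=2$ has \emph{dihedral} defect groups of order~$8$ (Sylow $2$-subgroups of $\fS_4$), not Klein four groups; the abelian-defect remark at the end of the proof of Proposition~\ref{prop:lB-Sn} concerns $\fA_n$, where the defect group is $D_8\cap\fA_4\cong C_2\times C_2$. Your conclusion for $\fS_n$ still holds via the tables in \cite{Erdmann} ($l(B)\le3$, decomposition numbers at most~$2$), but the justification as written is wrong. Second, and more seriously, your reduction for $\fA_n$ fails at $p=2$: since $|\fA_n|_2=|\fS_n|_2/2$, a defect-two character of $\fA_n$ lies below either a \emph{splitting} $\chi_\la$ of defect~$2$ (weight-two block, Klein four defect group in $\fA_n$) or a \emph{non-splitting} $\chi_\la$ of defect~$3$, which sits in a weight-\emph{three} block of $\fS_n$ whose covered block of $\fA_n$ has dihedral defect groups of order~$8$ — for instance the degree-$10$ character in the principal $2$-block of $\fA_6$. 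So ``the weight-two alternating blocks have abelian defect and are handled by inspection'' misses an entire family of blocks. These dihedral $\fA_n$-blocks are again tame and could be settled by \cite{Erdmann}, but your proposal never sees them; the paper avoids the case analysis altogether by quoting Theorem~\ref{thm:p2} together with the verification of the Alperin--McKay conjecture for alternating groups in \cite{MO90}. Either repair works, but as it stands your $p=2$ alternating case is incorrect.
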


\begin{proof}
We first consider symmetric groups. Let $\chi=\chi_\la\in\Irr(\fS_n)$ be such
that $p^2\chi(1)_p=|\fS_n|_p$ for some prime~$p$. By the hook formula this
happens if and only if exactly two beads can be moved in exactly one way on
their respective
ruler in the $p$-abacus diagram of $\la$, or if one bead can be moved in two
ways. In either case the $p$-core of $\la$ can be reached by removing two
$p$-hooks, so $\la$ lies in a $p$-block of weight~2.

If $B$ is a $p$-block of weight~2 of $\fS_n$, with $p$ odd, then all
decomposition numbers are either~0 or~1 by Scopes \cite[Thm.~I]{Sco95}, and
furthermore any row in the decomposition matrix has at most 5 non-zero
entries. This proves our claim for $\fS_n$ and odd primes.
For $p=2$ the defect groups of $B$ of weight~2 are dihedral of order~8, and our
claim also follows.
\par
We now consider the alternating groups. First assume that $p$ is odd. Then
$\chi\in\Irr(\fA_n)$ satisfies our hypothesis if and only if it lies below a
character $\chi_\la$ of $\fS_n$ which does. But then $\chi_\la$ lies in a
$p$-block of $B$ of $\fS_n$ of weight~2, as shown before. If $B$ is not
self-associate then
$B$ and its conjugate $B'$ both lie over a block $B_0$ of $\fA_n$ with the same
invariants, in particular, with the same decomposition numbers. So we are done
by the case of $\fS_n$. If $B$ is self-associate then it is easy to count
that $B$ contains $(p-1)/2$ self-associate characters and $(p+1)^2/2$ that are
not. That is, $(p+1)^2/2$ characters in $B$ restrict irreducibly, while
$(p-1)/2$ of them split. It is clear from the $\fS_n$-result that the
decomposition numbers are at most two, and $l(B)\le(p^2+6p-3)/4<p^2$, so the
conjecture holds.
\par
Finally the case $p=2$ for $\fA_n$ follows by Theorem~\ref{thm:p2}, as the
Alperin--McKay conjecture is known to hold for all blocks of $\fA_n$,
see \cite{MO90}.
\end{proof}

The case of faithful blocks for the double covering groups of $\fA_n$ and
$\fS_n$ seems considerably harder to investigate, at least in as far as
decomposition numbers are concerned, due to the missing analogue of the
Theorem of Scopes for this situation.

\begin{prop}   \label{prop:2.An}
 Conjecture~B holds for the 2-fold covering groups of alternating and
 symmetric groups at any odd prime, and Conjecture A holds for these
 groups at $p=2$.
\end{prop}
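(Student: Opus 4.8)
The plan is to separate, at each prime, the irreducible characters of $\tilde\fS_n$ and $\tilde\fA_n$ that are inflated from $\fS_n$ resp.\ $\fA_n$ from the genuinely faithful (spin) characters, and to treat the two parts by completely different means. Consider first Conjecture~B at an odd prime $p$. The central involution generating the kernel of $\tilde\fS_n\to\fS_n$ is then a $p'$-element, so it acts by a fixed sign on every ordinary character in a given $p$-block; consequently each $p$-block is either entirely spin or entirely non-spin. For a non-spin character the degree and the decomposition data coincide with those of the underlying character of $\fS_n$ or $\fA_n$, so Conjecture~B for it is exactly the statement of Proposition~\ref{prop:lB-Sn}. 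It therefore remains to deal with the spin blocks.

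For a spin character I would follow the strategy of Proposition~\ref{prop:lB-Sn}, replacing the ordinary hook combinatorics by its bar analogue. The spin characters of $\tilde\fS_n$ are labelled by bar partitions of $n$, their degrees are given by the shifted hook-length formula, and the $p$-part of the degree is governed by the $\bar p$-abacus. If $\chi$ has defect $w$, that is $p^w\chi(1)_p=|\tilde\fS_n|_p$, then as in the linear case the number of admissible bead moves on the $\bar p$-abacus is at most $w$, so the $\bar p$-core of the labelling bar partition is reached by removing at most $w$ bars; hence $\chi$ lies in a spin block $B$ of $\bar p$-weight $w'\le w$. Since $|\IBr(\chi^0)|\le l(B)$ and $|\tilde\fS_n|_p/\chi(1)_p=p^w$, it then suffices to prove $l(B)<p^w$. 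The passage from $\tilde\fS_n$ to $\tilde\fA_n$ would be handled exactly as $\fA_n$ is handled against $\fS_n$ in Proposition~\ref{prop:lB-Sn}, splitting into associate and self-associate spin blocks.

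The main obstacle is precisely the bound $l(B)<p^w$ for a spin block of $\bar p$-weight $w$, the spin counterpart of the elementary fact that a weight-$w$ block of $\fS_n$ has fewer than $p^w$ modular irreducibles. Here the clean count of $p$-regular partitions is unavailable, and I would instead invoke the modular spin representation theory of the double covers, in which the irreducible $p$-modular spin characters of a fixed weight are enumerated via the crystal graph of the basic representation of the twisted affine Lie algebra of type $A_{2\ell}^{(2)}$, with $p=2\ell+1$; one must extract from this description that the number of such characters of weight $w$ grows strictly more slowly than $p^w$. This is exactly the point at which the missing analogue of Scopes's theorem makes the spin case harder than the linear one, and I expect it to absorb essentially all of the work for Conjecture~B.

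For Conjecture~A at $p=2$ the central involution is now a $2$-element, so spin and non-spin characters no longer lie in separate blocks and the combinatorial approach above does not apply; instead I would argue uniformly through Theorem~\ref{thm:p2}. Every $2$-block of $\tilde\fS_n$ and of $\tilde\fA_n$ is known to satisfy the Alperin--McKay conjecture (as has been verified for these covering groups), so if such a block $B$ contains a character $\chi$ with $\chi(1)_2=|G|_2/4$, then Theorem~\ref{thm:p2} immediately yields $|\IBr(\chi^0)|\le l(B)\le 3$ and $d_{\chi\phi}\le 2$, which is Conjecture~A. Thus at $p=2$ the entire content is the reduction to the already established Theorem~\ref{thm:p2} together with the validity of Alperin--McKay for these groups.
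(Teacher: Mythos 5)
Your $p=2$ argument is exactly the paper's: the Alperin--McKay conjecture is known for all blocks of the covering groups of $\fS_n$ and $\fA_n$ by Michler--Olsson \cite{MO90}, and Theorem~\ref{thm:p2} then gives both assertions of Conjecture~A. That part is correct and complete.

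For odd $p$, however, your proposal has a genuine gap at precisely the point you flag yourself: the bound $l(B)\le p^w$ for a spin block $B$ of $\bar p$-weight $w$ is never established, only deferred to a hoped-for count of crystal-graph vertices for the basic representation of $A_{2\ell}^{(2)}$, with no argument that this count stays below $p^w$. Since every other step of your odd-$p$ reduction (bar abacus, defect at least weight, passage to $2.\fA_n$) funnels into this one inequality, the proposal as it stands does not prove Conjecture~B. The paper closes exactly this gap by citation rather than by fresh combinatorics: by \cite[Prop.~5.2]{MR16} \emph{all} blocks of the covering groups already satisfy the Malle--Robinson $l(B)$-conjecture $l(B)\le p^{s(D)}$, and the shifted hook formula \cite[(7.2)]{Ol93} shows the $p$-defect of any spin character is at least the weight of its block; since the sectional rank of the defect group equals the weight, inequality~(\ref{eq:height}) holds and Conjecture~B follows at once. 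So the missing lemma you would need is, in effect, already in the literature, but packaged via $s(D)$ rather than via a direct enumeration of modular spin characters of fixed weight. A second, smaller divergence: for $2.\fA_n$ you propose an associate/self-associate block analysis parallel to Proposition~\ref{prop:lB-Sn}, which in the spin setting would again require counting arguments you do not supply; the paper instead invokes a height- and defect-group-preserving bijection between the $p$-blocks of $2.\fA_n$ and those of suitable groups $2.\fS_m$, which transfers the result with no extra work. Also note the paper's proof nowhere needs your block-separation step (spin versus non-spin blocks via the action of the central involution) as a separate case distinction, since the $l(B)$-conjecture citation covers all blocks uniformly, with non-spin characters already handled by Proposition~\ref{prop:lB-Sn}.
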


\begin{proof}
As the Alperin--McKay conjecture has been verified for all blocks of the
covering groups of alternating and symmetric groups \cite{MO90}, our claim
for the prime $p=2$ follows from Theorem~\ref{thm:p2}.
\par
So now assume that $p$ is an odd prime, and first consider $G=2.\fS_n$, a
2-fold covering group of $\fS_n$, with $n\ge5$. By the hook formula for
spin characters \cite[(7.2)]{Ol93}, the $p$-defect of any spin character
$\chi\in\Irr(G)$ is
at least the weight of the corresponding $p$-block $B$. On the other hand,
by \cite[Prop.~5.2]{MR16} the blocks of $G$ satisfy the $l(B)$-conjecture,
so Conjecture~B holds. Now for any $p$-block of $2.\fA_n$ there exists a
height and defect group preserving bijection to a $p$-block of a suitable
$2.\fS_m$, so the claim for $2.\fA_n$ also follows.
\end{proof}

\subsection{}
We now turn to verifying our conjectures for the sporadic quasi-simple groups.

\begin{prop}   \label{prop:spor}
 Let $B$ be a $p$-block of a covering group of a sporadic simple group or of
 $\tw2F_4(2)'$. Then:
 \begin{enumerate}[\rm(a)]
  \item $B$ satisfies Conjecture~B, unless possibly when
   $B$ is as in the first four lines of Table~\ref{tab:spor}.
  \item $B$ satisfies Conjecture~A unless possibly when
   $B$ is as in the last four lines of Table~\ref{tab:spor}.
 \end{enumerate}
\end{prop}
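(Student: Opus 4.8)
The plan is to reduce this to a finite verification carried out over the known $p$-modular decomposition matrices of the covering groups of sporadic simple groups (and of $\tw2F_4(2)'$), as recorded in the GAP character table library and the modular Atlas. For a fixed group $G$ and prime $p$, each $\chi\in\Irr(G)$ lies in a unique $p$-block $B$, and $|\IBr(\chi^0)|$ is simply the number of nonzero entries in the row of the decomposition matrix of $B$ indexed by $\chi$. Thus Conjecture~B amounts to checking $|\IBr(\chi^0)|\cdot\chi(1)_p\le|G|_p$ row by row, while Conjecture~A requires first singling out those $\chi$ with $\chi(1)_p=|G|_p/p^2$ and then verifying $|\IBr(\chi^0)|\le p^2-1$ together with $d_{\chi\vhi}\le p$ for each nonzero entry in the corresponding row.

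Before computing I would discard the cases already settled by general theory. Characters of $p$-defect $0$ satisfy $|\IBr(\chi^0)|=1$ by Brauer--Nesbitt, and blocks of defect $1$ (indeed all blocks with cyclic defect group) are governed by their Brauer tree, so both conjectures hold there by Brauer's bound; for such $\chi$ one has $|\IBr(\chi^0)|<p$ and $\chi(1)_p=|G|_p/p$, which yields Conjecture~B at once. For Conjecture~B one may further note that for height-zero characters it is implied by Brauer's $k(B)$-conjecture, which is often verified for the sporadic groups, and that Conjecture~B for defect-$2$ characters follows from Conjecture~A(1). This leaves only the blocks of defect at least $2$ with non-cyclic defect group to treat individually.

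For those blocks whose decomposition matrix is completely known, the verification is a direct inspection, and the only output is the (short) list of blocks where one of the stated bounds fails, which become exactly the rows of Table~\ref{tab:spor}. The main obstacle is the family of blocks of the large sporadic groups for which the decomposition matrix is only partially or not at all known. Here I would fall back on the universal estimate $|\IBr(\chi^0)|\le l(B)$: the number $l(B)$ of irreducible Brauer characters in $B$ equals the number of $p$-regular classes meeting $B$ and is available from the ordinary character table together with the block distribution, even when the individual decomposition numbers are not. Whenever $l(B)\cdot\chi(1)_p\le|G|_p$ (respectively $l(B)\le p^2-1$, which also controls each individual entry) the relevant conjecture is confirmed unconditionally. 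The residual cases, where this coarse bound is insufficient and the exact numbers are unavailable, are precisely those excluded in the statement; certifying that these are the \emph{only} exceptions is the delicate part, since it requires knowing that $l(B)$ has been correctly determined for every block and that no sharper information can presently be extracted.
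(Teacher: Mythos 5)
Your overall strategy---reduce to a finite inspection of the known ordinary and Brauer tables, dispose of defect zero and cyclic defect groups by general theory, and fall back on the coarse bound $|\IBr(\chi^0)|\le l(B)$ when decomposition matrices are incomplete---is essentially the route the paper takes: its proof checks the inequality~(\ref{eq:height}) using the known character tables and Brauer tables, i.e.\ it compares $l(B)\le p^{s(D)}$ (the Malle--Robinson bound, verified for sporadic groups) with $p^{d-h}$, which is the same kind of $l(B)$-versus-defect comparison you propose, and the blocks surviving this sieve are exactly the lines of Table~\ref{tab:spor}. Two secondary remarks. First, $l(B)$ is not ``the number of $p$-regular classes meeting $B$''; the correct statement is that $l(B)$ equals the rank of the matrix of values of $\Irr(B)$ on the $p$-regular classes, which is indeed computable from the ordinary table together with the block distribution, so your fallback survives with the corrected justification. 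Second, you omit the one non-routine ingredient of the paper's proof: for $Ly$ at $p=2$ and $Co_1$ at $p=5$ it builds projective characters as tensor products of defect-zero characters with ordinary irreducible characters, restricts them to the principal block, and observes that the relevant character $\chi$ does not occur in them, thereby sharpening $|\IBr(\chi^0)|\le l(B)$ to $8$ resp.\ $22$. This refinement is what justifies the ``ok'' entries of the table; it is not needed for the bare statement of the proposition, so its absence is not fatal.

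The genuine gap is your parenthetical claim that $l(B)\le p^2-1$ ``also controls each individual entry.'' It does not: the number of nonzero entries in a row of the decomposition matrix carries no information whatsoever about their size---one can have $|\IBr(\chi^0)|=1$ while $d_{\chi\vhi}$ is large. Consequently your fallback for blocks with unknown decomposition matrices verifies at most Conjecture~A(1), never A(2), and as written your procedure would certify Conjecture~A in full for any such block containing a defect-two character with $l(B)\le p^2-1$, which is unjustified. To actually prove part~(b) you must either confirm that every defect-two character of a sporadic covering group outside the last four lines of Table~\ref{tab:spor} lies in a block whose decomposition matrix is completely known (or whose defect group is cyclic), or supply an independent bound on the entries themselves; your proposal does neither, and the paper's verification of the decomposition-number bound rests precisely on reading the known Brauer tables, with the four A-exceptions being the cases where this information is missing. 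It may well be that no problematic block exists, since the listed exceptions all have $l(B)>p^2-1$, but that is a fact to be checked against the data, not a consequence of your argument.
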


\begin{table}[ht]
\caption{Blocks in sporadic groups}   \label{tab:spor}
  $$\begin{array}{c|cccrrc}
 G& p& d(B)& \max\Ht& l(B)& \chi(1)& \text{Conj.~A(1)}\\
 \hline
     Ly& 2& 8& 5& 9& 4\,997\,664& \text{ok}\\
   Co_1& 3& 9& 6& 29& 469\,945\,476\\
   Co_1& 5& 4& 2& 29& 210\,974\,400& \text{ok}\\
 2.Co_1& 5& 4& 2& 29& 1\,021\,620\,600\\
    J_4& 3& 3& 1& 9& \text{5 chars in 2 blks}\\
 \end{array}$$
\end{table}

\begin{proof}
For most blocks of sporadic groups, the inequality~(\ref{eq:height}) can be
checked using the
known character tables and Brauer tables; the only remaining cases are listed
in Table~\ref{tab:spor}, where the first four lines contain cases in which
Conjecture~B might fail, while the last four lines are those cases where
Conjecture~A might fail.
\par
In two of these remaining cases we can show that at least Conjecture~A(1) holds.
For $Ly$, the tensor product of the 2-defect~0 characters of degree 120064
with the irreducible character of degree~2480 is projective, has non-trivial
restriction to the principal block, but does not contain the (unique)
character $\chi$ of defect~2 of degree~4\,997\,664. Thus $|\IBr(\chi^0)|\le8$.
\par
For $Co_1$ the tensor products of irreducible 5-defect zero characters with
irreducible characters, restricted to the principal block, span a 7-dimensional
space of projective characters not containing the unique defect~2 character
$\chi$ of degree 210\,974\,400, so $|\IBr(\chi^0)|\le22$.
\end{proof}

\section{Groups of Lie type}

In this section we consider our conjectures for quasi-simple groups of Lie
type $G$. We prove both Conjectures~A and~B when $p$ is the defining
characteristic of $G$, up to finitely many symplectic groups. On the other
hand, we only treat one series of examples in the case of non-defining
characteristic.

\subsection{Defining characteristic}
We need an auxiliary result about root systems:

\begin{lem}   \label{lem:max N}
 Let $\Phi$ be an indecomposable root system and denote by $N(\Phi)$ its number
 of positive roots. If $\Psi\subset\Phi$ is any proper subsystem of $\Phi$
 then $N(\Phi)-N(\Psi)\ge n$, where $n$ is the rank of $\Phi$.
\end{lem}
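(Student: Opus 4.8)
The plan is to reduce to the case where $\Psi$ is a \emph{maximal} proper subsystem and then to treat the two resulting families separately. First fix a positive system $\Phi^+$ of $\Phi$ and put $\Psi^+=\Psi\cap\Phi^+$, which is a positive system of $\Psi$; then $N(\Phi)-N(\Psi)=|\Phi^+\setminus\Psi^+|$, so the assertion is equivalent to $|\Phi^+\setminus\Psi^+|\ge n$. Since $N$ is monotone under inclusion of subsystems (if $\Psi\subseteq\Psi'$ then $\Psi^+\subseteq\Psi'^+$) and every proper subsystem is contained in a maximal one, it suffices to prove the bound when $\Psi$ is maximal. By the Borel--de Siebenthal classification the maximal subsystems fall into two families: the parabolic subsystems $\Phi_I$ of rank $n-1$, obtained by deleting one node $\alpha_j$ from a base $\Delta=\{\alpha_1,\dots,\alpha_n\}$ of $\Phi$, and the subsystems of full rank $n$ obtained by deleting a node from the extended Dynkin diagram (together with the finitely many full-rank non-closed subsystems in the multiply-laced types).

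For the parabolic case I would argue uniformly. Here $\Phi^+\setminus\Psi^+$ is precisely the set of positive roots whose coefficient at $\alpha_j$ is positive, and I claim there are at least $n$ of these. The Dynkin diagram of an indecomposable system is a tree, so for every node $\alpha_k$ there is a unique path $P_k$ from $\alpha_j$ to $\alpha_k$; the corresponding subdiagram is connected, and the sum $\beta_k:=\sum_{\alpha_i\in P_k}\alpha_i$ is a positive root. Indeed, adding the nodes of a connected subdiagram one at a time, each new node is adjacent to some previously chosen one, which forces the pairing $\langle\gamma,\alpha^\vee\rangle<0$ for the current partial sum $\gamma$, so that $\gamma+\alpha$ remains a root. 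Each $\beta_k$ has $\alpha_j$ in its support, hence lies in $\Phi^+\setminus\Psi^+$, and the $\beta_k$ are pairwise distinct since $\alpha_k$ is the unique node of $P_k$ at maximal distance from $\alpha_j$. This exhibits $n$ distinct roots and settles the parabolic case, with equality exactly in type $A_n$ when $\alpha_j$ is an end node.

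For the maximal-rank subsystems the number of removed roots depends on which node of the extended diagram is deleted, and here I expect no comparably clean uniform argument; this is the main obstacle. My plan is to dispatch these by inspecting the finite list furnished by the Borel--de Siebenthal procedure in each Cartan type, checking $N(\Phi)-N(\Psi)\ge n$ directly from the known values of $N$. In practice the difference is far larger than $n$ except in very small rank: the largest maximal-rank subsystems (such as $A_1\times E_7\subset E_8$, or the long and short root subsystems of $B_n$ and $C_n$) already leave many more than $n$ positive roots, and the bound is attained only in type $A_n$ and for $B_2=C_2$. The finitely many full-rank non-closed subsystems are handled the same way, each being $W$-conjugate to a standard subsystem on Dynkin's list. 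The one point requiring genuine care is to confirm that this classification exhausts \emph{all} maximal subsystems, closed or not, so that the monotonicity reduction is legitimate.
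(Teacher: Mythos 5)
Your proposal is correct, but it is organised differently from the paper's proof. The paper proceeds entirely by inspection: it tabulates $N(\Phi)$ for each indecomposable type, determines via Borel--de Siebenthal the proper subsystem of maximal $N(\Psi)$ in each case (e.g.\ $A_{n-1}\subset A_n$; $D_n$ versus $B_{n-1}B_1$ inside $B_n$; $A_{n-1}$ and $D_{n-1}$ inside $D_n$), and records the exact minimum of $N(\Phi)-N(\Psi)$ in Table~\ref{tab:NPhi} -- a sharper statement than the lemma, which is in fact what gets used later (Proposition~\ref{prop:defchar} needs to know precisely for which $\Phi$ a subsystem with $N(\Psi)\ge N(\Phi)-2$ exists, and your bound $N(\Phi)-N(\Psi)\ge n$ alone would not deliver that). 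You share the same skeleton (reduce to maximal subsystems, invoke Borel--de Siebenthal), but you replace the rank-deficient half of the case analysis by a uniform, type-independent argument: your tree-path construction is sound -- the sum of the simple roots along the unique path from $\alpha_j$ to $\alpha_k$ is a positive root because a partial sum over a connected subdiagram pairs strictly negatively with any adjacent new simple coroot, and distinct endpoints give distinct supports -- so it correctly exhibits $n$ positive roots involving $\alpha_j$ and disposes of all parabolic deletions at once. You are also right, and more careful than the paper's wording, to flag the non-closed subsystems: Borel--de Siebenthal proper produces only closed subsystems, and the short-root subsystems (e.g.\ $D_n\subset C_n$, the short $A_2\subset G_2$) must be added to the list; in the paper's table this is absorbed silently into the $B_n,C_n$ row, where the value $n^2-n$ is realised by $D_n$ in either guise. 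The trade-off is that your full-rank case remains a plan ("inspect the finite list") rather than an executed check -- carrying it out amounts exactly to reproducing the paper's table -- and your uniform argument, while more conceptual, only yields the inequality, not the exact minima the paper extracts and reuses. (Minor quibble: your reduction tacitly uses that a maximal rank-deficient subsystem equals $\Phi\cap H$ for a root-spanned hyperplane and is hence closed and conjugate to a parabolic; this is part of the classification you cite, and you correctly identify it as the point needing care.)
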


In fact, our result is more precise in that we determine the minimum of
$N(\Phi)-N(\Psi)$ for each type, see Table~\ref{tab:NPhi}.

\begin{proof}
The values of $N(\Phi)$ for indecomposable root systems $\Phi$ are given as
in Table~\ref{tab:NPhi} (see e.g. \cite[Tab.~24.1]{MT}).
\begin{table}[h]   \label{tab:NPhi}
\caption{Maximal subsystems}
  $$\begin{array}{c|cccrrrrr}
 \Phi& A_n& B_n,C_n& D_n\ (n\ge4)& G_2& F_4& E_6& E_7& E_8\\
 \hline
 N(\Phi)& \binom{n+1}{2}& n^2& n^2-n& 6& 24& 36& 63& 120\\
 \text{max. }N(\Psi)& \binom{n}{2}& n^2-n& n^2-3n+2& 2& 16& 20& 36& 64\\
 N(\Phi)-N(\Psi)& n& n& 2n-2& 4& 8& 16& 27& 56\\
\end{array}$$
\end{table}
The possible proper subsystems can be determined by the algorithm of
Borel--de Siebenthal (see \cite[\S13.2]{MT}). For $\Phi$ of type $A_n$ the
largest proper subsystem $\Psi$ has type $A_{n-1}$, and then
$N(\Phi)-N(\Psi)=\binom{n+1}{2}-\binom{n}{2}$. For $\Phi$ of type $B_n$, we
need to consider subsystems of types $D_n$ and $B_{n-1}B_1$, of which the
second always has the larger number of positive roots. Next, for type $D_n$
with $n\ge4$, the largest subsystems are those of types $A_{n-1}$ and
$D_{n-1}$, which lead to the entries in our table. Finally, it is
straightforward to handle the possible subsystems for $\Phi$ of exceptional
type.
\end{proof}

\begin{cor}   \label{cor:defchar-lb}
 Let $G$ be a finite quasi-simple group of Lie type in characteristic~$p$.
 Let $\chi\in\Irr(G)$ lie in the $p$-block $B$. Then $l(B)\le|G_p|/\chi(1)_p$.
 In particular Conjecture~B holds for $G$ at the prime $p$.
\end{cor}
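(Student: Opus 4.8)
The plan is to combine the block theory of $G$ in defining characteristic with Lusztig's Jordan decomposition of characters, using Lemma~\ref{lem:max N} as the decisive combinatorial input. I may assume $\bG$ is simple and simply connected with $G=\bG^F$, root system $\Phi$ of rank $\ell$ and $N$ positive roots, so that $|G|_p=q^N$ for the appropriate power $q$ of $p$; the reduction to the remaining isogeny types and the finitely many exceptional covers is routine, since the relevant central subgroups have order prime to~$p$. First I recall that, by Humphreys' description of blocks in defining characteristic, every $p$-block of $G$ is either of defect zero or of full defect (defect group a Sylow $p$-subgroup). A defect-zero block consists of a single character $\chi$ with $\chi(1)_p=|G|_p$, so $l(B)=1=|G|_p/\chi(1)_p$ and there is nothing to prove. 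For a full-defect block $B$ I will use only the crude bound $l(B)\le|\IBr(G)|=q^\ell$, the total number of irreducible $p$-modular characters being the number of $q$-restricted dominant weights.

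It therefore suffices to show that every $\chi\in\Irr(G)$ which is \emph{not} of $p$-defect zero satisfies $\chi(1)_p\le q^{N-\ell}$: then for $\chi$ in a full-defect block $B$ we obtain $|G|_p/\chi(1)_p\ge q^\ell\ge l(B)$, and since $|\IBr(\chi^0)|\le l(B)$ the stated inequality, hence Conjecture~B, follows at once. To bound $\chi(1)_p$ I write $\chi\in\cE(G,s)$ for a semisimple $s\in\bG^{*F}$ and set $\mathbf H=\cent{\bG^*}{s}$, a connected reductive group whose root system $\Psi_s$ is a subsystem of $\Phi$. By Lusztig's degree formula $\chi(1)=[\bG^{*F}:\mathbf H^F]_{p'}\,\psi(1)$ for a unipotent character $\psi$ of $\mathbf H^F$; as the first factor is prime to~$p$, this yields $\chi(1)_p=\psi(1)_p=q^{a_\psi}$, where $a_\psi\le N(\Psi_s)$ is the order of $q$ dividing the generic degree of $\psi$, with equality precisely for the Steinberg character of $\mathbf H$.

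Two cases remain. If $\Psi_s$ is a \emph{proper} subsystem of $\Phi$, then Lemma~\ref{lem:max N} gives $N(\Psi_s)\le N-\ell$, whence $\chi(1)_p=q^{a_\psi}\le q^{N-\ell}$ as needed; this disposes of every noncentral $s$, and since $\mathbf H$ need not be indecomposable it is essential here that Lemma~\ref{lem:max N} applies to arbitrary proper subsystems. If $\Psi_s=\Phi$, then $\mathbf H=\bG^*$ (equivalently $s$ is central, which for simply connected $\bG$ forces $s=1$) and $\psi$ is a genuine unipotent character of $G$. At this point I invoke the fact that the \emph{only} unipotent character with $a$-invariant exceeding $N-\ell$ is the Steinberg character, for which $a_\psi=N$; but Steinberg has $p$-defect zero and was already excluded, so again $\chi(1)_p\le q^{N-\ell}$. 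This last ``unipotent gap'' is the crux and the one place where Lemma~\ref{lem:max N} does not directly apply: I would verify it from Lusztig's classification of unipotent degrees, which for classical types reduces to a short estimate of the $n$-function on partitions and symbols (for type $A_n$ the runner-up value is attained and equals $\binom{n}{2}=N-\ell$), and for the finitely many exceptional types and the Suzuki--Ree groups ${}^2B_2$, ${}^2G_2$, ${}^2F_4$ is read off the explicit generic-degree tables. I expect this gap, rather than the reduction itself, to be the main obstacle, since it is the sole ingredient not captured by the clean combinatorics of Lemma~\ref{lem:max N}.
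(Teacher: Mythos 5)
Your proposal is correct and follows essentially the same route as the paper's proof: reduction to the simply connected covering group with $|G|_p=q^N$, Lusztig's Jordan decomposition combined with Lemma~\ref{lem:max N} to get $\chi(1)_p\le q^{N-n}$ for all series $\cE(G,s)$ with $s\ne1$, the same ``unipotent gap'' for $s=1$, and the count of $q^n$ restricted weights (minus the defect-zero Steinberg character) to bound $l(B)$. The only cosmetic differences are that the paper establishes the unipotent gap via Alvis--Curtis duality and Carter's degree formulas rather than directly from the classification of unipotent degrees, and that your passing claim that $C_{\bG^*}(s)$ is connected is not quite accurate for the adjoint group $\bG^*$ --- harmlessly so, since, as the paper notes, the component group $C_{G^*}(s)/C_{G^*}^\circ(s)$ has order prime to $p$.
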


\begin{proof}
The faithful $p$-blocks of the finitely many exceptional covering groups can be
seen to satisfy~(\ref{eq:height}) by inspection using the Atlas \cite{Atl}.
Note that the non-exceptional
Schur multiplier of a simple group of Lie type has order prime to the
characteristic (see e.g.~\cite[Tab.~24.2]{MT}). Thus we
may assume that $G$ is the universal non-exceptional covering group of its
simple quotient $S$. Hence, $G$ can be obtained as the group of fixed
points $\bG^F$ of a simple simply connected linear algebraic group $\bG$ over
an algebraic closure of $\FF_p$ under a Steinberg endomorphism
$F:\bG\rightarrow\bG$. \par
Now the order formula \cite[Cor.~24.6]{MT} shows that $|G|_p=q^N$, where $q$
is the underlying power of $p$ defining $G$, and $N=N(\Phi)=|\Phi^+|$ denotes
the number of positive roots of the root system $\Phi$ of $\bG$. According to
Lusztig's Jordan decomposition of ordinary irreducible characters of $G$, any
$\chi\in\Irr(G)$ lies in some Lusztig series $\cE(G,s)$, for $s$ a semisimple
element in the dual group
$G^*$, and $\chi(1)=|G^*:C_{G^*}(s)|_{p'}\ \psi(1)$ for some unipotent
character $\psi$ of $C_{G^*}(s)$. In particular, $\chi(1)_p=\psi(1)_p$ is
at most the $p$-part in $|C_{G^*}(s)|$, hence at most $q^M$ for $M$ the
number of positive roots of the connected reductive group $C_{\bG^*}^\circ(s)$.
(Observe that $|C_{G^*}(s):C_{G^*}^\circ(s)|$ is prime to $p$ by
\cite[Prop.~14.20]{MT}.)
Now first assume that $s\ne1$, so $s$ is not central in $G^*$ (which is of
adjoint type by our assumption on $\bG$). Then the character(s) in $\cE(G,s)$
with maximal $p$-part correspond to the Steinberg character of
$C_{\bG^*}^\circ(s)$, of degree $q^{N(\Psi)}$, where $\Psi$ is the root
system of $C_{\bG^*}^\circ(s)$, a proper subsystem of $\Phi$. In particular,
$|G|_p/\chi(1)_p\ge q^n$ by Lemma~\ref{lem:max N}, with
$n$ denoting the Lie rank of $\bG$.
\par
We next deal with the Lusztig series of $s=1$, that is, the unipotent
characters of $G$. If $\chi\in\Irr(G)$ is unipotent then its degree is given by
a polynomial in $q$, $\chi(1)=q^{a_\chi}f_\chi(q)$, where $f_\chi(X)$ has
constant term $\pm1$, so that $\chi(1)_p=q^{a_\chi}$ (see
\cite[13.8 and 13.9]{Ca}). We let $A_\chi$ denote
the degree of the polynomial $X^{a_\chi}f_\chi(X)$. Let $D(\chi)$ denote the
Alvis--Curtis dual of $\chi$. Then $D(\chi)(1)=q^{N-A_\chi}f_\chi'(q)$, for
some polynomial $f_\chi'$ of the same degree as $f_\chi$, so the degree
polynomial of $D(\chi)$ is of degree $N-a_\chi$. We are interested in
unipotent characters $\chi$ with large $a_\chi$, that is, those for which
the degree polynomial of the Alvis--Curtis dual has small degree $N-a_\chi$.
The Alvis--Curtis dual of the trivial character is the Steinberg character,
whose degree is just the full $p$-power $q^N$ of $|G|$. The smallest possible
degrees of degree polynomials of non-trivial unipotent characters for simple
groups of Lie type are easily read off from the explicit formulas
in \cite[13.8 and~13.9]{Ca}, they are given as follows:
$$\begin{array}{c|cccccccccc}
 \Phi& A_n& B_n,C_n& D_n\ (n\ge4)& G_2& F_4& E_6& E_7& E_8\\
 \hline
  N-a_\chi& n& 2n-1& 2n-3& 5& 11& 11& 17& 29\\
\end{array}$$
It transpires that again $|G|_p/\chi(1)_p\ge q^n$ in all cases.
\par
On the other hand, the $p$-modular irreducibles of $G$ are parametrised by
$q$-restricted weights of $\bG$, so there are $q^n$ of them, one of which is
the Steinberg character, of defect zero. Thus $l(B)<q^n$ for all $p$-blocks $B$
of $G$ of positive defect which shows Conjecture~B.
\end{proof}

We now turn to Conjecture~A. In view of
Corollary~\ref{cor:defchar-lb} only its second assertion remains to be
considered. The following result shows that the assumptions of
Conjecture~A are hardly ever satisfied:

\begin{prop}   \label{prop:defchar}
 Let $G$ be a quasi-simple group of Lie type in characteristic $p$. Assume that
 $G$ has an irreducible character $\chi\in\Irr(G)$ such that
 $|G|_p=p^2\,\chi(1)_p$. Then $G$ is a central quotient of $\SL_2(p^2)$,
 $\SL_3(p)$, $\SU_3(p)$ or $\Sp_4(p)$.
\end{prop}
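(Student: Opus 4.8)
The plan is to reduce to $G=\bG^F$ with $\bG$ simple simply connected and then to feed the hypothesis into the two degree estimates already established inside the proof of Corollary~\ref{cor:defchar-lb}. Exactly as there, I would first note that the generic part of the Schur multiplier of a simple group of Lie type is prime to $p$ (see \cite[Tab.~24.2]{MT}); since the centre of $\bG^F$ is then a $p'$-group, passing to the simply connected cover changes neither $|G|_p$ nor the $p$-parts of the character degrees, and a defect-$2$ character of any central quotient inflates to one of $\bG^F$. The finitely many groups whose Schur multiplier has order divisible by $p$ (such as $4.\PSL_3(4)$ or $2.\tw2B_2(8)$) are exceptional and I would dispose of them by direct inspection of \cite{Atl} and the known Brauer tables. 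So assume $G=\bG^F$, write $|G|_p=q^N$ with $N=N(\Phi)$, and let $n$ be the Lie rank of $\bG$.

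The key input is the chain of inequalities in the proof of Corollary~\ref{cor:defchar-lb}: for every non-trivial $\chi\in\Irr(G)$ one has $|G|_p/\chi(1)_p\ge q^{m}$, where $m$ is the smaller of the minimal gap $N(\Phi)-N(\Psi)$ over proper subsystems $\Psi$ (Lemma~\ref{lem:max N}, Table~\ref{tab:NPhi}) and the minimal degree $N-a_\chi$ of an Alvis--Curtis dual of a non-trivial unipotent character (the degree table in that proof); for the trivial character one has instead $|G|_p/\chi(1)_p=q^N$. Hence, if $\chi$ has defect~$2$, then $p^2=|G|_p/\chi(1)_p\ge q^m$, so $q^m\le p^2$; the trivial-character case $q^N=p^2$ is subsumed because $m\le N$. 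For the split types one reads off $m=n$ for $A_n$ and $B_n=C_n$, $m=2n-3$ for $D_n$, and $m=4,8,11,17,29$ for $G_2,F_4,E_6,E_7,E_8$.

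Writing $q=p^f$ for the split types, the condition becomes $fm\le2$, and I also need $q^N=|G|_p\ge p^2$ for a defect-$2$ character to exist at all. For $A_1$ we have $m=1$, so $f\le2$, while $q^N=q\ge p^2$ forces $f\ge2$; hence $f=2$ and $G=\SL_2(p^2)$. For $A_2$ and $B_2=C_2$ we have $m=2$, whence $f=1$, giving $\SL_3(p)$ and $\Sp_4(p)$ (and $|G|_p=p^3,p^4\ge p^2$). All remaining split types are excluded since $fm\ge3$: indeed $m=n\ge3$ for the higher classical types, and---this is the one place where the refined bound is essential, the Lie rank alone being only $2$---$m=4$ for $G_2$, while $m\ge8$ for the larger exceptional groups. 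For the twisted types I would use the cruder bound $|G|_p/\chi(1)_p\ge q^{n}$ (also from Corollary~\ref{cor:defchar-lb}), valid for all $G=\bG^F$: this at once excludes every twisted type of rank $\ge3$, namely $\tw2A_n\ (n\ge3)$, $\tw2D_n$, $\tw3D_4$ and $\tw2E_6$, while for $\tw2A_2=\SU_3$ it gives $q^2\le p^2$, i.e.\ $q=p$ and $G=\SU_3(p)$. Finally, for the Suzuki and Ree groups $\tw2B_2(q^2)$, $\tw2G_2(q^2)$, $\tw2F_4(q^2)$---where the parameter $q^2$ is at least $8,27,8$ for the quasi-simple members, the Tits group $\tw2F_4(2)'$ being treated in Proposition~\ref{prop:spor}---the quantity $|G|_p/\chi(1)_p$ exceeds $p^2$ throughout, as is immediate from their short, explicitly known lists of character degrees. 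Undoing the reduction, $G$ is a central quotient of $\SL_2(p^2)$, $\SL_3(p)$, $\SU_3(p)$ or $\Sp_4(p)$.

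I expect the genuine obstacle to lie with the very twisted groups: for the Suzuki and Ree families the defining parameter is not an integral power of $p$, so the clean identity $|G|_p=q^N$ and the estimate $q^m\le p^2$ must be read with $q=\sqrt{q^2}$, and it is cleaner to fall back on their explicit character degrees than to push the generic argument through. A second, purely bookkeeping, difficulty is the verification that none of the finitely many exceptional covering groups with $p$-divisible Schur multiplier produces a defect-$2$ character outside the four families above.
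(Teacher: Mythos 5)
Your proposal is correct and takes essentially the same route as the paper: the identical reduction via the Atlas to the simply connected group $\bG^F$, then Lusztig's Jordan decomposition splitting the problem into the subsystem-gap condition $N(\Phi)-N(\Psi)\le 2$ (Lemma~\ref{lem:max N}, Table~\ref{tab:NPhi}) for $s\ne1$ and the minimal $N-a_\chi$ table from the proof of Corollary~\ref{cor:defchar-lb} for unipotent characters, leaving exactly $A_1$ with $q=p^2$ and $A_2$, $B_2$ with $q=p$. Two harmless remarks: your inequality $|G|_p/\chi(1)_p\ge q^m$ should exclude the Steinberg character (which has defect zero, so it never meets the hypothesis anyway), and the paper eliminates $\tw2B_2(q^2)$ inside the type-$B_2$ analysis by noting the dual group has no centraliser with root system $A_1^2$, rather than by inspecting the Suzuki/Ree degree lists as you do --- both are valid.
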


\begin{proof}
Again the finitely many exceptional covering groups can be handled by
inspection using the Atlas \cite{Atl} and so as in the proof of
Corollary~\ref{cor:defchar-lb} we may assume that $G$ is the universal
non-exceptional covering group of its simple quotient $S$ and so can be
obtained as the group of fixed points of a simply connected simple algebraic
group under a Steinberg map.
\par
Using Lusztig's Jordan decomposition we see that our question boils down to
\begin{enumerate}[\rm(1)]
 \item determining the irreducible root systems $\Phi$ possessing a proper root
  subsystem $\Psi$ such that $N(\Psi)\ge N(\Phi)-2$, and
 \item finding the unipotent characters $\chi$ of $G$ such that
  $p^2\chi(1)_p\ge|G|_p$.
\end{enumerate}
\par
The first issue can easily be answered using Lemma~\ref{lem:max N}. According
to Table~\ref{tab:NPhi} only $\bG$ of types $A_1$, $A_2$ or $B_2$ are
candidates, the first with $q=p^2$, that is, $G=\SL_2(p^2)$, and the other two
only for $q=p$. For type $A_2$ this leads to $\SL_3(p)$ and $\SU_3(p)$, in
the case of type $B_2$ we have $|\Phi(B_2)|-|\Phi(B_1^2)|=2$, which only leads
to $G=\Sp_4(p)$. Indeed, for the twisted Suzuki groups $\tw2B_2(q^2)$, where
$q^2$ is even, there is no centraliser of root system $A_1^2$ in the dual group.
\par
Issue~(2) has already been partly discussed in the proof of
Corollary~\ref{cor:defchar-lb}. Using the list of maximal $q$-powers occurring
in unipotent character degrees given there it follows that examples can only
possibly arise if the root system of $\bG$ is of type $A_1$ or $A_2$.
\end{proof}

\begin{thm}   \label{thm:defchar}
 Let $G$ be a quasi-simple group of Lie type in characteristic $p$. Then
 Conjecture~A holds for $G$ and the prime $p$, except possibly for $\Sp_4(p)$
 with $7\le p\le 61$.
\end{thm}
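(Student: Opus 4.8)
The plan is to separate Conjecture~A into its two parts and to dispose of the first part uniformly. For part~(1) no new work is needed: the proof of Corollary~\ref{cor:defchar-lb} shows that every $p$-block $B$ of positive defect satisfies $l(B)<q^n$, where $q$ is the power of $p$ defining $G$ and $n$ is the Lie rank, and that $|G|_p/\chi(1)_p\ge q^n$ for all $\chi\in\Irr(G)$. If $\chi$ satisfies the hypothesis $|G|_p=p^2\chi(1)_p$ then its block has defect at least~$2$, and $p^2=|G|_p/\chi(1)_p\ge q^n>l(B)\ge|\IBr(\chi^0)|$, whence $|\IBr(\chi^0)|\le p^2-1$. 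So only the bound $d_{\chi\vhi}\le p$ of part~(2) remains, and here I would first reduce the groups in play. By Proposition~\ref{prop:defchar} the hypothesis of Conjecture~A can be met only when $G$ is a central quotient of $\SL_2(p^2)$, $\SL_3(p)$, $\SU_3(p)$ or $\Sp_4(p)$; for every other quasi-simple group of Lie type in defining characteristic the conjecture is vacuous. Since the ordinary and modular irreducibles of a central quotient $G/Z$ are precisely those of $G$ lying over the trivial character of $Z$, with the same decomposition numbers, it suffices to verify $d_{\chi\vhi}\le p$ for the four simply connected groups themselves.

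For the first three families this reduces to inspecting decomposition matrices in the defining characteristic, which are available. For $\SL_2(p^2)$ the relevant characters are those of $p'$-degree, and all defining-characteristic decomposition numbers of $\SL_2(q)$ equal $0$ or $1$ (a classical computation, consistent with the multiplicity-freeness of $\SL_2$ Weyl modules), so part~(2) is immediate. For $\SL_3(p)$ and $\SU_3(p)$ one has $|G|_p=p^3$, so the defect~$2$ characters are exactly those with $\chi(1)_p=p$; identifying them via Lusztig's Jordan decomposition as in Proposition~\ref{prop:defchar} and reading the corresponding rows off the known decomposition matrices shows all their entries are at most~$p$. The only work here is the bookkeeping of matching characters to rows.

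The genuinely difficult case is $G=\Sp_4(p)$, whose decomposition matrix in characteristic~$p$ is not known for general~$p$. Here $|G|_p=p^4$, so the defect~$2$ characters are those with $\chi(1)_p=p^2$. By the analysis in Proposition~\ref{prop:defchar} these come \emph{solely} from Lusztig series $\cE(G,s)$ of non-central semisimple $p'$-elements $s$ whose connected centraliser in the dual group has root system of type $A_1\times A_1$, the character $\chi$ being the one corresponding to the Steinberg character of that centraliser; no unipotent character of $\Sp_4(p)$ has defect~$2$, since the smallest degree polynomial of a nontrivial unipotent character in type $C_2$ is $N-a_\chi=3$. Thus the modular constituents of $\chi^0$ are controlled by simple modules $L(\lambda)$ for a restricted list of highest weights~$\lambda$.

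The plan for $\Sp_4(p)$ is then two-sided. For small primes ($p\le5$) I would compute the decomposition matrix directly and check $d_{\chi\vhi}\le p$. For large primes ($p\ge67$) the weights $\lambda$ that occur lie deep enough inside the dominant chamber that the required composition multiplicities in the relevant Weyl modules for the algebraic group of type $C_2$ are governed by the generic (large-$p$) pattern, in which the entries are bounded independently of~$p$ and hence are at most~$p$; one verifies this against that pattern. The hard part, and the reason for the stated exception, is the intermediate range $7\le p\le61$: there the groups are already too large for a direct determination of the decomposition matrix, yet the occurring weights are not yet far enough inside the chamber for the generic pattern to be established, so the bound $d_{\chi\vhi}\le p$ cannot presently be confirmed for those primes.
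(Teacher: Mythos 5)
Your overall skeleton matches the paper's: reduction to $\SL_2(p^2)$, $\SL_3(p)$, $\SU_3(p)$, $\Sp_4(p)$ via Proposition~\ref{prop:defchar}; part~(1) disposed of by the count of $q$-restricted weights ($l(B)<q^n\le p^2=|G|_p/\chi(1)_p$ for positive-defect blocks, which is exactly the argument of Corollary~\ref{cor:defchar-lb} that the paper re-runs case by case); and $\SL_2(p^2)$ via the classical fact (Srinivasan) that all decomposition numbers are $0$ or~$1$. For $\SL_3(p)$ and $\SU_3(p)$ your step is vaguer than the actual proof: the paper does not simply ``read rows off known matrices''. For the regular characters $\chi_r$ of degree $p(p^2+p+1)$ it uses Humphreys' bound that every projective indecomposable of $\SL_3(p)$ has degree at most $12p^3$, so $d_{\chi_r\vhi}\le 12p^3/\bigl(p(p^2+p+1)\bigr)\le p$ for $p\ge11$, together with GAP tables for $p\le7$; this is repairable from the same references, so I would only flag it as imprecise.

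The genuine gap is your treatment of $\Sp_4(p)$ for large $p$. There is no established ``generic large-$p$ pattern'' that you can verify against to bound the decomposition numbers of the relevant characters of degree $\tfrac12 p^2(p^2\pm1)$: knowledge of composition multiplicities of restricted Weyl modules for the algebraic group of type $C_2$ (which, incidentally, is available for quite small primes in rank~2, not only deep in the dominant chamber) does not directly yield the entries $d_{\chi\vhi}$ of the decomposition matrix of the finite group for these non-unipotent ordinary characters, since that requires expressing $\chi^0$ in the basis of Brauer characters. Moreover, under your own reasoning nothing explains why the argument should succeed at $p=67$ but fail at $p=61$; your stated rationale for the exceptional range does not correspond to any actual obstruction. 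The paper's mechanism is elementary and numerical: by Humphreys, every PIM of $\Sp_4(p)$ has degree at most $32p^4$, whence $d_{\chi\vhi}\le 32p^4/\chi(1)$, and this is $\le p$ precisely when $32p^4\le p^3(p^2+1)/2$, i.e.\ when $p>61$; the range $7\le p\le61$ is excluded exactly because this PIM-degree bound is then too weak and Brauer tables are known only for $p=3,5$ (GAP). You should replace the generic-pattern appeal by this PIM-degree argument — it is also the engine behind the $\SL_3(p)$ and $\SU_3(p)$ cases.
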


\begin{proof}
By Proposition~\ref{prop:defchar} we only need to consider the groups
$\SL_2(p^2)$, $\SL_3(p)$, $\SU_3(p)$ and $\Sp_4(p)$.
The ordinary character tables of all these groups are known and available
for example in the {\sf Chevie} system \cite{Chv}.
\par
The $p$-modular irreducibles of $G=\SL_2(p^2)$ are index by $p^2$-restricted
dominant weights, so there are exactly $p^2$ of them. One of them, the
Steinberg representation, is of defect~0, so any $p$-block of $G$ contains at
most $p^2-1$ irreducible Brauer characters. All decomposition numbers are equal
to~0 or~1 by Srinivasan \cite{Sr64}, which deals with this case.
\par
For $G=\SL_3(p)$ the only characters $\chi$ satisfying the assumptions of
Conjecture~A are the unipotent character $\chi_u$ of degree
$p(p+1)$ and the regular characters $\chi_r$ of degree $p(p^2+p+1)$.
The $p$-modular irreducibles of $G$ are parametrised by $p$-restricted weights,
so since $G$ is of rank~2 there are $p^2$ of them, one of which is the
Steinberg character, of defect zero. Thus $l(B)<p^2$ for all blocks of
positive defect which already shows~(1). Now assume that $p\ge11$.
By \cite[Tab.~1 and \S4]{Hu81}
$\chi_u^0$ has just two modular composition factors. Furthermore,
projective indecomposables of $G$ have degree at most $12p^3$ (see
\cite{Hu81}), so any decomposition number occurring for $\chi_r$ is at most
$12p^2/(p^2+p+1)$, which is smaller or equal to $p$ for $p\ge11$. The
decomposition numbers for $p\le7$ are contained in \cite{GAP}.
\par
The arguments for $G=\SU_3(p)$ are entirely similar, using the reference
\cite{Hu90} for decomposition numbers when $p\ge11$. The cases with
$3\le p\le7$ are contained in \cite{GAP}.
\par
Finally consider $G=\Sp_4(p)$. Here the only relevant characters $\chi$ are
those of degree $\frac{1}{2}p^2(p^2\pm1)$ parametrised by involutions in the
dual group with centraliser of type $A_1^2$; they only exist when $p$ is
odd, which we now assume. Then $G$ has two $p$-blocks of positive defect
both containing $(p^2-1)/2$ modular irreducibles. This already proves~(1).
The second claim follows as above using that the PIMs have degree at most
$32p^4$ (see \cite{Hu86}) whenever $32p^4\le p^3(p^2+1)/2$, so whenever
$p>61$. The Brauer tables of $\Sp_4(p)$ for $p=3,5$ are contained in \cite{GAP}.
\end{proof}

\subsection{Non-defining characteristic}
The current knowledge about decomposition numbers for blocks of groups of Lie
type in non-defining characteristic does not seem sufficient to prove even
Conjecture~A, not even for unipotent blocks in general. We hence just make
some preliminary observations.

\begin{exmp}   \label{exmp:GLn}
This example shows that characters in blocks of groups of Lie type in
non-defining characteristic can have rather large heights. Let $G=\GL_n(q)$,
and $\ell$ a prime dividing $q-1$. Then all unipotent characters lie in the
principal $\ell$-block $B_0$ of $G$, and so do all characters in Lusztig series
$\cE(G,s)$ for any $\ell$-element $t\in G^*\cong G$. Now assume that
$n=\ell^a$ for some $a\ge1$. Then $|G|_\ell=\ell^{cn+(n-1)/(\ell-1)}$ where
$\ell^c$ is the precise power of $\ell$ dividing $q-1$. Let $T\le\GL_n(q)$ be
a Coxeter torus, of order $q^n-1$, and $t\in T$ an element of maximal
$\ell$-power order. It can easily be seen that then
$o(t)=(q-1)_\ell\ell^a=\ell^{c+a}$ and $t$ is regular, so $\cE(G,t)$
consists of a single character $\chi$, say. Now $\chi(1)=|G:C_G(t)|$ and hence
$$\chi(1)_\ell=|G|_\ell/|T|_\ell=\ell^{cn+(n-1)/(l-1)-c-a},$$
so $\chi$ has height $c(n-1)+(n-1)/(l-1)-a$ and defect $c+a$. So the height
can become arbitrarily large by varying $q$ for fixed $n$ and in particular
it is not bounded in terms of the (relative) Weyl group. Moreover, the
unipotent characters form a basic set for $B_0$. As they are in bijection
with $\Irr(\fS_n)$, $l(B_0)$ is the number of partitions of $n$ and
so grows exponentially in $n$.   \par
Choosing $a=c=1$, that is, $n=\ell$ and $\ell||(q-1)$ we obtain principal
$\ell$-blocks of defect $\ell+1$ containing a character of defect~2.
These examples also give rise to similar blocks for the quasi-simple groups
$\SL_n(q)$. We will deal with them in Proposition~\ref{prop:SLn} below.
\end{exmp}

Now let $G$ be the group of fixed points of a connected reductive linear
algebraic group $\bG$ under a Frobenius endomorphism $F$ defining an
$\FF_q$-structure.
Let $\ell$ be a prime different from the defining characteristic of
$G$ and set $e=e_\ell(q)$, the order of $q$ modulo~$\ell$ if $\ell>2$,
respectively the order of $q$ modulo~4 if $\ell=2$. We write
$a_G(e)$ for the precise power of $\Phi_e$ dividing the order polynomial of
the derived subgroup $[\bG,\bG]$, that is, since $\bG=[\bG,\bG]Z(\bG)$,
the precise power of $\Phi_e$ dividing the order polynomial of $\bG/Z(\bG)$.
If $e$ is clear from the context, we will just denote it by $a_G$.

We start with an observation on defects of characters in unipotent
$e$-Harish-Chandra series of $G$.

\begin{lem}   \label{lem:a-value}
 Let $\bG,F,q$ be as above. Let $\chi$ be a unipotent character of $G=\bG^F$
 lying in the $e$-Harish-Chandra series of the $e$-cuspidal pair $(\bL,\la)$.
 Let $\ell$ be a prime with $e=e_\ell(q)$ and set $\ell^c||\Phi_e(q)$.
 If $\chi$ has defect~2, then $c(a_G-a_L)\le2$. Moreover, if $c(a_G-a_L)=2$
 then $\chi$ is $e\ell^a$-cuspidal for all $a>0$.
\end{lem}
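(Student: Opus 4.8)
The plan is to extract the $\ell$-defect of $\chi$ directly from the cyclotomic factorisation of its degree and to compare it with the $\Phi_e$-part prescribed by the $e$-Harish-Chandra series of $(\bL,\la)$. I would write the generic degree of $\chi$ as $q^{a_\chi}\prod_d\Phi_d(q)^{n_d}$ and let $v_{\Phi_e}$ denote the $\Phi_e$-adic valuation of a polynomial. Since $\ell\ne p$, the full $p$-power $q^{a_\chi}$ of $\chi(1)$ cancels against $|G|_p$, and the only cyclotomic factors $\Phi_d(q)$ divisible by $\ell$ are those with $d=e\ell^i$ for some $i\ge0$, where $v_\ell(\Phi_e(q))=c$ and $v_\ell(\Phi_{e\ell^i}(q))\ge1$ for $i\ge1$. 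Collecting terms gives
\[
 d_\chi=c\,v_{\Phi_e}\!\Bigl(\tfrac{|G|_{p'}}{\chi(1)}\Bigr)
  +\sum_{i\ge1}v_\ell(\Phi_{e\ell^i}(q))\,
   v_{\Phi_{e\ell^i}}\!\Bigl(\tfrac{|G|_{p'}}{\chi(1)}\Bigr),
\]
in which every summand is nonnegative because $\chi(1)$ divides $|G|_{p'}$. For $\ell=2$ the congruence convention defining $e$ keeps all of this valid; the only caveat is that both $\Phi_1(q)$ and $\Phi_2(q)$ are even, which can merely add further nonnegative terms on the right.

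The key input I would invoke is the behaviour of the $\Phi_e$-part of degrees under $e$-Harish-Chandra theory. Because $\bL$ is $e$-split, the index $|G:\bL^F|_{p'}$ is prime to $\Phi_e$, and the relative degrees attached to $W_{\bG}(\bL,\la)$ are prime to $\Phi_e$ as well; hence $e$-Harish-Chandra induction preserves the $\Phi_e$-part of degrees, and since the $e$-cuspidal character $\la$ carries the full $\Phi_e$-part of $|[\bL,\bL]^F|$, one obtains $v_{\Phi_e}(\chi(1))=a_L$ (this is the Brou\'e--Malle--Michel machinery). On the other hand $v_{\Phi_e}(|G|_{p'})=a_G+z$, where $z\ge0$ is the $\Phi_e$-part of the central torus of $\bG$ (so $z=0$ when $\bG$ is semisimple). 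Thus $v_{\Phi_e}(|G|_{p'}/\chi(1))=a_G-a_L+z\ge a_G-a_L$, and the displayed identity yields $d_\chi\ge c(a_G-a_L)$. Since $\chi$ has defect~$2$ this gives $c(a_G-a_L)\le 2$, which is the first assertion.

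For the second assertion I would assume $c(a_G-a_L)=2$. Together with $d_\chi=2$ this makes $d_\chi\ge c(a_G-a_L)$ an equality, so $cz+\sum_{i\ge1}(\cdots)=0$ with all terms nonnegative. As $c\ge1$ this forces $z=0$, and since each coefficient $v_\ell(\Phi_{e\ell^i}(q))\ge1$ it forces $v_{\Phi_{e\ell^i}}(|G|_{p'}/\chi(1))=0$ for every $i\ge1$. Now fix $a>0$ and put $e'=e\ell^a$: then $v_{\Phi_{e'}}(\chi(1))=v_{\Phi_{e'}}(|G|_{p'})=a_G(e')+z'$, with $z'\ge0$ the $\Phi_{e'}$-part of the central torus. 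Since $\chi$ is unipotent its degree divides $|[\bG,\bG]^F|_{p'}$, whence $v_{\Phi_{e'}}(\chi(1))\le a_G(e')$; combined with the previous equality this squeezes $z'=0$ and $v_{\Phi_{e'}}(\chi(1))=a_G(e')$. The latter says exactly that the $\Phi_{e'}$-part of $\chi(1)$ is maximal, the standard criterion for $\chi$ to be $e'$-cuspidal. As $a>0$ was arbitrary, $\chi$ is $e\ell^a$-cuspidal for all $a>0$.

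The technical heart, and the only place where I expect to need the full Brou\'e--Malle--Michel / Cabanes--Enguehard theory, is the identity $v_{\Phi_e}(\chi(1))=a_L$: one must know that $e$-Harish-Chandra induction neither creates nor destroys $\Phi_e$-factors in degrees and that $e$-cuspidal degrees are $\Phi_e$-maximal. Everything else is elementary bookkeeping with cyclotomic valuations, and it is robust precisely because the central-torus term $z$ and the higher-$\Phi_{e\ell^i}$ terms are nonnegative: they can only strengthen the inequality in part~(1), and are forced to vanish at equality in part~(2). The slight extra care needed for $\ell=2$ and for a non-semisimple centre is entirely absorbed into these nonnegativity observations.
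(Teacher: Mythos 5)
Your argument is correct and takes essentially the same route as the paper's own proof: both rest on the Brou\'e--Malle--Michel facts that the $\Phi_e$-part of the degree polynomial is constant along the $e$-Harish-Chandra series and equals $\Phi_e^{a_L}$ for an $e$-cuspidal character, whence $\ell^{c(a_G-a_L)}$ divides $|G|_\ell/\chi(1)_\ell=\ell^{2}$, and in the equality case every remaining nonnegative cyclotomic contribution (in particular that of each $\Phi_{e\ell^a}$) is forced to vanish, which is exactly the degree criterion for $e\ell^a$-cuspidality. Your explicit valuation identity and the remarks on the central torus and on $\ell=2$ merely spell out bookkeeping that the paper absorbs into its ``at least'' estimate.
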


\begin{proof}
As the unipotent character $\chi$ lies in the $e$-Harish-Chandra series of
$(\bL,\la)$, the $\Phi_e$-part of its degree polynomial agrees with
the one of $\la$ (see \cite{BMM}). As $\la$ is $e$-cuspidal, the
$\Phi_e$-part in its degree polynomial equals $a_L$ \cite[Prop.~2.4]{BMM}. So
$|G|/\chi(1)$ is divisible by at least $\Phi_e^{a_G-a_L}$ and hence by
$\ell^{c(a_G-a_L)}$. If $\chi$ has defect~2 this implies that $c(a_G-a_L)\le2$.
If $c(a_G-a_L)=2$ then, since $\ell|\Phi_{e\ell^a}$, $\chi(1)$ must be
divisible by the same power of $\Phi_{e\ell^a}$ as $|G|$ for all $a>0$, that
is, $\chi$ must be $e\ell^a$-cuspidal.
\end{proof}

Now assume that $\ell\ge5$ is a prime that is good for $\bG$. By the main
result of Cabanes--Enguehard \cite{CE94} the
unipotent $\ell$-blocks of $G$ are then parametrised by $e$-cuspidal unipotent
pairs $(\bL,\la)$ up to conjugation. Let $B=B_G(\bL,\la)$ be a unipotent
$\ell$-block of $G$. Then according to \cite[Thm.]{CE94} the irreducible
characters in $B$ are the constituents of $R_{G_t}^G(\hat t\chi_t)$ where $t$
runs over $\ell$-elements in $G^*$, $\bG_t$ is a Levi subgroup of $\bG$ dual
to $C_{\bG^*}^\circ(t)$, $\hat t$ is the linear character of $G_t$ dual to $t$,
and $\chi_t$ lies in the unipotent $\ell$-block $B_{G_t}(\bL_t,\la_t)$, where
$(\bL_t,\la_t)$ is a unipotent $e$-cuspidal pair of $\bG_t$ such that
$[\bL,\bL]=[\bL_t,\bL_t]$ and $\la,\la_t$ have the same restriction to
$[\bL,\bL]^F$. Now as $\bG_t^*=C_{\bG^*}(t)$, $R_{G_t}^G$ induces a bijection
(with signs) $\cE(G_t,\hat t)\longrightarrow\cE(G,\hat t)$ such that degrees
are multiplied by $|G:G_t|_{p'}$. In particular this shows that $\chi$ and
$\chi_t$ have the same $\ell$-defect.

\begin{prop}   \label{prop:smallcent}
 Let $\bG,F,q,\ell$ be as above. Assume that the principal $\ell$-block of
 $G$ contains a character of $\ell$-defect~2. Then a Sylow $\ell$-subgroup
 $S$ of $G^*$ contains an element $t$ with centraliser $|C_S(t)|=\ell^2$.
\end{prop}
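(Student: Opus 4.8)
The plan is to push a defect-$2$ character through the Cabanes--Enguehard parametrisation of unipotent $\ell$-blocks recalled above, apply Lemma~\ref{lem:a-value}, and then convert the resulting defect computation into a centraliser order by a Sylow argument. Since $\ell\ge5$ is good for $\bG$, the principal $\ell$-block of $G$ is the unipotent block attached to the $e$-cuspidal pair $(\bL_0,1)$, where $\bL_0$ is the centraliser of a Sylow $\Phi_e$-torus of $\bG$, so that $a_{L_0}=0$. By hypothesis this block contains a character $\chi$ of defect~$2$. Writing $\chi$ as a constituent of $R_{G_t}^G(\hat t\chi_t)$ for a suitable $\ell$-element $t\in G^*$, we obtain a unipotent character $\chi_t$ of $G_t=\bG_t^F$ lying in the \emph{principal} block of $G_t$; the associated unipotent $e$-cuspidal pair $(\bL_t,\la_t)$ satisfies $[\bL_t,\bL_t]=[\bL_0,\bL_0]$, so $\la_t$ is trivial and $a_{L_t}=a_{L_0}=0$. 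As noted above, $R_{G_t}^G$ multiplies degrees by the $p'$-number $|G:G_t|_{p'}$, so $\chi$ and $\chi_t$ have the same $\ell$-defect, namely~$2$.

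I would first reduce the sought centraliser to the $\ell$-part of $|G_t|$. As $\bG_t^*=C_{\bG^*}^\circ(t)$ we have $|G_t|=|C_{G^*}^\circ(t)|$, and since for good $\ell$ the component group $C_{\bG^*}(t)/C_{\bG^*}^\circ(t)$ has order prime to $\ell$, it follows that $|C_{G^*}(t)|_\ell=|G_t|_\ell$. Now $t\in Z(C_{G^*}(t))$, so $\langle t\rangle$ is a normal $\ell$-subgroup and lies in every Sylow $\ell$-subgroup $P$ of $C_{G^*}(t)$. Picking $S\in\Syl_\ell(G^*)$ with $P\le S$, every element of $P$ centralises $t$, whence $P\le C_S(t)$; conversely $C_S(t)$ is an $\ell$-subgroup of $C_{G^*}(t)$, so $|C_S(t)|\le|C_{G^*}(t)|_\ell=|P|$. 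Thus $C_S(t)=P$ and $|C_S(t)|=|G_t|_\ell$, and it remains to prove $|G_t|_\ell=\ell^2$.

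The key point is that $\chi_t$ turns out to have $\ell'$-degree. Indeed, since $\chi_t$ lies in the $e$-Harish-Chandra series of $(\bL_t,\la_t)$, the $\Phi_e$-part of $\chi_t(1)$ equals that of $\la_t$, which equals $a_{L_t}=0$ as $\la_t$ is $e$-cuspidal \cite{BMM}. On the other hand Lemma~\ref{lem:a-value} applied to $\chi_t$ yields $c\,a_{G_t}\le2$, so the $\Phi_e$-multiplicity $a_{G_t}$ of $[\bG_t,\bG_t]$ is at most~$2$; a short inspection of the reflection degrees then shows that a semisimple group with at most two reflection degrees divisible by $e$ has none divisible by $e\ell$ once $\ell\ge5$, so no $\Phi_{e\ell^a}$ with $a\ge1$ divides the order polynomial of $[\bG_t,\bG_t]$. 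As a unipotent degree only involves the cyclotomic factors of $[\bG_t,\bG_t]$, it follows that $\chi_t(1)$ is divisible by none of the cyclotomic polynomials $\Phi_{e\ell^a}$ ($a\ge0$) carrying the prime $\ell$, i.e.\ $\chi_t(1)_\ell=1$. As $\chi_t$ has defect~$2$ this gives $|G_t|_\ell=\chi_t(1)_\ell\cdot\ell^2=\ell^2$, and therefore $|C_S(t)|=\ell^2$.

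The step I expect to need the most care is the reflection-degree inspection underpinning the vanishing of all $\Phi_{e\ell^a}$-contributions to $|[\bG_t,\bG_t]|$: one must check, uniformly across the classical series and case by case for the exceptional types, that $a_{G_t}\le2$ together with $\ell\ge5$ really precludes any reflection degree divisible by $e\ell$ (the point being that such a degree would be at least $5e$ and thus force further degrees divisible by $e$). Two further matters must be handled with the standard machinery but should cause no trouble: the passage between $C_{G^*}(t)$ and its identity component, so that disconnected centralisers arising when $\bG$ is not simply connected do not affect the $\ell$-part, and the verification that the Cabanes--Enguehard description together with the constancy of the $\Phi_e$-part along an $e$-Harish-Chandra series is available for all primes $\ell\ge5$ under consideration.
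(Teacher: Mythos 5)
There is a genuine gap, and it sits exactly where your argument crosses from the connected centraliser back to the full one. Your claim that ``for good $\ell$ the component group $C_{\bG^*}(t)/C_{\bG^*}^\circ(t)$ has order prime to $\ell$'' is false: goodness of $\ell$ constrains the root system but not the fundamental group, which is what bounds these component groups. Take $\bG=\SL_\ell$ (type $A_{\ell-1}$, where every prime is good), so $G^*=\PGL_\ell(q)$, with $\ell\mid(q-1)$ and $\ell\ge5$, and let $t$ be a regular $\ell$-element of a Coxeter torus. Then $C_{\bG^*}^\circ(t)$ is that torus, $C_{\bG^*}(t)/C_{\bG^*}^\circ(t)\cong C_\ell$, and $|C_{G^*}(t)|=\ell(q^\ell-1)/(q-1)$, so $|C_{G^*}(t)|_\ell=\ell^2$ while $|G_t|_\ell=|C^\circ_{G^*}(t)|_\ell=\ell$. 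This is not a peripheral configuration: it is precisely case (1) of Lemma~\ref{lem:centPGL} and the situation exploited in Proposition~\ref{prop:SLn}, i.e.\ the main example the proposition is designed to capture. In this case your intermediate conclusion $|G_t|_\ell=\ell^2$ is also wrong, which shows that the defect transfer you rely on (``$\chi$ and $\chi_t$ have the same $\ell$-defect'') is only valid up to the component group: when $C_{\bG^*}(t)$ is disconnected, $R_{G_t}^G(\hat t\chi_t)$ splits into several constituents whose degrees pick up a factor from the component group, and the defect of $\chi$ is computed by Jordan decomposition from the \emph{full} centraliser, $\chi(1)=|G^*:C_{G^*}(t)|_{p'}\,\psi(1)$. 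The paper's proof does its decisive count on the full centraliser $\bC:=C_{\bG^*}(t)$: from $c\,a_{G_t}\le2$ it transfers the $\Phi_e$-multiplicity bound to $\bC$, checks from the order formulas that the Sylow $\ell$-subgroups of $C=\bC^F$ are abelian and lie in a Sylow $e$-torus of $\bC$ (using $\ell>3$), and then the defect-$2$ condition forces $|C|_\ell=\ell^2$ directly, with no need for any statement about the component group being an $\ell'$-group.

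The rest of your argument matches the paper's route and is sound: the identification of the principal block with the pair $(\bL_0,1)$ and $a_{L_0}=0$, the application of Lemma~\ref{lem:a-value} to get $a_{G_t}\le2$, and the reflection-degree inspection (your version of the paper's ``inspection of the order formulas'') are all in order, modulo the case-by-case diligence you yourself flag. Your closing Sylow argument, showing $C_S(t)$ equals a Sylow $\ell$-subgroup of $C_{G^*}(t)$ and hence $|C_S(t)|=|C_{G^*}(t)|_\ell$, is correct and usefully makes explicit a step the paper leaves implicit --- but note that it targets the $\ell$-part of the full centraliser, which is exactly why your computation must be carried out in $\bC$ rather than in $\bG_t$.
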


\begin{proof}
The principal $\ell$-block $B_0$ of $G$ is the $\ell$-block above the
$e$-cuspidal pair $(\bL,1_L)$, where $\bL$ is the centraliser of a Sylow
$e$-torus of $\bG$. In particular $a_L=0$. Now let $\chi\in\Irr(B_0)$ of
defect~2.
By the result of Cabanes and Enguehard cited above, there is an $\ell$-element
$t\in G^*$ and a unipotent character $\chi_t\in\Irr(G_t)$ of defect~2 in the
$e$-Harish-Chandra series of $(\bL_t,1_{L_t})$ with $[\bL_t,\bL_t]=[\bL,\bL]$.
By Lemma~\ref{lem:a-value} this implies that $ca_{G_t}\le2$. In particular
$\Phi_e$ divides the order polynomial of $[\bG_t,\bG_t]$ at most twice, and
so the same statement holds for the dual group $\bC:=C_{\bG^*}(t)$. An
inspection of the order formulas of the finite reductive groups shows that
then the Sylow $\ell$-subgroups of $C$ are abelian (using that $\ell>3$), and
hence contained in a Sylow $e$-torus of $\bC$. For $\chi_t$ to have defect~2
this forces $|C|_\ell=\ell^2$, so $t$ is as claimed.
\end{proof}

We will not attempt to classify the cases when Sylow $\ell$-subgroups of
finite reductive groups contain $\ell$-elements with this property, even though
that seems possible, since in most of these cases our knowledge on
decomposition numbers would not suffice to settle Conjecture~A anyway. We just
discuss one particular case.

\begin{lem}   \label{lem:centPGL}
 Let $G=\PGL_n(q)$ with $n\ge4$ and $\ell$ a prime dividing $q-1$. If $G$
 contains an $\ell$-element $t$ with $|C_G(t)|_\ell=\ell^2$ then one of
 \begin{enumerate}[\rm(1)]
  \item $n=\ell$ and $C\cong\GL_1(q^\ell)$,
  \item $n=\ell+1$ and $C\cong\GL_\ell(q)\times\GL_1(q)$, or
  \item $n=\ell^2$ and $C\cong\GL_1(q^{\ell^2})$;
 \end{enumerate}
 where $C$ denotes the centraliser in $\GL_n(q)$ of a preimage of $t$ under the
 natural map.
\end{lem}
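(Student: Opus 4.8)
The plan is to lift $t$ to an element $g\in\GL_n(q)$ (the centraliser $C=C_{\GL_n(q)}(g)$ being independent of the chosen lift, since the kernel $Z=\FF_q^*$ of $\GL_n(q)\to\PGL_n(q)$ is central) and to read off everything from the eigenvalue geometry of $g$. Because $\ell\mid q-1$ we have $e_\ell(q)=1$, so, writing $(q-1)_\ell=\ell^c$, every eigenvalue of $g$ over $\overline{\FF}_q$ is a fixed scalar times an $\ell$-power root of unity; an eigenvalue of order $\ell^{c+j}$ generates the extension $\FF_{q^{\ell^{j}}}$ and hence lies in a Frobenius orbit of size $\ell^{j}$ (and in an orbit of size $1$ if its order divides $\ell^{c}$). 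Collecting the rational primary components, the centraliser takes the standard form $C\cong\prod_i\GL_{m_i}(q^{d_i})$, where each $d_i$ is a power of $\ell$ recording an orbit size, the $m_i$ are the multiplicities, and $\sum_i m_i d_i=n$. First I would set up this dictionary between $\ell$-elements $t$ and their \emph{orbit data} $\{(m_i,d_i)\}$.

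Next I would turn the defect condition into arithmetic. Since $\ell\neq p$ is odd, lifting-the-exponent gives $v_\ell(q^{dk}-1)=c+v_\ell(dk)$, whence
\[
 |C|_\ell=\ell^{\,\sum_i\bigl(c\,m_i+m_i v_\ell(d_i)+v_\ell(m_i!)\bigr)}.
\]
The delicate point is that $|C_G(t)|_\ell$ does not simply equal $|C|_\ell/\ell^{c}$: passing to $\PGL_n(q)$ also contributes the scalar factor measuring how many $\lambda\in\FF_q^*$ satisfy $\lambda g\sim g$ in $\GL_n(q)$. This set $\Lambda$ is automatically an $\ell$-group (a non-trivial $\ell'$-part of such a $\lambda$ would move eigenvalues off the $\ell$-power-root-of-unity locus), and it is governed by the orbit data: a single full Frobenius orbit admits a non-trivial scaling symmetry, whereas the presence of a rigid fixed eigenvalue forces $\Lambda=1$. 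I would therefore compute $|C_G(t)|_\ell=|C|_\ell\cdot|\Lambda|_\ell/\ell^{c}$ and determine $|\Lambda|_\ell$ case by case from the shape of $\{(m_i,d_i)\}$.

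With these formulas in hand the enumeration is essentially forced. Imposing $|C_G(t)|_\ell=\ell^2$ makes the exponent above extremely small: every summand $c\,m_i\ge c\ge1$, so there can be at most two orbits, each $m_i$ must satisfy $v_\ell(m_i!)=0$ (hence $m_i<\ell$), and the only admissible orbit lengths are $1$, $\ell$ and $\ell^2$. Running through the few surviving shapes and discarding those which, after the $\Lambda$-correction, give $\ell$-part $\ell$ or at least $\ell^3$, the configurations that remain for $n\ge4$ are a single Frobenius orbit of length $\ell$ (a Coxeter torus), a single orbit of length $\ell$ together with one rigid fixed eigenvalue, and a single orbit of length $\ell^2$; these correspond to $n=\ell$, $n=\ell+1$, $n=\ell^2$ and to the three centralisers listed in (1)--(3), the hypothesis $n\ge4$ removing the degenerate small-rank coincidences.

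I expect the genuine obstacle to be exactly the $\GL$-to-$\PGL$ bookkeeping through $\Lambda$. Because passing to $\PGL_n(q)$ simultaneously divides by the central torus and may re-insert a power of $\ell$ through a scaling symmetry, one and the same orbit shape can land on $\ell$, $\ell^2$ or $\ell^3$; pinning the $\ell$-part to the value $\ell^2$ \emph{exactly}, rather than merely bounding it, is where all the care lies, and it is here that the precise isomorphism type of $C$ in each case has to be verified against the scalar contribution. Once $|\Lambda|_\ell$ is controlled for each shape, the remaining combinatorial step of excluding the mixed two-orbit configurations is routine, and the lemma follows.
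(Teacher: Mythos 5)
Your skeleton is the same as the paper's: lift $t$ to an element of $\ell$-power order in $\GL_n(q)$, observe that $C\cong\GL_{n_1}(q^{a_1})\times\cdots\times\GL_{n_r}(q^{a_r})$ where each $a_i$ is a power of $\ell$ (each projection of the lift is a scalar generating $\FF_{q^{a_i}}$, so $\ell\mid\Phi_{a_i}(q)$), compute $\ell$-valuations via $v_\ell(q^{\ell^f k}-1)=c+f+v_\ell(k)$, and run a short enumeration. The one genuine difference is your scalar-symmetry group $\Lambda$: the paper never touches it. Since $C/Z$ embeds into $C_G(t)$, the hypothesis already yields the one-sided bound $|C|_\ell\le\ell^{2+c}$, and because the lemma asserts only a list of necessary possibilities, that crude bound suffices; the extra component of $C_{\PGL_n(q)}(t)$ coming from elements with $xgx^{-1}=\lambda g$ can only enlarge the centraliser, so for necessity it may be ignored. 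Your diagnosis that ``pinning the $\ell$-part to exactly $\ell^2$ is where all the care lies'' therefore misidentifies the difficulty of the stated lemma. On the other hand, your orbit analysis of case (2) gives $C\cong\GL_1(q^\ell)\times\GL_1(q)$ with $c=1$, and here you are more correct than the printed statement: ``$\GL_\ell(q)\times\GL_1(q)$'' is a typo (that group has $\ell$-part $\ell^{(\ell+1)c+1}>\ell^{2+c}$), and indeed Proposition~\ref{prop:SLn} uses exactly the maximal torus $\GL_1(q^\ell)\times\GL_1(q)$, of image order $q^\ell-1$.

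There is, however, an internal inconsistency in your execution at case (3). For a single Frobenius orbit of length $\ell^f$ your own formula gives $|\Lambda|=\ell^{\min(c,f)}$: the admissible scalings are $\lambda=\zeta^{q^j-1}$, and $\lambda\in\FF_q^\times$ forces $v_\ell(j)\ge f-c$, so $|C_G(t)|_\ell=\ell^{f+\min(c,f)}$. This equals $\ell^2$ only for $f=1$; for $f=2$ one gets $\ell^{2+\min(c,2)}\ge\ell^3$. Hence the filter you describe (``discard those which, after the $\Lambda$-correction, give $\ell$-part $\ell$ or at least $\ell^3$'') would eliminate the single orbit of length $\ell^2$, contradicting your claim that it survives; case (3) of the lemma is in fact vacuous, which the paper only establishes later, in Proposition~\ref{prop:SLn}, by this same finer count. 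This slip does not refute the lemma as stated—retaining an impossible case in a list of necessary conditions is harmless—but as written your argument simultaneously asserts exactness and reports an output incompatible with it. The clean fixes are either to drop the $\Lambda$-correction and argue, as the paper does, purely from $|C|_\ell\le\ell^{2+c}$ (whereupon (3) legitimately remains), or to carry the exact computation through consistently and prove the sharper statement with (3) deleted. A final small point: your lifting-the-exponent step needs $\ell$ odd, which the lemma's hypotheses do not state, though the surrounding context ($\ell\ge5$ good for $\bG$) supplies it, and the inequalities needed for necessity survive for $\ell=2$ anyway.
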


\begin{proof}
Let $\hat t$ be a preimage of $t$ of $\ell$-power order in $\hat G:=\GL_n(q)$
under the natural surjection. Then $|C_{\hat G}(\hat t)|_\ell\le\ell^{2+c}$
where $\ell^c$ is the precise power of $\ell$ dividing $q-1$. Now
$$C:=C_{\hat G}(\hat t)\cong
  \GL_{n_1}(q^{a_1})\times\cdots\times\GL_{n_r}(q^{a_r})$$
for suitable $n_i,a_i\ge1$ with $\sum n_ia_i=n$. Let $s_i$ denote the projection
of $\hat s$ into the $i$th factor of $C$. Then $s_i\in
Z(\GL_{n_i}(q^{a_i}))\cong\FF_{q^{a_i}}^\times$ is an $\ell$-element generating
the field $\FF_{q^{a_i}}$. In particular, $\ell$ divides the cyclotomic
polynomial $\Phi_{a_i}(q)$, and so $a_i=\ell^{f_i}$ for some $f_i\ge0$ (see
e.g.~\cite[Lemma~25.13]{MT}). Then $|C|_\ell\ge \sum_{i=1}^r (c+f_i)n_i$.
Our assumption thus implies that $\sum n_i\le3$.
\par
If $n_1=3$ then $c=1$, $f_1=0$, so $a_1=1$ and $n=\sum n_i=3$, which was
excluded. Now assume that $\sum n_i=2$. If $n_1=2$ then $C\cong\GL_2(q^{n/2})$
and $f_1=0$, whence $n=2$. If $n_1=n_2=1$ then
$C\cong\GL_1(q^{a_1})\times \GL_1(q^{a_2})$ and $2c+f_1+f_2\le2+c$, so $c=1$
and $f_1+f_2\le1$. This leads to $C\cong\GL_1(q)^2\le\GL_2(q)$ or to
$C\cong\GL_\ell(q)\times\GL_1(q)\le\GL_{\ell+1}(q)$, with $\ell||(q-1)$.
Finally assume that $\sum n_i=1$, so $n_1=1$ and
$C\cong\GL_1(q^n)$ with $n=\ell^f$ and $c+f\le3$. If $c=3$ then $f=0$ and
$n=1$; if $c=2$ then we find $C\cong\GL_1(q^\ell)\le\GL_\ell(q)$, and if
$c=1$ then we also could have $C\cong\GL_1(q^{\ell^2})\le\GL_{\ell^2}(q)$.
\end{proof}

\begin{prop}   \label{prop:SLn}
 Conjecture~A holds for the principal $\ell$-block of $\SL_n(q)$ when
 $\ell|(q-1)$.
\end{prop}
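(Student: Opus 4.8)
The plan is to reduce, via the two preceding results, to a short list of groups and then to control their decomposition matrices through the $\ell$-modular theory of $\GL_n(q)$ at $e=e_\ell(q)=1$. Throughout I would assume $\ell\ge5$ (all primes are good for type $A$, so Proposition~\ref{prop:smallcent} applies), treating $\ell\in\{2,3\}$ separately. By Proposition~\ref{prop:smallcent}, a defect-$2$ character in the principal $\ell$-block of $G=\SL_n(q)$ forces the dual group $G^*=\PGL_n(q)$ to contain an $\ell$-element $t$ with $|C_{G^*}(t)|_\ell=\ell^2$. For $n\ge4$, Lemma~\ref{lem:centPGL} then restricts $n$ to $\{\ell,\ell+1,\ell^2\}$ and pins down the centraliser in $\GL_n(q)$ of a preimage of $t$. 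The cases $n\le3$ I would dispose of directly: $\SL_2(q)$ has cyclic Sylow $\ell$-subgroups, so Conjecture~A holds for every block with cyclic defect (there $d_{\chi\vhi}\le1$ and $l(B)\le\ell-1$), while $\SL_3(q)$ is settled from its completely known decomposition matrices.

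For the surviving $n$ I would work through $\GL_n(q)$, exploiting the normal inclusion $\SL_n(q)\trianglelefteq\GL_n(q)$ with cyclic quotient $\GL_n(q)/\SL_n(q)\cong C_{q-1}$, so that every ordinary and every Brauer character of $\SL_n(q)$ is a Clifford constituent of one of $\GL_n(q)$. The advantage of $\GL_n(q)$ is that its dual again has connected centralisers, so Lusztig's Jordan decomposition is clean: the characters lying over $\chi$ belong to the Lusztig series of a semisimple $\ell$-element $\tilde t$ whose centraliser is a Singer torus $\GL_1(q^\ell)$ or $\GL_1(q^{\ell^2})$ (cases $n=\ell,\ell^2$), or the Levi $\GL_\ell(q)\times\GL_1(q)$ (case $n=\ell+1$). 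In the torus cases the relevant $\GL_n(q)$-characters are the Deligne--Lusztig characters $\pm R_{\bT}^{\bG}(\theta)$ in general position, as in Example~\ref{exmp:GLn}; a short computation of $\ell$-parts of degrees, comparing $\GL_n(q)$- and $\SL_n(q)$-defects across the cyclic quotient, then records exactly which of the three cases genuinely produces characters of defect~$2$ after restriction to $\SL_n(q)$ --- the primary source being $n=\ell$, in accordance with Example~\ref{exmp:GLn}.

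I would then bound the decomposition data on the $\GL_n(q)$ side. Since $\ell\mid q-1$ we have $e=1$, the unipotent characters form a basic set, and by the theory of Dipper--James (the $q$-Schur algebra specialised at $q\equiv1\pmod\ell$) the decomposition numbers in the relevant blocks are governed by symmetric-group combinatorics; the explicit Deligne--Lusztig rows we need have all entries at most~$1$ and only a bounded number of non-zero positions, exactly as in the weight-$2$ analysis underlying Theorem~\ref{thm:An}. This keeps the $\GL_n(q)$-side decomposition numbers well below $\ell$ and the number of modular constituents well below $\ell^2-1$.

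The hard part will be the descent from $\GL_n(q)$ to $\SL_n(q)$. Restriction along $C_{q-1}$ can split a single character of $\GL_n(q)$ into up to $\gcd(n,q-1)$ constituents of $\SL_n(q)$, and can split the modular irreducibles as well; because $\ell\mid q-1$, the $\ell$-part of this ramification can be as large as $\ell$, so a priori both $|\IBr(\chi^0)|$ and the $d_{\chi\vhi}$ could be inflated by a factor of~$\ell$. The crux is to show that the generous slack in the $\GL_n(q)$ estimates absorbs this: with $\GL_n(q)$-entries equal to~$1$ and few non-zero positions, even after multiplying by the Clifford ramification the decomposition numbers stay at most~$\ell$ and the constituent count stays at most $\ell^2-1$. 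Making this precise requires tracking inertia groups and ramification indices against the Jordan-decomposition labels in each case, and I expect the smallest primes (where the slack is tightest, and $\ell\in\{2,3\}$ in particular) to require direct verification from the explicitly known decomposition matrices of $\SL_n(q)$.
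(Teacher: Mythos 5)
Your reduction---Proposition~\ref{prop:smallcent} plus Lemma~\ref{lem:centPGL} to confine $n$ to $\{\ell,\ell+1,\ell^2\}$ (with $n\le3$ handled by explicit matrices), then passage to $\tG=\GL_n(q)$ and the observation that $\RtTtG(\theta)$ and $\RtTtG(1)$ have the same reduction modulo~$\ell$---is precisely the paper's skeleton. But your proof stops exactly where the paper's real work happens, and the quantitative plan you sketch for that step would not close. On the $\GL_n$ side the reduction of $\psi=\pm\RtTtG(\theta)$ is the \emph{alternating} sum $\sum_{i=1}^\ell(-1)^{i-1}\chi_i^0$ over the hook unipotent characters, so entrywise information on the rows of the $\chi_i$ (entries at most~$1$, few nonzero positions) gives no bound at all on the row of $\psi$: the signs preclude termwise estimates, and what is needed are the exact linear relations among those rows. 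These come not from the weight-2/Scopes combinatorics you cite, but from cyclic defect: since $\ell$ divides $|\fS_\ell|$ exactly once, the embedded Hecke-algebra decomposition matrix is a Brauer tree in which $\chi_i+\chi_{i+1}$ is projective for $i=1,\dots,\ell-1$; the alternating sum telescopes and $\psi^0$ is a \emph{single irreducible} Brauer character. Hence each constituent $\chi\in\cE(G,t)$ of $\psi|_G$ is itself irreducible modulo~$\ell$, so $|\IBr(\chi^0)|=1$ and every decomposition number is~$1$: Conjecture~A is immediate, and the entire Clifford descent you call ``the crux'' (inertia groups, ramification up to~$\ell$, multiplying bounds) never arises. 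As you set it up, that descent is a genuine gap: with $m$ modular constituents on the $\GL_n$ side, each possibly splitting into up to $\ell$ pieces with a priori unknown Brauer--Clifford multiplicities, neither $d_{\chi\vhi}\le\ell$ nor $|\IBr(\chi^0)|\le\ell^2-1$ follows from the estimates you propose.

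Two further points. In the case $n=\ell+1$ the centraliser of a preimage of $t$ is the maximal torus $\GL_1(q^\ell)\times\GL_1(q)$ (the statement of Lemma~\ref{lem:centPGL}(2) contains a typo; its proof produces $f_1=1$, $f_2=0$): your reading of it as the Levi subgroup $\GL_\ell(q)\times\GL_1(q)$ is impossible, since that group has $\ell$-part at least $\ell^{\ell}$, violating the constraint $|C|_\ell\le\ell^{2+c}$. Thus $C_{G^*}(t)$ is a connected torus of order $q^\ell-1$, $\cE(G,t)=\{\chi\}$ is a singleton, and the same telescoping argument applies verbatim. Relatedly, your expectation that $n=\ell$ is ``the primary source'' is off: both $n=\ell$ and $n=\ell+1$ genuinely produce defect-2 characters, while $n=\ell^2$ is eliminated because the disconnected Coxeter-torus centraliser $|C_{G^*}(t)|=\ell^f(q^{\ell^f}-1)/(q-1)$ permits defect~2 only for $f=1$. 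Your caution about $\ell\in\{2,3\}$ is legitimate (Proposition~\ref{prop:smallcent} is derived for good primes $\ell\ge5$), but it does not repair the main gap above.
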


\begin{proof}
Assume that the principal $\ell$-block $B_0$ of $\SL_n(q)$ contains a character
$\chi\in\Irr(B_0)$ of $\ell$-defect~2. Then $\chi\in\cE(G,t)$ for some
$\ell$-element $t\in G^*=\PGL_n(q)$ such that $|C_{G^*}(t)|_\ell=\ell^2$ by
Proposition~\ref{prop:smallcent}. According to Lemma~\ref{lem:centPGL} then
either $n\le3$ or $n\in\{\ell,\ell+1,\ell^2\}$. For $n\le3$ the claim is easily
checked from the known decomposition matrices. We consider the remaining
cases in turn.
\par
Assume that $n=\ell^f$ with $f\in\{1,2\}$. Then $C_{\bG^*}(t)$ is a Coxeter
torus of $\PGL_n$ by Lemma~\ref{lem:centPGL}(1) and~(3), and $t$ is a regular
element. Its centraliser is disconnected, with
$|C_{G^*}(t)|=\ell^f(q^{\ell^f}-1)/(q-1)$, so the characters in these Lusztig
series can only have defect~2 if $f=1$, that is, $n=\ell$, which we assume
from now on. The $\ell$ characters in $\cE(G,t)$ are the constituents of the
restriction to $G$ of the unique character $\psi$ in $\cE(\tG,\hat t)$,
where $\tG=\GL_n(q)$. Let $\tT$ be a Coxeter torus of $\tG$. Then
$\psi=\RtTtG(\theta)$ for an $\ell$-element $\theta\in\Irr(\tT)$ in duality
with $\hat t$. The reduction of $\RtTtG(\theta)$ modulo~$\ell$ thus coincides
with the reduction of $\RtTtG(1)$. The latter decomposes as
$\sum_{i=1}^\ell(-1)^{i-1}\chi_i$, where $\chi_i\in\Irr(B_0)$ is the unipotent
character of $\tG$ parametrized by the hook partition $(i,1^{\ell-i})$.
Now the decomposition numbers of the unipotent characters in $B_0$ are known:
the $\ell$-decomposition matrix of the Iwahori--Hecke algebra of type
$A_{\ell-1}$, that is, of the symmetric group $\fS_\ell$ embeds into that
of $G$. Since $\ell$ divides $|\fS_\ell|$ just once, we obtain a Brauer tree
with $\chi_i+\chi_{i+1}$ being projective for $1=1,\ldots,\ell-1$. Adding up
we see that $\psi$ is irreducible modulo~$\ell$, hence the same is true for
$\chi\in\cE(G,t)$ and our claim is proved in this case.
\par
Assume next that $n=\ell+1$. Then by Lemma~\ref{lem:centPGL}(2) the centraliser
of $t$ is a maximal torus with $|C_{G^*}(t)|=q^\ell-1$. So again $t$ is
regular, but now with connected centraliser, hence $\cE(G,t)=\{\chi\}$ consists
of just one character. Again the principal block of $\fS_{\ell+1}$ has cyclic
defect, and a computation as in the previous case shows that $\chi$ is in
fact irreducible. The validity of Conjecture~A follows.
\end{proof}



\begin{thebibliography}{13}

\bibitem{Br}
{\sc R. Brauer}, Investigations on group characters. Ann. of Math. (2)
  {\bf 42}, (1941), 936--958.

\bibitem{BMM}
{\sc M. Brou\'e, G. Malle, J. Michel}, Generic blocks of finite reductive
  groups. Ast\'erisque {\bf212} (1993), 7--92.

\bibitem{CE94}
{\sc M. Cabanes, M. Enguehard}, On unipotent blocks and their ordinary
  characters. Invent. Math. {\bf117} (1994), 149--164.

\bibitem{Ca}
{\sc R. Carter}, \emph{Finite Groups of Lie type: Conjugacy Classes and
  Complex Characters}. Wiley, Chichester, 1985.

\bibitem{Atl}
{\sc J.H.~Conway, R.T.~Curtis, S.P.~Norton, R.A.~Parker, R.A.~Wilson},
  \emph{Atlas of Finite Groups}. Clarendon Press, Oxford, 1985.

\bibitem{Dade}
{\sc E. C. Dade}, Blocks with cyclic defect groups. Ann. of Math. (2) {\bf 84}
  (1966), 20--48.

 \bibitem{tworank}
{\sc A. D{\'{\i}}az, A. Ruiz, A. Viruel}, All $p$-local finite groups of
  rank two for odd prime $p$. Trans. Amer. Math. Soc. \textbf{359} (2007),
  1725--1764.

\bibitem{EatonMoreto}
{\sc C.~W. Eaton, A. Moret\'o}, Extending Brauer's height zero conjecture to
  blocks with nonabelian defect groups. Int. Math. Res. Not. \textbf{2014}
  (2014), 5581--5601.

\bibitem{Erdmann}
{\sc K. Erdmann}, \emph{Blocks of Tame Representation Type and Related
  Algebras}. Lecture Notes in Math., Vol. 1428, Springer--Verlag, Berlin, 1990.

\bibitem{Chv}
{\sc M.~Geck, G.~Hiss, F.~L\"ubeck, G.~Malle, and G.~Pfeiffer},
  {\sf CHEVIE} -- A system for computing and processing generic character
  tables for finite groups of Lie type, Weyl groups and Hecke algebras.
  \emph{Appl. Algebra Engrg. Comm. Comput.} {\bf7} (1996), 175--210.

\bibitem{Haggarty}
{\sc R.~J. Haggarty}, On the heights of group characters. Proc. Amer. Math.
  Soc. \textbf{63} (1977), 213--216.

\bibitem{Hu81}
{\sc J. E. Humphreys}, Ordinary and modular characters of $\SL(3,p)$.
  J. Algebra {\bf72} (1981), 8--16.

\bibitem{Hu86}
{\sc J. E. Humphreys}, Projective modules for $\Sp(4,p)$ in characteristic $p$.
  J. Algebra {\bf104} (1986), 80--88.

\bibitem{Hu90}
{\sc J. E. Humphreys}, Ordinary and modular characters of $\SU(3,p)$.
  J. Algebra {\bf131} (1990), 425--431.

\bibitem{Hupperttlg}
{\sc B. Huppert}, Zweifach transitive, aufl\"osbare Permutationsgruppen.
  Math. Z. \textbf{68} (1957), 126--150.

\bibitem{Huppert}
{\sc B. Huppert}, \emph{Endliche Gruppen. I}. Springer--Verlag, Berlin, 1967.

\bibitem{Isa73}
{\sc M. Isaacs}, Characters of solvable and symplectic groups. Amer. J. Math.
  {\bf 93} (1973), 594--635.

\bibitem{Isa}
{\sc M. Isaacs}, \emph{Character Theory of Finite Groups}. Academic Press,
  1976.

\bibitem{KM13}
{\sc R. Kessar, G. Malle}, Quasi-isolated blocks and Brauer's height zero
  conjecture. Ann. of Math. (2) {\bf178} (2013), 321--384.

\bibitem{KS04}
{\sc H. Kurzweil, B. Stellmacher}, \emph{The Theory of Finite Groups}.
  Universitext, Springer-Verlag, New York, 2004.

\bibitem{Kk4}
{\sc B. K\"ulshammer}, Symmetric local algebras and small blocks of finite
  groups. J. Algebra \textbf{88} (1984), 190--195.

\bibitem{KuelshammerRemark}
{\sc B. K\"ulshammer}, A remark on conjectures in modular representation
  theory. Arch. Math. (Basel) \textbf{49} (1987), 396--399.

\bibitem{Ma2F4}
{\sc G. Malle}, Die unipotenten Charaktere von $\tw2F_4(q^2)$. Comm. Algebra
  \textbf{18} (1990), 2361--2381.

\bibitem{MR16}
{\sc G. Malle, G. Robinson}, On the number of simple modules in a block of a
  finite group. J. Algebra, to appear, 2016.

\bibitem{MT}
{\sc G. Malle, D. Testerman}, \emph{Linear Algebraic Groups and Finite
  Groups of Lie Type}. Cambridge Studies in Advanced Mathematics, 133,
  Cambridge University Press, Cambridge, 2011.

\bibitem{MO90}
{\sc G.O. Michler, J. B. Olsson}, The Alperin--McKay conjecture holds in the
  covering groups of symmetric and alternating groups, $p\neq 2$.
  J. Reine Angew. Math. {\bf405} (1990), 78--111. 

\bibitem{N}
{\sc G. Navarro}, \emph{Blocks and Characters of Finite Groups}. Cambridge
  University Press, Cambridge, 1998.

\bibitem{Ol93}
{\sc J.~Olsson}, \emph{Combinatorics and Representations of Finite Groups.}
  Vorlesungen aus dem Fachbereich Mathematik der Universit\"at GH Essen, 20.
  Essen, 1993.

\bibitem{Reynolds}
{\sc W.~F. Reynolds}, Blocks and normal subgroups of finite groups. Nagoya
  Math. J. \textbf{22} (1963), 15--32.

\bibitem{Robinsonmetac}
{\sc G.~R. Robinson}, Large character heights, $Qd(p)$, and the ordinary
  weight conjecture. J. Algebra \textbf{319} (2008), 657--679.

\bibitem{ExtraspecialExpp}
{\sc A. Ruiz, A. Viruel}, The classification of $p$-local finite groups
  over the extraspecial group of order $p^3$ and exponent $p$. Math. Z.
  \textbf{248} (2004), 45--65.

\bibitem{Sambale}
{\sc B. Sambale}, \emph{Blocks of Finite Groups and their Invariants}. Springer
  Lecture Notes in Math., Vol. 2127, Springer-Verlag, Cham, 2014.

\bibitem{Sco95}
{\sc J. Scopes}, Symmetric group blocks of defect two. Quart. J. Math. Oxford
  Ser. (2) {\bf46} (1995), 201--234.

\bibitem{Sr64}
{\sc B. Srinivasan}, On the modular characters of the special linear group
 $\SL(2,p^n)$. Proc. London Math. Soc. (3) {\bf14} (1964), 101--114.

\bibitem{Steinberg}
{\sc R. Steinberg}, The representations of $\GL(3,q)$, $\GL(4,q)$, $\PGL(3,q)$,
  and $\PGL(4,q)$. Canadian J. Math. {\bf3} (1951), 225--235.

\bibitem{GAP}
{\sc The GAP~Group}, \emph{GAP --- Groups, Algorithms, and Programming,
  Version 4.8.4}; 2016. \verb+http:+ \verb+ //www.gap-system.org+.

\bibitem{Wi72}
{\sc D. Winter}, The automorphism group of an extraspecial $p$-group.
  Rocky Mountain J. Math {\bf 2} (1972), 159--168.

\end{thebibliography}
\end{document}